\theoremstyle{plain} \newtheorem{Thm}{Theorem}[section]
\theoremstyle{plain} \newtheorem{Cor}[Thm]{Corollary}
\theoremstyle{plain} \newtheorem{Prop}[Thm]{Proposition}
\theoremstyle{plain} \newtheorem{Lemma}[Thm]{Lemma}
\theoremstyle{plain} 
\theoremstyle{definition} \newtheorem{Def}[Thm]{Definition}
\theoremstyle{definition} \newtheorem{Rem}[Thm]{Remark}
\theoremstyle{definition} 
\theoremstyle{definition} \newtheorem{Ex}[Thm]{Example}
\theoremstyle{plain}
\numberwithin{equation}{section}
\def\frak{\mathfrak}
\renewcommand{\Re}{\mathop{\rm{Re}}}
\renewcommand{\Im}{\mathop{\rm{Im}}}
\newcommand{\sgn}{\mathop{\rm{sgn}}}
\newcommand{\id}{\mathrm{id}}
\newcommand{\Tr}{\mathop{\rm{Tr}}}
\newcommand{\diag}{\mathop{\rm{diag}}}
\newcommand{\supp}{\mathop{\rm{supp}}}
\newcommand{\Ad}{\mathop{\rm{Ad}}}
\newcommand{\ad}{\mathop{\rm{ad}}}
\newcommand{\rI}{\mathrm{I}}
\newcommand{\Ind}{{\rm{Ind}}}
\newcommand{\Sym}{\mathop{\rm{Sym}}}
\newcommand{\rank}{\mathop{\rm{rank}}}
\newcommand{\rS}{\mathrm{S}}
\newcommand{\rU}{\mathrm{U}}
\newcommand{\1}{\mathbf{1}}
\newcommand{\Gr}{\mathop{\rm{Gr}}}
\newcommand{\inner}[2]{\langle#1,#2\rangle} 
\newcommand{\innerl}[2]{\langle#1,#2\rangle_\l}
\newcommand{\herm}[2]{\langle#1,#2 \rangle}
\newcommand{\C}{\ensuremath{\mathbb C}}
\renewcommand{\H}{\ensuremath{\mathbb H}}
\newcommand{\K}{\ensuremath{\mathbb K}}
\newcommand{\R}{\ensuremath{\mathbb R}}
\newcommand{\N}{\ensuremath{\mathbb N}}
\renewcommand{\l}{\lambda}
\renewcommand{\a}{\alpha}
\renewcommand{\b}{\beta}
\newcommand{\fa}{\mathfrak{a}}
\newcommand{\amin}{\mathfrak{a}_{\text{min}}}
\newcommand{\fb}{\mathfrak{b}}
\newcommand{\fp}{\mathfrak{p}}
\newcommand{\fe}{\mathfrak{e}}
\newcommand{\ff}{\mathfrak{f}}
\newcommand{\frakg}{\mathfrak{g}}
\newcommand{\fg}{\mathfrak{g}}
\newcommand{\fh}{\mathfrak{h}}
\newcommand{\fk}{\mathfrak{k}}
\newcommand{\fl}{\mathfrak{l}}
\newcommand{\fq}{\mathfrak{q}}
\newcommand{\fm}{\mathfrak{m}}
\newcommand{\fn}{\mathfrak{n}}
\newcommand{\ft}{\mathfrak{t}}
\newcommand{\nmin}{\mathfrak{n}_{\text{min}}}
\newcommand{\fs}{\mathfrak{s}}
\newcommand{\fu}{\mathfrak{u}}
\newcommand{\fz}{\mathfrak{z}}
\newcommand{\bx}{\mathbf{x}}
\newcommand\SO{\mathrm{SO}}
\newcommand\SL{\mathop{\rm{SL}}}
\newcommand\GL{\mathop{\rm{GL}}}
\DeclareMathOperator{\SU}{SU}
\newcommand\upS{{\rm{S}}}
\newcommand\upO{{\rm{O}}}
\newcommand{\wG}{\widetilde{G}}
\newcommand{\wt}{\widetilde}
\newcommand{\wtt}{\widetilde{\tau}}
\newcommand{\wKL}{\widehat{K}_L}
\renewcommand{\phi}{\varphi}
\newcommand{\ip}[2]{\langle #1,#2 \rangle}
\newcommand{\wE}{\widehat{\mathcal{E}}}
\newcommand{\cB}{\mathcal{B}}
\newcommand{\cC}{\mathcal{C}}
\newcommand{\cD}{\mathcal{D}}
\newcommand{\cE}{\mathcal{E}}
\newcommand{\cH}{\mathcal{H}}
\newcommand{\cN}{\mathcal{N}}
\newcommand{\cO}{\mathcal{O}}
\newcommand{\cS}{\mathcal{S}}
\newcommand{\cW}{\mathcal{W}}
\def\sideremark#1{\ifvmode\leavevmode\fi\vadjust{\vbox to0pt{\vss \hbox to 0pt{\hskip\hsize\hskip1em\vbox{\hsize2cm\tiny\raggedright\pretolerance10000  \noindent #1\hfill}\hss}\vbox to8pt{\vfil}\vss}}}
\newcommand{\WH}{W_{H\cap K}}
\newcommand{\W}{W_K}
\def\LB{\Lambda^+(\cB )}
\newcommand{\so}{\mathfrak{so}}
\renewcommand{\sp}{\mathfrak{sp}}
\newcommand{\Sp}{\mathrm{Sp}} 
\renewcommand{\sl}{\mathfrak{sl}}
\newcommand{\gl}{\mathfrak{gl}}
\newcommand{\su}{\mathfrak{su}}
\newcommand{\pmax}{\mathfrak{p}_{\rm{max}}}
\newcommand{\Pmax}{P_{\rm{max}}}
\newcommand{\Pmin}{P_{\rm{min}}}
\newcommand{\bN}{\overline{N}}
\newcommand{\bP}{\overline{P}_{\rm{max}}}
\newcommand{\bn}{\overline{n}}
\newcommand{\pr}{\mathrm{pr}}
\newcommand{\Cos}{\mathrm{Cos}}
\begin{document}

\keywords{Symmetric $R$-spaces, Berezin form, reflection positivity, complementary series, highest weight representations.}
\mathclass{Primary 22E46; Secondary 43A85, 57S25.}

\abbrevauthors{J. M\"{o}llers, G. \'{O}lafsson and B. {\O}rsted}
\abbrevtitle{The Berezin form and reflection positivity}

\title{The Berezin form on symmetric $R$-spaces and reflection positivity}

\author{Jan M{\"o}llers}
\address{Department Mathematik, FAU Erlangen--N\"{u}rnberg\\
Cauerstr. 11, 91058 Erlangen, Germany\\
E-Mail: moellers@math.fau.de}

\author{Gestur {\'O}lafsson}
\address{Department of Mathematics, Louisiana State University\\
Baton Rouge, LA 70803, USA\\
E-Mail: olafsson@math.lsu.edu}

\author{Bent {\O}rsted}
\address{Institut for Matematiske Fag, Aarhus Universitet\\
Ny Munkegade 118, 8000 Aarhus C, Denmark\\
E-Mail: orsted@math.au.dk}

\maketitlebcp

\begin{abstract}
For a symmetric $R$-space $K/L=G/P$ the standard intertwining operators
provide a canonical $G$-invariant pairing between sections of line
bundles over $G/P$ and its opposite $G/\overline{P}$. Twisting this
pairing with an involution of $G$ which defines a non-compactly causal
symmetric space $G/H$ we obtain an $H$-invariant form on sections of
line bundles over $G/P$. Restricting to the open $H$-orbits in $G/P$
constructs the Berezin forms studied previously by G. van Dijk, S. C. Hille and V. F. Molchanov. We determine for which $H$-orbits in $G/P$ and for which line bundles the Berezin form is positive semidefinite, and in this case identify the corresponding representations of the dual group $G^c$ as unitary highest weight representations. We further relate this procedure of passing from representations of $G$ to representations of $G^c$ to reflection positivity.
\end{abstract}

\tableofcontents

\section*{Introduction}

\addcontentsline{toc}{section}{Introduction}

The notion of \textit{reflection positivity} appeared first as one of the Osterwalder--Schrader axioms in constructive quantum
field theory, see \cite{OS73,OS75}. In this connection it can be viewed as a tool to transform a quantum mechanical system to a quantum field theoretical
system via the Osterwalder--Schrader quantization process. From the point of view of representation theory, it is a receipt to construct from a representation of
the Euclidean motion group a unitary representation of the Lorentz group. In this form, reflection positivity
can be formulated more generally as transforming a representation of one Lie group $G$ to a unitary
representation of another Lie group $G^c$. Here the groups $G$ and $G^c$ are
connected via {\it Cartan duality}. For that let $G$ be a, say connected, Lie group
with an involution $\tau : G\to G$. The involution gives rise to an involution
$\tau : \fg \to \fg$ on the Lie algebra by differentiation. The Lie algebra $\fg$ then decomposes as $\fg=\fh\oplus \fq$ where
$$ \fh =\{X\in\fg\mid \tau (X)=X\} \qquad \mbox{and} \qquad \fq=\{X\in\fg\mid \tau (X)=-X\}. $$
The commutation relations $[\fh,\fh],[\fq,\fq]\subseteq \fh$ and $[\fh,\fq]\subseteq \fq$ imply that $\fg^c:=\fh\oplus i\fq$
is also a Lie algebra. Note that both $\fg$ and $\fg^c$ are real forms of the same complex Lie algebra $\fg_\C$.
One then defines $G^c$ to be a connected Lie group with Lie algebra $\fg^c$.

The first articles to address this idea were \cite{LM75,J86,J87,S86}. The first three of these papers deal
with the problem of integrating an infinitesimally unitary representation of $\fg^c$ to a unitary representation
of $G^c$. Subsequently, R. Schrader~\cite{S86} used this idea for the first time in the context of simple Lie groups. More precisely, he applies
reflection positivity and the integration results from \cite{LM75} to a degenerate spherical principal series representation of
$G=\SL(2n,\C)$. This constructs a unitary representation of the dual group $G^c=\SU(n,n)\times\SU(n,n)$, but he does not identify this representation. That question was taken up in \cite{JO98,JO00,O00} where Schrader's idea was further generalized to all simple groups such
that $G^c$ is semisimple and of Hermitian type. Special attention was paid to simple groups such that the corresponding
bounded symmetric domain is of tube type $\R^p+i\Omega$ and $H=G^\tau$ is locally isomorphic to the automorphism group of
the open symmetric cone $\Omega\subseteq\R^p$. It was shown that if one starts with a degenerate principal
series representation of $G$, then the process of reflection positivity results in an irreducible highest weight
representation of $G^c$. The authors were not aware of the fact that much earlier T. Enright had discussed in
\cite{E83} a method to transform a degenerate principal series representation of $G=G'_\C$ to a
highest weight representation of $G^c=G'\times G'$, a special case of the above setting. But Enright's methods
are algebraic in nature and not related to the idea of reflection positivity.

In \cite{O00}, and to some extent also in \cite{JO98,JO00}, it was pointed out that the ideas of applying
reflection positivity to the representation theory of semisimple groups are closely related to several other ideas that were floating around at the
same time, in particular the connection to the Segal--Bargman transform \cite{OO96} and the
Berezin transform and canonical representations developed by G. van Dijk, S. C. Hille and others,
see \cite{B75,vDH97a, vDH97b,vDM98,vDM99,vDP99,FP05,H99}. This connection is one of the
main topics in this article. Here we review previous results and complete
the picture by giving a full answer to the positivity question for the Berezin form.\\

We start by recalling some basic facts about the types of symmetric spaces that are of importance for this article (see Section~\ref{Se:Rspaces}). More precisely, we discuss non-compactly causal symmetric spaces
(see \cite{HO97}), symmetric $R$-spaces (see \cite{HS97,N65,T79,T87}), and bounded symmetric domains (see \cite{KW65a,KW65b,W72}). This discussion includes the maximal parabolic subgroups $P_{\text{max}}=MAN$ of $G$ that will play an important role in the rest of the article. In our situation $N$ is abelian, $MA=G^{\theta\tau}$ is the centralizer of an element $X_0\in \fg$ with the property that the Lie algebra of $N$ is the $+1$-eigenspace of $\ad X_0$, and $A=\exp (\R X_0)$. Here $\theta$ is a Cartan involution commuting with $\tau$. The (generalized) flag manifold $\cB=G/\Pmax$ is called a symmetric $R$-space.

In Section \ref{Se:Principal} we recall the construction of the spherical degenerate principal series representations $\pi_\lambda$ of $G$, induced from the maximal parabolic $\Pmax$, and the associated standard intertwining operators $J(\lambda)$. There are various ways to realize the representations $\pi_\lambda$. For our purpose the two canonical ways are to either realize $\pi_\lambda$ as acting on $L^2(\cB)$ or on a weighted $L^2$-space on $\bN$. For practical purposes it is more convenient to consider smooth induction which realizes $\pi_\lambda$ as a representation on $C^\infty (\cB)$ or a subspace of $C^\infty (\bN )$. In particular, this is necessary when considering the meromorphic extension of the intertwining operators as a function $\lambda \mapsto J(\lambda)$. We finish this section by recalling from the literature the interval where the representations $\pi_\lambda$ give rise to irreducible unitary representations, the \textit{degenerate complementary series representations}. The material 
in this section is mostly standard, and for the special case where $\cB$ is a Grassmannian the corresponding results can be found in \cite{OP12}. In fact, this example serves as an illustration throughout the whole paper. Note that in \cite{OP12} some additional results are obtained that we do not mention here. This includes the
calculation of the eigenvalues of $J(\lambda )$ on each of the $K$-types using the \textit{spectrum generating operator} from \cite{BOO96}, which was generalized to all symmetric $R$-spaces in \cite{MS14}. The statement, however, splits into various cases, so we refer the reader to \cite{OP12,MS14} for details.

We introduce the \textit{Berezin kernel}, the \textit{Berezin transform} and the associated \textit{Berezin form} $\ip{\cdot }{\cdot }_\lambda$ in Section \ref{Se:Berezin}, following the idea of Hille \cite{H99}. 
In Proposition \ref{prop 4.3} we show that
\[\ip{\pi_\lambda (g)f}{h}_\lambda = \ip{f}{\pi_{\bar \lambda}(\tau (g^{-1}))h}_\lambda\, .\]
In particular, if $\lambda$ is real then the Berezin
form is $H$-invariant and hence, assuming its positivity, defines a
unitary representation of $H$. This is
the \textit{canonical representation}. A second important result in this section is Lemma \ref{le:4.4} where we express the Berezin form in the $\bN$-realization. More precisely, for compactly supported functions $f$ and $h$ on $\bN$ we show that
\[\ip{f}{h}_\lambda =\int_{\bN\times \bN} \kappa_\lambda (x,y)f_\lambda (x)\overline{h_\lambda (y)}\, dx\,dy\]
where the kernel $\kappa_\lambda$ is explicitly given by the $A$ projection in the triangular decomposition $\bN MA N\subseteq G$, and $f_\lambda$ resp. $h_\lambda$ is given by multiplying $f$ resp. $h$, by
a certain positive function depending on $\lambda$. This is a fundamental expression as we move on to reflection positivity where one needs to determine where $\kappa_\lambda$, or rather its twisted version $\kappa_\lambda\circ (\tau\times\id)$, is positive definite. The results is also needed for identifying the resulting representation $\pi_\lambda^c$ of the dual group $G^c$.

Section \ref{se:openOrb} is devoted to a description of the open $H$-orbits in $\cB$. This is done in 
Theorem \ref{th:double} where we express the open orbits using a maximal set of 
strongly orthogonal roots related to a minimal parabolic subgroup. In particular, the number of open orbits is equal to $r+1$, where $r=\rank(H/K\cap H)$. We then show in Proposition \ref{pro:5.5} that the
open orbits are symmetric spaces $\cO_j=H/H^{\sigma_j}$ where $\sigma_j$ is an explicitly given involution ($0\leq j\leq r$).
The orbit $\cO_0$ through the base point $b_0:=eP_{\text{max}}$ is $H/(H\cap K)$ which is a Riemannian symmetric space. Most of the other orbits are non-Riemannian.

We give the basic definitions related to \textit{reflection positivity} in Section \ref{sec:ReflectionPositivity}. For this consider the smooth representation $\pi_\lambda$ of $G$ on $\cE =C^\infty (\cB)$. For each of the
open orbits let $\cE_{j,+}=C^\infty_c(\cO_j)$. Then the Berezin form $\ip{\cdot}{\cdot}_\lambda$ is
non-negative on $\cE_{j,+}$ if and only if the restriction of the Berezin
kernel $\kappa_\lambda$ is positive definite on $\cO_j\times \cO_j$. 
In this case we let $\cN_j$ be the radical of $\ip{\cdot }{\cdot}_\lambda$ restricted to $\cE_{j,+}\times \cE_{j,+}$ and
let $\widehat{\cE}_j$ be to completion of $\cE_{j,+}/\cN_j$. Then $\wE_j$ is a Hilbert space which carries a unitary representation of $H$ and an infinitesimally unitary representation of the dual Lie algebra $\fg^c$ which one wants to integrate to $G^c$ (or a covering of $G^c$).

This is then applied to our situation in following two sections. In Section \ref{sec:HighestWeightRepresentations} we
discuss the Riemannian symmetric orbits $\cO_0=H\cdot b_0$ and, in case that $G^c$ is of tube type, also the
conjugate orbit $\cO_r$. We start by recalling the notion of \textit{(unitary) highest weight representations} $(\rho_\mu,\cH_\mu^c)$.
We show that the kernel $\kappa_\lambda$ is the restriction of the reproducing kernel $K_\mu$ of the unitary representation $\rho_\mu$, where $\mu$ and $\lambda$ are related by $\lambda = -\mu+\rho$. 
Furthermore, it is well known that the orbit $\cO_0$ is a totally real submanifold of the complex manifold $G^c/K^c$ with $K^c\subseteq G^c$ being a maximal compact subgroup. In fact, $\tau$ defines a complex conjugation on $G^c/K^c$ with fixed point set $\cO_0$.
It follows that $\kappa_\lambda$ restricted to $\cO_0\times \cO_0$ is positive definite if and only if the highest weight
representation $\rho_\mu$ of $G^c$ is unitary. This
result, which is stated as Proposition \ref{prop7.3}, gives a complete answer to the positivity question for the Riemannian orbits as all other orbits are non-Riemannian, see also \cite{JO98,JO00}. In Section \ref{se:7.3} we further show that
\[T_\mu f (z):=\int_{\cO_0} K_\mu (z,x)f(x)\, dx\, ,\quad f\in \cO_c(\cO_0)\]
defines an isometry $\cE_{0,+}/\cN \to \cH_\mu$ which then extends to an
unitary isomorphism $\widehat{\cE}_0\to\cH_\mu^c$ intertwining $\pi_\lambda^c$ and $\rho_\mu$. Similarly, we construct in Section \ref{se:7.4} a unitary intertwining operator from $\widehat{\cE}_0$ into the holomorphic discrete series of the symmetric space $G^c/\widetilde{H}$.

Reflection positivity related to the Riemannian open orbit $\cO_0$ has been observed earlier, but the non-Riemannian orbits have not been treated so far. This we do in Theorem \ref{thm:8.6} where we show that for all of those orbits
the Berezin kernel is \textit{not} positive, unless it is trivial (and the process of reflection positivity constructs the trivial representation of $G^c$). This is accomplished by a rank two reduction using the pairs $(\sl(3,\R),\so(1,2))$ and $(\sp(2,\R),\gl(2,\R))$, see Lemma \ref{lem:SubalgebrasSL3Sp2} which is in fact interesting and useful in itself.

Finally, in Section~\ref{sec:HLS} we discuss a recent application of reflection positivity for the special case $G=\SO(n+1,1)$, namely a new proof of the sharp Hardy--Littlewood--Sobolev inequality by R. Frank and E. Lieb~\cite{FL10}. Since the proof uses special cases of a few statements that hold in the more general context of symmetric $R$-spaces, one may wonder whether it can be modified to establish a theory of sharp Hardy--Littlewood--Sobolev inequalities in this more general setting.

\subsection*{Acknowledgements} The authors would like to thank A. Pasquale for helpful discussions during the initial stage of this project.

\section{Symmetric $R$-spaces, non-compactly causal symmetric spaces, and bounded symmetric domains}\label{Se:Rspaces}

In this section we recall some basic facts about symmetric spaces, in particular the notion of non-compactly causal symmetric
spaces and symmetric $R$-spaces. Our standard references are \cite{H78,HO97} for non-compactly causal symmetric spaces, \cite{HS97,K00,Lo77,N65,T79,T87} for symmetric $R$-spaces, and \cite{KW65a,KW65b,W72} for bounded symmetric domains.
 
\subsection{Non-compactly causal symmetric spaces}\label{sec:NCCSymmetricSpaces}

Let $G$ be a connected non-compact semisimple Lie group with Lie algebra $\mathfrak g$. We assume that
$G$ is contained in a connected complex Lie group $G_\C$ with Lie algebra $\fg_\C=\fg\otimes_\R \C$. Then
the center of $G$ is finite.
For any closed subgroup $S\subseteq G$ with Lie algebra $\fs$ we denote by $S_\C$ the complex subgroup of
$G_\C $ generated by $S$ and $\exp(\fs_\C)$. Then the Lie algebra of $S_\C$ is  $\fs_\C$.

Let $\theta$ be a Cartan involution on $G$
and $\frakg=\fk \oplus \fp$ the corresponding Cartan decomposition of $\frakg$. Here, and in the following, 
if $\sigma$ is an automorphism of $G$, we denote by the same symbol $\sigma$ the derived automorphism of $\frakg$. 
Set $G^\sigma=\{a\in G\mid \sigma(a)=a\}$ and 
$\frakg^\sigma=\{X\in \frakg\mid \sigma (X)=X\}$.
Let $K=G^\theta$.  Then $K$ is connected, has Lie algebra $\fk$, and is a maximal compact subgroup of $G$. 
Let $\tau$ be a nontrivial involution of $G$ which commutes with $\theta$. We say that $(G,\tau )$ and $(\fg,\tau)$ are
\textit{symmetric pairs}. Let $\frakg=\fh\oplus \fq$ be the eigenspace decomposition of $\frakg$ with respect
to the derived involution $\tau$, then 
\[\frak g=\fk \cap \fh \oplus \fk\cap \fq \oplus \fp \cap \fh \oplus \fp \cap  \fq\, .\] 
If $H$ is an open subgroup of $G^\tau$, then $G/H$ is said to be an \textit{(affine) symmetric space}. Let $L=K\cap H$ and $\fl=\fk\cap\fh$, then $L$ is a maximal compact subgroup of $H$. Symmetric
pairs and spaces always come in pairs $(\fg,\tau )$ and $(\fg, \tau\theta )$, and we abbreviate $\wtt=\tau\theta$.
We put
$$ \fg_0:=\fg^{\wtt}=\fl \oplus \fp\cap \fq \qquad \mbox{and} \qquad G_0:=L\exp(\fp\cap \fq), $$
then $G/G_0$ is also an affine symmetric space.

The symmetric space $G/H$ is said to be irreducible if $\{0\}$ and $\frakg$ are the only $\tau$-invariant ideals of $\fg$. We will
always assume that $G/H$ is irreducible. In that case either $G$ is simple or of the
form $G=G'\times G'$ with $\tau (a, b) = (b,a)$, $\wtt (a, b )=(\theta_1(b), \theta_1(a))$  and $H=\{(a , a)
\mid a\in G'\}\simeq G'\simeq G_0$. In this case $G/H\simeq G'$ via the map 
$(a, b)H\mapsto ab^{-1}$ and the action of $G$ on $G/H$ is transformed into the left-right action of $G'\times G'$ on $G'$: $(a , b)\cdot x=axb^{-1}$.
 
We recall that an element $X \in \frakg$ is called \textit{hyperbolic} if the operator 
$\ad(X): Y\mapsto [X,Y]$ on $\frakg$ is semisimple with real eigenvalues. A subset of $\frakg$ is said to be \textit{hyperbolic} if it consists
of hyperbolic elements. An irreducible symmetric space $G/H$ is said to be \textit{non-compactly causal} if there exists a
non-empty open hyperbolic $H$-invariant convex cone $C \subset \fq$ containing no affine line. This is equivalent to the existence of a
non-zero hyperbolic element
$$ X_0\in (\fp\cap \fq)^L=\{Y\in\fp\cap \fq\mid (\forall k\in L)\,\, \Ad (k)Y=Y\}. $$

\begin{Rem}
We note that being a non-compactly causal symmetric space does not only depend on the infinitesimal data $(\fg,\fh)$, it might also depend on $H/H_0$, where $H_0$ denotes the connected component containing the identity. Assume that $G$ is simple, $\tau$ an involution on $G$  commuting with  $\theta$ and assume that the Cartan
involution is an inner automorphism. Let $G_1=\Ad (G)\subset \GL (\fg)$ and let $H_1$ be the connected subgroup with Lie algebra $\fh$. The
involution $\tau$ defines an involution on $G_1$ that we denote by $\tau_1$. It is given by
$\tau_1(a)=\tau\circ a \circ \tau$. It is clear that $\theta\in G_1^{\tau_1}$. Hence, even if $G_1/H_1$ is non-compactly causal, the space $G_1/G_1^{\tau_1}$ can
never be non-compactly causal. A typical example is $(\SO (1,2),\SO(1,1))$ and more generally  Cayley type symmetric spaces, see bellow for the definition. We note that
in this case $\tau $ and $\wtt$ are conjugate. So in particular $H$ and $G_0$ are conjugate and hence isomorphic.
\end{Rem}

In the following we assume that $G/(G^\tau)_0$ is an irreducible non-compactly causal symmetric space. We now choose $(G^\tau)_0\subseteq H\subseteq G^\tau$ maximal such that $G/H$ still is non-compactly causal. Fix a non-zero hyperbolic element $X_0\in(\fp\cap \fq)^{\fk\cap\fh}$ and let
$L=Z_K(X_0)$ and $H=L (G^\tau)_0$. At this point it is not yet clear that $H$ is a group, but this will follow later.

We now recall some structure theory for non-compactly causal symmetric spaces from \cite{HO97}, to which we refer the interested reader for details. We can, and always will, normalize $X_0$ so that $\ad(X_0)$ has eigenvalues $0,1$ and $-1$.
Let $\frakg_0$, $\frakg_1$ and $\frakg_{-1}$ denote the corresponding eigenspaces of $\ad(X_0)$ in $\frakg$.
Then
$$ \frakg =\frakg_{-1} \oplus \frakg_0 \oplus \frakg_1 $$
defines a $3$-grading of $\frakg$.
Note that $\frakg_0=\fk\cap\fh\oplus\fp\cap\fq=\frakg^{\wtt}$, and hence the definition of $\fg_0$ agrees with the previous one. Furthermore,
$\ad X_0 : \fk\cap \fq\to \fp\cap \fh$ is a linear isomorphism with inverse $\ad X_0|_{\fp\cap \fh}$. It
follows in particular that $\fk_\C \simeq \fh_\C$ as $L$-modules. We will see in a moment, that those Lie algebras are in fact conjugate.

It follows from the definition that $\wtt (X_0)=X_0$. Thus $\wtt $ defines by restriction an involution on $\fg_{1}$ and $\fg_{-1}$ with possible eigenvalues $\pm1$. Assume that $X\in\fg_{\pm1}$ with $\wtt (X)=X$, then $X+\theta (X)=X+\tau (X)\in \fk\cap \fh\subseteq\fg_0$. In particular $0=[X_0,X+\theta (X)]= X-\theta (X)$. Thus $X=0$ and we have shown that
$\theta|_{\fg_{\pm1}}= -\tau|_{\fg_{\pm1}}$. It also follows that
$$ \fk\cap \fq=\{X+\theta (X)\mid
X\in \fg_1\} \qquad \mbox{and} \qquad \fh\cap \fp = \{X-\theta (X)\mid X\in \fg_1\}. $$

Set 
\begin{equation}\label{eq:psi-wttau}
\psi:=\Ad\left(\exp \frac{i\pi}{2}X_0\right)=\exp\left(\frac{i\pi}{2} \ad X_0\right),
\end{equation}
then
\[\psi|_{\fg_0}=\id\, \quad \psi |_{\fg_{\pm 1}}=\pm i\,\id_{\fg_{\pm1}}\, .\]
It follows that $\psi^2 =\wtt$, in particular we have $L\subset G^{\wtt}$.
As $L\subset K$ and $\tau =\theta\wtt$ we obtain $H \subseteq G^\tau$. This shows
that $L$ normalizes $\fh$ and hence $L$ normalizes $(G^\tau)_0=H_0$. It now follows that $H$ is in fact a group and $G/H$ a non-compactly causal symmetric space. We also note that $G^\tau/H=K^\tau/L$. As
$\dim(\fp\cap\fq)^L=1$ and $L$ normalizes $K^\tau$ it follows that for $k\in K^\tau$ we have $\Ad (k ) X_0=\pm X_0$. Hence, 
either $H=G^\tau$ or $G^\tau/H$ is a two element group. If $G$ is the adjoint group then the latter case occurs if and only if the Cartan involution is inner. 

Finally, we note that  $\psi : K_\C \to H_\C$  is an analytic isomorphism inducing an
isomorphism $K_\C /L_\C\to H_\C /L_\C$.

Let $\pmax=\fg_0\oplus \fg_{1}$, then $\pmax$ is a maximal parabolic subalgebra with corresponding maximal 
parabolic subgroup $\Pmax=N_G(\pmax )=G_0\exp(\fg_1)$. We have that
\[\cB := G/\Pmax \simeq K/L\]
is a compact symmetric space. Let $b_0=e\Pmax\in \cB$ denote the base point. 

\begin{Lemma}[{see \cite[Lemma 5.1.1]{HO97}}]\label{lem:InvolutionsOnPmax}
We have
\begin{align}
\theta(\Pmax)&=\tau(\Pmax)=G_0 \exp(\frakg_{-1})\\
\wt\tau(\Pmax)&=\Pmax\,.
\end{align}
Moreover, in the Langlands decomposition $\Pmax=MAN$ of $\Pmax$ we have $A=\exp(\R X_0)$, $MA=G_0$ and $N=\exp\frakg_1$. 
Furthermore, $\Pmax \cap H=K \cap H=L$.
\end{Lemma}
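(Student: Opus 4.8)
The plan is to extract everything from the $3$-grading $\fg=\fg_{-1}\oplus\fg_0\oplus\fg_1$ together with the descriptions already recorded above, namely $\Pmax=G_0\exp(\fg_1)=N_G(\pmax)$, $G_0=L\exp(\fp\cap\fq)$, and $\cB=G/\Pmax\simeq K/L$, and the elementary identity $\sigma(\exp X)=\exp(\sigma X)$ for any automorphism $\sigma$. First I would settle the three automorphism identities. Since $X_0\in\fp\cap\fq$ we have $\theta(X_0)=\tau(X_0)=-X_0$ and $\wtt(X_0)=\theta\tau(X_0)=X_0$; from $\sigma([X_0,Y])=[\sigma X_0,\sigma Y]$ it follows that $\theta$ and $\tau$ each carry the $\l$-eigenspace $\fg_\l$ of $\ad X_0$ onto $\fg_{-\l}$, while $\wtt$ fixes every $\fg_\l$. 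Applied to $\pmax=\fg_0\oplus\fg_1$ this gives $\theta(\pmax)=\tau(\pmax)=\fg_0\oplus\fg_{-1}$ and $\wtt(\pmax)=\pmax$. To pass to the group level I would use $\Pmax=G_0\exp(\fg_1)$ together with the observation that each of $\theta,\tau,\wtt$ preserves $G_0=L\exp(\fp\cap\fq)$: each fixes $L=K\cap H=Z_K(X_0)$ (since $L\subseteq K$ and $L\subseteq H\subseteq G^\tau$), and each maps $\fp\cap\fq$ onto $\pm(\fp\cap\fq)=\fp\cap\fq$. Hence $\theta(\Pmax)=\theta(G_0)\exp(\theta\fg_1)=G_0\exp(\fg_{-1})$, likewise $\tau(\Pmax)=G_0\exp(\fg_{-1})$, while $\wtt(\Pmax)=G_0\exp(\fg_1)=\Pmax$.

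For the Langlands decomposition the point is that $\fg_1$ is abelian, since $[\fg_1,\fg_1]\subseteq\fg_2=0$; thus $N=\exp\fg_1$ is the unipotent radical and $G_0$ is the Levi factor $MA$. The split part $\fa$ is the $\theta$-split part of the center of the Levi $\fg_0=Z_\fg(X_0)$, and it equals $\R X_0$: it is one-dimensional because $X_0$ spans $(\fp\cap\fq)^L$ with $\dim(\fp\cap\fq)^L=1$, and $Z(\fg_0)\cap\fp\cap\fh=0$ follows from $\fp\cap\fh=\{X-\theta X:X\in\fg_1\}$ meeting $\fg_0=\ker(\ad X_0)$ only in $0$. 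Hence $A=\exp(\R X_0)$, a copy of $\R$ because $\exp$ is injective on $\fp$, and $MA=Z_G(\fa)=Z_G(X_0)=G_0$.

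Finally, for $\Pmax\cap H=K\cap H=L$, the inclusion $L\subseteq\Pmax\cap H$ is immediate since $L\subseteq G_0\subseteq\Pmax$ and $L=K\cap H\subseteq H$. For the reverse, the identity just proved shows $\tau(\Pmax)=G_0\exp(\fg_{-1})=\bP$ is the parabolic opposite to $\Pmax$, so any $g\in\Pmax\cap H$ satisfies $g=\tau(g)\in\tau(\Pmax)$ (using $H\subseteq G^\tau$) and therefore lies in $\Pmax\cap\bP=MA=G_0$, invoking the standard fact that two opposite parabolics meet exactly in their common Levi subgroup. It then remains to see $G_0\cap H=L$: on the explicit decomposition $G_0=L\exp(\fp\cap\fq)$ the automorphisms $\tau$ and $\theta$ coincide, being trivial on $L$ and both sending $\exp(Y)\mapsto\exp(-Y)$ for $Y\in\fp\cap\fq$, so $G_0^\tau=G_0^\theta=G_0\cap K=L$; since $H\subseteq G^\tau$ this gives $G_0\cap H\subseteq L$. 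As a consistency check, $\Pmax\cap K=L$ is nothing but the identification $\cB=G/\Pmax\simeq K/L$ already recorded.

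I expect the only genuinely non-formal ingredients to be the identification of $\tau(\Pmax)$ with the opposite parabolic $\bP$ and the structure-theoretic fact $\Pmax\cap\bP=MA$; the rest is bookkeeping with the grading. One subtlety worth care is that $L$, hence $G_0$, may be disconnected, so one cannot infer $\tau|_{G_0}=\theta|_{G_0}$ merely from their agreement on $\fg_0$; working directly with the polar-type decomposition $G_0=L\exp(\fp\cap\fq)$ avoids this gap.
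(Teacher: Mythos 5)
Most of your argument is sound, and since the paper offers no proof of this lemma at all (it simply refers to \cite[Lemma 5.1.1]{HO97}), a self-contained structure-theoretic verification of this kind is a legitimate route: the eigenspace bookkeeping for $\theta,\tau,\wtt$ on the grading, the passage to the group level via $\Pmax=G_0\exp(\fg_1)$ and $G_0=L\exp(\fp\cap\fq)$, the identifications $N=\exp\fg_1$, $A=\exp(\R X_0)$, $MA=G_0$, and the chain $\Pmax\cap H\subseteq\Pmax\cap\tau(\Pmax)\cap H=G_0\cap H=L$ are all correct, modulo standard facts you rightly invoke ($MA=Z_G(\fa)$, $P\cap\theta(P)=MA$, injectivity of the Cartan decomposition).

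There is, however, one genuine gap: the claim $K\cap H=L$. At the point where the lemma is stated the paper has \emph{redefined} $L:=Z_K(X_0)$ and $H:=L(G^\tau)_0$, so $K\cap H=L$ is an assertion, not a definition --- it is exactly the statement reconciling this $L$ with the earlier general-setup convention $L=K\cap H$. You use ``$L=K\cap H$'' as given (e.g.\ when writing $L=K\cap H\subseteq H$), which makes this part circular, and nothing in your argument supplies the nontrivial inclusion $K\cap H\subseteq L$: your computations give $G_0\cap H=L$ and $\Pmax\cap H=L$, but $K\not\subseteq\Pmax$, so $K\cap H$ is not covered. The missing step can be filled as follows: for $k\in K\cap H$ write $k=lg$ with $l\in L$, $g\in(G^\tau)_0$, so $g\in K\cap(G^\tau)_0$; since $(G^\tau)_0$ is a connected, closed, $\theta$-stable subgroup with reductive Lie algebra, its Cartan decomposition $(G^\tau)_0=(K\cap(G^\tau)_0)\exp(\fp\cap\fh)$ shows that $K\cap(G^\tau)_0$ is connected with Lie algebra $\fk\cap\fh$, and $\fk\cap\fh$ centralizes $X_0$ because $X_0\in(\fp\cap\fq)^{\fk\cap\fh}$; hence $K\cap(G^\tau)_0\subseteq Z_K(X_0)=L$ and $k\in L$. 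The same device tidies two spots you gloss over: $Z_G(X_0)=Z_K(X_0)\exp(\fz_\fp(X_0))=L\exp(\fp\cap\fq)=G_0$, which is what $MA=Z_G(\fa)=G_0$ really rests on; and the one-dimensionality of $\fa$, which is cleanest from maximality of $\pmax$ (or from $\fa\subseteq(\fp\cap\fq)^{L}$, using that $L\subseteq K\cap\Pmax=K\cap M$ centralizes $A$), since centralization by $\fg_0$ alone only yields invariance under the identity component of $L$.
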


We will use the notation $\fn=\fg_{1}$, $\overline\fn=\fg_{-1}$, $N=\exp(\fn)$, $\overline N=\theta N =\exp \overline\fn$ and $\bP=\theta\Pmax=MA\bN$. Note that $\bN \Pmax$ is open and dense in $G$ and that
\[\bN \to \cB, \quad \overline n\mapsto \overline n\cdot x_0\]
is a diffeomorphism onto an open dense set. More precisely, the map
$$ \bN\times M\times A\times N\to G, \quad (\bn,m,a,n)\mapsto\bn man $$
is a diffeomorphism onto an open dense subset of $G$, and we write for $g\in \bN \Pmax$:
\[g=\overline \nu (g)\mu (g)\alpha (g)\nu (g) \in \bN MAN. \]
Then the almost everywhere defined action of $G$ on $\bN\simeq\bN\cdot b_0\subseteq\cB$ is given by
$$ g\cdot\overline{n} = \overline{\nu}(g\overline{n}), \qquad g\in G,\overline{n}\in\overline{N}. $$

For $g\in G$ we further write
$$g=k(g)m(g)a(g)n(g)\in KMAN=G. $$
Then $a(g)$ and $n(g)$ are well-defined analytic functions of $g$, but $k(g)$ and $m(g)$ are only defined modulo $L$. However, the map
$$ G\times\cB\to\cB, \quad (g,b)\mapsto g\cdot b:=k(g)b $$
is well-defined and equal to the left-action of $G$ on $\cB=K/L\simeq G/\Pmax$.

\subsection{Symmetric $R$-spaces}

Symmetric $R$-spaces are compact symmetric spaces admitting a non-compact group of transformations. In short, we will call an irreducible compact connected symmetric space $K/L$ a \textit{symmetric $R$-space} if there exists a non-compact simple Lie group $G$ acting transitively on
$K/L$ such that $K/L=G/\Pmax$ with $\Pmax=MAN$ a maximal parabolic subgroup with abelian nilradical $N$. As $\Pmax$ is maximal
it follows that $\fa$, the Lie algebra of $A$, is one dimensional. Further, since the Lie algebra $\fn$ of $N$ is abelian, there exists $X_0\in \fa$ such that $\fn$ is the eigenspace of $\ad X_0$ with eigenvalue $+1$. As $\overline \fn=\theta \fn$ it follows that
 $\fg=\fg_{-1} \oplus \fg_0 \oplus \fg_{+1}$ is $3$-graded with $\fg_{-1}= \overline\fn$, $\fg_0 = \fm\oplus \fa$, and $\fg_{1}
 =\fn$. 

On the other hand, if $\fg = \fg_{-1}\oplus \fg_0\oplus \fg_1$ is a simple 3-graded Lie algebra, then
$[\fg_0,\fg_0], [\fg_{-1},\fg_1]\subseteq \fg_0$ and $[\fg_0,\fg_{\pm 1}]\subseteq \fg_{\pm 1} $. Furthermore
$[\fg_1,\fg_1]\subset \fg_2=\{0\}$ and similarly it follows that $\fg_{-1}$ is abelian. This in particular implies that
\[[\fg_{-1}\oplus \fg_1,\fg_{-1}\oplus \fg_1]=[\fg_1,\fg_{-1}]\subseteq \fg_0\, .\]
It follows that $(\fg,\wtt)$ is a symmetric pair where the involution $\wtt $ is given by
\[\wtt|_{\fg_0}=\id\quad\text{ and }\quad \wtt|_{\fg_{1}\oplus \fg_{-1}}=-\id\, .\]
As $\fg_{\pm 1}$ are $\fg_0$-invariant it follows from \cite[Lem. 1.3.4]{HO97} that there exists a hyperbolic element $X_0\in \fg_0$, which one can
assume to be in $\fg_0\cap \fp$, such that
$\ad X_0$ has eigenvalues $0,1,-1$ and
\[\fg_0=\fg(\ad X_0,0),\quad \text{and}\quad \fg_{\pm 1} =\fg(\ad X_0,\pm 1)\, .\]
Then, as we observed in Section~\ref{sec:NCCSymmetricSpaces}, $\wtt=\exp(i \pi \ad (X_0))$. Furthermore, with $\tau =\wtt\theta$, the symmetric space $G/(G^\tau)_0$ is non-compactly causal. Thus, the irreducible 
non-compactly causal symmetric spaces $G/H$ with $H$ connected are in one-to-one correspondence to the irreducible symmetric $R$-spaces $G/\Pmax$, or equivalently the $3$-graded simple Lie algebras $\fg$.

We note that the symmetric spaces $G/G_0$ are the simple  \textit{parahermitian symmetric spaces}, see \cite{K85,KA88}.

\subsection{Bounded domains}\label{sec:BoundedDomain}

Given a semisimple symmetric pair $(\fg,\tau)$ as in Section~\ref{sec:NCCSymmetricSpaces} one can construct a new semisimple
symmetric pair $(\fg^c,\tau)$, by defining $\fg^c:=\fh\oplus i\fq\subseteq\fg_\C$ and denoting by $\tau$ also the complex linear extension of $\tau$ to $\fg_\C$ as well as its restriction to $\fg^c$. This process is called \textit{c-duality}. Note that $(\fg^c)^\tau=\fh$ and $\wtt|_{\fg^c}$ is a Cartan involution of $\fg^c$. The corresponding maximal compact subalgebra $\fk^c$ of $\fg^c$ is given by $\fk^c=\fk\cap \fh\oplus i(\fp\cap \fq)$, and the Cartan decomposition of $\fg^c$ is $\fg^c=\fk^c\oplus\fp^c$ with $\fp^c=\fp\cap \fh \oplus i(\fk\cap \fq)$. In particular,
$c$-duality interchanges the elliptic and hyperbolic directions and $\fk^c_\C=\fg_{0,\C}$. The element $Z_0=iX_0$ is a central element in the maximal compact subalgebra $\fk^c$, and the eigenvalues of $\ad Z_0$ are $0$ with eigenspace $\fk^c$ and $\pm i$ with (complexified) eigenspaces $\fp^c_{\pm}=\fg_{\pm1,\C}$. 

We denote by $G^c$ the analytic subgroup in $G_\C$ with Lie algebra $\fg^c$ and by $\wG^c$ its universal covering group. We see that $G^c/K^c$ is a bounded symmetric domain which can be realized as an open $G^c$ orbit in $G_\C/K^c_\C P^c_+=G_\C/P_{\rm{max},\C}$, where $P^c_\pm=\exp(\fp^c_\pm)\subseteq G_\C$. 
The involutions $\tau$ and $\tilde \tau$ extends to holomorphic involutions on $G_\C$ and then by restriction to involutions on $G^c$. Both
involutions leave $K^c$ invariant and hence define involutions on $G^c/K^c$.  We use the same notation for these involutions, and it will be clear from the context on which spaces these involutions act.
As the complex structure on $G^c/K^c$ is given by $\ad Z_0$ and $\tau Z_0=-Z_0$ it follows that $\tau : G^c/K^c\to G^c/K^c$ is an antiholomorphic involution. In particular $H\cdot x_0=(G^c/K^c)^\tau $ is a totally real submanifold, where $x_0=eK^c$ is the base point. Note that $\tau (P^c_\pm)
=P^c_\mp$, hence $\tau$ does not define an involution on the flag manifold $G_\C/P_{\rm{max},\C}$. Let $\sigma^c:
\fg_\C\to \fg_\C$ be the conjugation with respect to $\fg^c$ and let $\eta =\tau \sigma^c=
\sigma^c\tau$. Then $\eta$ defines a conjugation on $G_\C/P_{\rm{max},\C}$ which extends the involution
$\tau $ on $G^c/K^c$.

The map $P^c_-\times K_\C^c\times P^c_+\to G_\C,(p_-,k^c,p_+)\mapsto p_-k^cp_+$ is a diffeomorphism onto an open dense subset of $G_\C$ and we write $g=p_-(g)k^c(g)p_+(g)\in P^c_-K_\C^c P^c_+$ for the corresponding triangular decomposition. For future reference we note the following fact:

\begin{Lemma}\label{lem:OpenDenseDecompositionsAgree}
Let $g\in G$. Then, whenever defined, $\overline \nu (g)=p_-(g)$, $\mu (g)\alpha (g)=k^c(g)$, and $\nu (g)=p_+(g)$.
\end{Lemma}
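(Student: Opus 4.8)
The plan is to observe that, on the open dense subset of $G$ where it makes sense, the triangular decomposition $g=\overline\nu(g)\mu(g)\alpha(g)\nu(g)\in\overline NMAN$ is literally an instance of the Harish--Chandra decomposition $g=p_-(g)k^c(g)p_+(g)\in P^c_-K^c_\C P^c_+$ of $G_\C$, and then to invoke the uniqueness of the latter factorization.

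The first step is to record the relevant group-level inclusions. From Section~\ref{sec:BoundedDomain} the complexified $\pm i$-eigenspaces of $\ad Z_0$ are $\fp^c_\pm=\fg_{\pm1,\C}$ and the $0$-eigenspace is $\fk^c_\C=\fg_{0,\C}$, so $P^c_\pm=\exp(\fg_{\pm1,\C})$ while $K^c_\C$ contains $\exp(\fg_{0,\C})$. Combining this with Lemma~\ref{lem:InvolutionsOnPmax}, which gives $N=\exp\fg_1$, $\overline N=\exp\fg_{-1}$ and $MA=G_0$ in the Langlands decomposition of $\Pmax$, one obtains
\[
N=\exp(\fg_1)\subseteq\exp(\fg_{1,\C})=P^c_+,\qquad \overline N=\exp(\fg_{-1})\subseteq\exp(\fg_{-1,\C})=P^c_-,
\]
and $MA=G_0\subseteq K^c_\C$; for the last inclusion one writes $G_0=L\exp(\fp\cap\fq)$ with $\fp\cap\fq\subseteq\fg_0$, so $\exp(\fp\cap\fq)\subseteq\exp(\fg_{0,\C})\subseteq K^c_\C$, and checks that $L=Z_K(X_0)\subseteq K^c_\C$ (its Lie algebra $\fk\cap\fh$ lies in $\fk^c$, and $L\subseteq K^c$).

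The second step is then immediate. Given $g\in G$ with $g\in\overline N\Pmax$, the factorization $g=\overline\nu(g)\cdot\mu(g)\alpha(g)\cdot\nu(g)$ exhibits $g$ as a product of an element of $P^c_-$, one of $K^c_\C$ and one of $P^c_+$; in particular $g\in P^c_-K^c_\C P^c_+$, so $p_\pm(g)$ and $k^c(g)$ are defined. Since the multiplication map $P^c_-\times K^c_\C\times P^c_+\to G_\C$ is a diffeomorphism onto its (open dense) image, and in particular injective there, the two factorizations of $g$ must agree factor by factor, yielding $\overline\nu(g)=p_-(g)$, $\mu(g)\alpha(g)=k^c(g)$ and $\nu(g)=p_+(g)$.

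I expect the only point that is not purely formal --- and hence the main, if mild, obstacle --- to be the group-level inclusion $MA=G_0\subseteq K^c_\C$, as opposed to the trivial Lie-algebra inclusion $\fg_0\subseteq\fk^c_\C$; this comes down to $L=Z_K(X_0)$ being contained in $K^c_\C$. One can either verify this directly from $\fl=\fk\cap\fh\subseteq\fk^c$ together with the connectedness and normalization properties of $L$ established in Section~\ref{sec:NCCSymmetricSpaces}, or circumvent it \emph{a posteriori} by noting that $\overline\nu(g)^{-1}g\,\nu(g)^{-1}$ lies in $G$ and equals $k^c(g)$, so that $\mu(g)\alpha(g)\in G\cap K^c_\C$. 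Everything else in the argument is bookkeeping with the $3$-grading.
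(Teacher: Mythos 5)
Your proposal follows essentially the same route as the paper: the paper's proof is the one-liner that $P^c_-\cap G=\overline N$, $K^c_\C\cap G=G_0$ and $P^c_+\cap G=N$, i.e.\ exactly your observation that the real factors of $g=\overline\nu(g)\,\mu(g)\alpha(g)\,\nu(g)$ lie in $P^c_-$, $K^c_\C$ and $P^c_+$ respectively, after which injectivity of $P^c_-\times K^c_\C\times P^c_+\to G_\C$ forces the two factorizations to coincide. Your version is even slightly leaner, since you correctly note that only the inclusions (not the full intersection identities) are needed.

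The one place where you should tighten the argument is the inclusion $G_0\subseteq K^c_\C$, which you rightly single out as the only nontrivial point, but neither of your two suggested justifications is sound as stated. The ``a posteriori'' shortcut is circular: the identity $\overline\nu(g)^{-1}g\,\nu(g)^{-1}=k^c(g)$ is itself an instance of the uniqueness of the $P^c_-K^c_\C P^c_+$ decomposition and cannot be asserted before you know the middle factor $\mu(g)\alpha(g)$ lies in $K^c_\C$. And the direct route cannot rest on ``connectedness of $L$'': $L=Z_K(X_0)$ is in general disconnected (already for $G=\SL(2,\R)$ one has $L=\{\pm I\}$ with $\fl=0$), and Section~1.1 of the paper does not claim otherwise. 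The clean fix is to argue at the level of $G_0$ itself: $G_0=MA=Z_G(X_0)\subseteq Z_{G_\C}(X_0)$, and $Z_{G_\C}(X_0)$ is the centralizer in the connected group $G_\C$ of the torus generated by $\exp(\C X_0)$ (note $\ad X_0$ has integer eigenvalues), hence is connected with Lie algebra $\fg_{0,\C}=\fk^c_\C$ and therefore contained in $K^c_\C$. This handles all components of $G_0$, including those of $L$, in one stroke; with that substitution your proof is complete and matches the paper's.
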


\begin{proof}
This follows from $P^c_-\cap G=\overline N$, $K^c\cap G=G_0$ and $P^c_+\cap G=N$.
\end{proof}

\subsection{The classification}

We end this section with a classification of all irreducible non-compactly causal symmetric spaces $G/H$ in terms of the Lie algebras $\fg$, $\fg^c$ and $\fh$. Note that $\fg$ is always a simple real Lie algebra, but $\fg_\C$ is not necessarily a simple complex Lie algebra. This is precisely the case if $\fg$ does not have a complex structure, and we therefore divide the classification into two tables, depending on whether $\fg$ has a complex structure or not (see Table~\ref{tab:ClassificationCplx} and \ref{tab:ClassificationReal}). Note that if $\fg$ does have a complex structure, then $(\fg^c,\fh)\simeq(\fh\oplus\fh,\fh)$, the so-called \textit{group case}.

The cases where $\fh$ has a center are called \textit{Cayley type}. Those are exactly the cases where $\fg_0\simeq \fh$. This is further equivalent to $G^c/K^c$ being a tube domain $T_\Omega =\R^n +i \Omega$ with $\Omega\subseteq\R^n$ a symmetric cone and $H_0=\mathrm{Aut}(\Omega)_0$, the automorphism group of the cone.

For each symmetric space $G/H$ we also list the rank of the non-compact Riemannian symmetric space $H/L$ which equals the rank of the compact Riemannian symmetric space $K/L$. In the tables we always assume that $n\geq1$ and $p,q\ge1$.

\begin{table}
\centering
\begin{tabular}{|c|c|c|c|c|}
\hline
& $\fg$ & $\fg^c$&$\fh$ & $\rank H/L$\\
\hline
A & $\sl (p+q,\C)$ & $\su (p,q)\times \su (p,q)$& $\su (p,q)$ & $\min(p,q)$\\
BD & $\so(n+2,\C)$& $\so (2,n)\times \so (2,n)$&$\so (2,n)$ & $2$\\
C & $\sp (n,\C)$& $\sp (n,\R)\times \sp (n,\R)$ & $\sp (n,\R)$ & $n$\\
D & $\so (2n,\C)$ & $\so^*(2n)\times \so^* (2n)$& $\so^*(2n)$ & $\lfloor n/2\rfloor$\\
E$_6$ & $\fe_6(\C)$& $\fe_{6(-14)}\times \fe_{6(-14)}$& $\fe_{6(-14)}$ & $2$\\
E$_7$ & $\fe_7(\C)$& $\fe_{7(-25)}\times \fe_{7(-25)}$& $\fe_{7(-25)}$ & $3$\\
\hline
\end{tabular}
\caption{$\fg$ simple with complex structure}\label{tab:ClassificationCplx}
\end{table}

\begin{table}
\centering
\begin{tabular}{|c|c|c|c|c|c|}
\hline
& $\fg$ & $\fg^c$&$\fh$& $\rank H/L$\\
\hline
A I & $\sl (p+q,\R)$& $\su(p,q)$& $\so (p,q)$ & $\min\{p,q\}$\\
A II & $\su^*(2(p+q))$&$\su(2p,2q)$& $\sp (p,q)$ & $\min\{p,q\}$\\
A III & $\su (n,n)$& $\su (n,n)$& $\sl (n,\C)\times \R$ & $n$ \\
BD Ia & $\so (n,n)$& $\so^*(2n)$& $\so (n,\C)$ & $\lfloor n/2\rfloor$ \\
BD Ib & $\so (p+1,q+1)$ & $\so (p+q,2)$&$\so (p,1)\times \so (1,q)$ & $2$\\
BD Ic & $\so (n+1,1)$ & $\so(n,2)$&$\so (n,1)$ & $1$\\
C I & $\sp (n,\R)$& $\sp (n,\R)$& $\sl (n,\R)\times \R$ & $n$\\
C II & $\sp (n,n)$& $\sp (2n,\R)$& $\sp (n,\C)$& $n$\\
D III & $\so^*(4n)$& $\so^*(4n)$& $\su^*(2n)\times \R$ & $n$\\
E I & $\fe_{6(6)}$&$\fe_{6(-14)}$&$\sp (2,2)$ & $2$\\
E IV & $\fe_{6(-26)}$&$\fe_{6(-14)}$ & $\ff_{4(-20)}$ & $1$\\
E V & $\fe_{7(7)}$&$\fe_{7(-25)}$&$\su^*(8)$ & $3$\\
E VII & $\fe_{7(-25)}$&$\fe_{7(-25)}$&$\fe_{6(-26)}\times\R$ & $3$\\
\hline
\end{tabular}
\caption{$\fg$ simple without complex structure}\label{tab:ClassificationReal}
\end{table}

\begin{Ex} Let $\K\in\{\R,\C, \H\}$ and $G=\SL(p+q,\K)$ with $p,q\geq1$. If $\K=\H$ this means that $G=\SU^*(2(p+q))$. We choose the maximal compact subgroups
$K$ of $G$ given by $\SO(p+q)$, $\SU(p+q)$ and $\Sp(p+q)$, respectively. Let $\cB=\Gr_p(\K^{p+q})$ be the space of all
$p$-dimensional $\K$-subspaces of $\K^{p+q}$. In the case $\K = \H$ we let the vector space multiplication act on the right and $G$ act on the left.
The group $G$ acts transitively on $\cB$ by $g\cdot b=\{g(v)\mid v\in b\}$. In fact, the maximal compact subgroup $K$ already acts transitively and
$\cB\simeq K/L$ is a symmetric space, where $L$ is the stabilizer of
\[b_0= \K e_1\oplus \cdots \oplus \K e_p\]
with $(e_j)$ denoting the standard basis of $\K^{p+q}$. The stabilizer of $b_0$ in $G$ is the maximal parabolic subgroup $\Pmax=MAN=G_0N$ with
\[ G_0 = \left\{\left. \begin{pmatrix} a & 0 \\ 0 &b\end{pmatrix}\, \right| a\in \GL (p,\K), b\in \GL (q,\K), \det a \det b=1\right\} \]
and
\[N= \left\{\left. n_X=\begin{pmatrix} \rI_p & X\\ 0 & \rI_q\end{pmatrix}\, \right|\, X\in M_{p\times q}(\K )\right\}\simeq M_{p\times q}(\K ). \]
In particular it follows that $N$ is abelian, hence $\cB$ is a symmetric $R$-space with grading element
\[X_0=\begin{pmatrix} \frac{q}{p+q}\rI_p & 0 \\ 0 & -\frac{p}{p+q}\rI_q\end{pmatrix}\, . \]

Define $\bn_X =(n_{X^t})^t$, $X\in M_{q\times p}(\K)$. Then $\overline N=\{\bn_X\mid X\in M_{q\times p}(\K)\}=N^t$ and $\theta (n_X)=\bn_{-X^*}$, where $X^*=\overline{X^t}$ with respect to the standard conjugation of $\K$. Write $\K^{p+q}=\K^p\times \K^q$
and write accordingly the elements of $\K^{p+q}$ as $\bx = (\bx_p,\bx_q)$. Then
\[\bn_X\cdot b =\{(\bx_p,X\bx_p+\bx_q)\mid (\bx_p,\bx_q)\in b\}\, .\]
In particular 
\[b_X := \bn_X\cdot b_0=\mathrm{Graph}(X)=\{(\bx_p, X\bx_p)\mid \bx_p\in \K^p\}\]
and
\[\overline N\cdot b_0 = \{b_X \mid X\in M_{q\times p}(\K)\}=\{b\in\cB\mid \pr_{\K^p}(b)=\K^p\}\]
where $\pr_{\K^p} : \K^{p+q}=\K^p\oplus\K^q\to \K^p$ is the natural projection.
Given $b\in\cB$ with $\pr_{\K^p}(b)=\K^p$ the matrix $X$, viewed as a linear map $\K^p\to \K^q$, such that $b=b_X$ can be recovered from
$b$ by $X=\pr_{\K^q}\circ (\pr_{\K^p}|_b)^{-1}$ where one uses that $\pr_{\K^p}|_b:b\to \K^p$ is a linear isomorphism. 

Identifying $\overline N\cdot b_0\simeq\overline N$, then the almost everywhere defined $G$ action is given by 
\[g\cdot X= (c+dX)(a+bX)^{-1}, \qquad X\in M_{q\times p}(\K),\,g=\begin{pmatrix} a & b\\ c & d\end{pmatrix}\, .\]
We note that this unusual actions comes from our choice of $X_0$. Replacing $X_0$ by $-X_0$ would interchange the
role of $N$ and $\bN$ and lead to the more commonly used action $g\cdot X=(aX+b)(cX+d)^{-1}$, where $g$ is as above and
$X\in M_{p\times q}(\K )$.
 
The involution $\wtt $ is given by conjugation with
$$ \rI_{p,q}:=\begin{pmatrix} \rI_p&0\\ 0 & -\rI_q\end{pmatrix}. $$
The corresponding non-compactly causal involution is 
$\tau =\theta\wtt$ and it corresponds to the following symmetric pairs
$(\fg,\fh)$:
$$ (\sl (p+q,\R), \so (p,q)), \quad (\sl (p+q,\C), \su (p,q)), \quad (\su^*(2(p+q)),\sp (p,q)) $$
for $\K=\R,\C,\H$, respectively. The corresponding Hermitian symmetric pairs $(\fg^c,\fk^c)$ are 
\begin{equation*}
 (\su (p,q),\fs (\fu (p)\times \fu (q))), \quad (\su (p,q)\times \su (p,q), \fs (\fu (p)\times \fu(q))\times \fs(\fu (p)\times \fu(q))),
\end{equation*}
\begin{equation*}
 (\su (2p,2q),\fs(\fu(2p)\times \fu(2q))),
\end{equation*}
respectively, where for $\K=\C$ we have $G^c/K^c =\SU(p,q)/\rS (\rU (p)\times \rU( q))\times \overline{\SU(p,q)/\rS (\rU (p)\times \rU( q)})$, the bar indicating the opposite complex structure.

The spaces $G/\Pmax\simeq\Gr_p(\K^{p+q})$ and $G/\bP\simeq\Gr_q(\K^{p+q})$ are isomorphic as manifolds and $K$-spaces. The
isomorphism is given by $b\mapsto b^\perp$, where the orthogonal complement is taken with respect to the $K$-invariant inner product on $\K^{p+q}$. On the group level this isomorphism corresponds
to $g\mapsto \theta (g)$. On $\cB=\Gr_p(\K^{p+q})$ the involution $\wtt$ corresponds to $\wtt (b)=\rI_{p.q} b =\{(\bx_p,-\bx_q)\mid\bx=(\bx_p,\bx_q)\in b\}$. Hence $\tau (b)=(\rI_{p,q}b)^\perp$.
\end{Ex}

\section{Principal series representations and intertwining operators}\label{Se:Principal}

In this section we recall some basic facts about degenerated principal series representations induced from the maximal parabolic subgroup $\Pmax$. We then introduce the standard intertwining operators and the Berezin transform. The material is mostly a simple generalization of \cite{OP12} to symmetric $R$-space. We therefore often refer to \cite{OP12} for references.

\subsection{Degenerate Principal Series Representations}

Define $\rho \in \fg_0^*$ by $\rho (X):=\frac{1}{2}\Tr (\ad (X)|_{\fn})$. Then $\rho |_{\fm}=0$ and we view $\rho$ as an element in $\fa^*$. If
$X=rX_0$ then $\rho (X)=r \frac{\dim \fn}{2}$. For $g\in G$, $b=k\cdot b_0\in\cB$, and $\lambda\in\fa_\C^*$ we write
\[j_\lambda (g,b):=a(gk)^{-\lambda - \rho}\quad \text{and}\quad j(g,b):= j_\rho (g,b)= a(gk)^{-2\rho}\, .\]

For $\lambda\in\fa_\C^*$ let $\cH_\lambda$ be the Hilbert space of measurable functions $f: G\to \C$ such that 
\begin{enumerate}
\item $f(x man)=a^{-\lambda -\rho} f(x) $ for all $x\in G$ and $man\in MAN$,
\item $\int_K|f(k)|^2\, dk<\infty$.
\end{enumerate}
Then define a representation $\pi_\lambda$ of $G$ acting on $\cH_\lambda$ by
\[\pi_\lambda (g)f(x) := f (g^{-1}x)\, .\] 

Restricting to $K$ and using that $f|_K$ is right $L$-invariant
it follows that $\cH_\lambda \simeq L^2(\cB)$ and that $\pi_\lambda$ acting on $L^2(\cB )$  is given by
\[\pi_\lambda (g)f(b)=j_\lambda (g^{-1},b)f(g^{-1}\cdot b)\, .\]
From this expression it is easy to see that $\pi_\lambda(G)$ leaves $C^\infty(\cB)$ invariant. Note that in the language of parabolically induced representations we have
$$ (\pi_\lambda,L^2(\cB)) \simeq \Ind_{\Pmax}^G(\1\otimes e^\lambda\otimes\1), $$
where the induction is normalized.
 
We recall the following well-known fact which follows from the integral formula
\begin{multline}
 \int_{\cB} f(g\cdot b)j(g,b)\, db=\int_{\cB} f(b)\, db=\int_{\bN} f(\bn\cdot b_0)a(\bn)^{-2\rho}\, d\bn\\
 f\in L^1(\cB),\, g\in G\,.\label{eq:IntFormulaK}
\end{multline}

\begin{Thm}\label{th:dual}
Let $f,h\in L^2(\cB)$ and $g\in G$, then
\[\ip{\pi_\lambda (g)f}{h}_{L^2(\cB)}=\ip{f}{\pi_{-\overline \lambda }(g^{-1})h}_{L^2(\cB)}\, .\]
In particular, $(\pi_\lambda,L^2(\cB))$ is unitary if and only if $\lambda\in i\fa^*$.
\end{Thm}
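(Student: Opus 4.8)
The plan is to establish the pairing identity by a direct change-of-variables computation and then deduce the unitarity criterion from it. First I would write both sides using the $L^2(\cB)$-realization, so that
\[
\ip{\pi_\lambda(g)f}{h}_{L^2(\cB)} = \int_\cB j_\lambda(g^{-1},b)\,f(g^{-1}\cdot b)\,\overline{h(b)}\,db.
\]
The substitution $b \mapsto g\cdot b$ converts the integral over $b$ into an integral over $g\cdot b$, and the Jacobian factor is exactly $j(g,b)=a(gk)^{-2\rho}$ by the integral formula \eqref{eq:IntFormulaK}. The key cocycle computation is then that $j_\lambda(g^{-1},g\cdot b)\cdot j(g,b) = j_{-\overline\lambda}(g,b)^{-}{}$ — more precisely one checks the cocycle relation $a((g^{-1})(g k)) = a(k)$ together with $j(g,b) = j_\lambda(g,b)\,\overline{j_{\lambda}(g,b)}\cdot(\text{correction})$, which after taking the complex conjugate on the $h$-factor produces precisely the factor $j_{-\overline\lambda}(g^{-1},b)$ needed to read off $\pi_{-\overline\lambda}(g^{-1})h$. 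Thus the right-hand side becomes $\int_\cB f(b)\,\overline{j_{-\overline\lambda}(g^{-1},b)\,h(g^{-1}\cdot b)}\,db = \ip{f}{\pi_{-\overline\lambda}(g^{-1})h}_{L^2(\cB)}$.

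The one genuinely delicate point — and the step I expect to be the main obstacle — is bookkeeping the $a$-valued cocycle $j_\lambda$ correctly under the change of variables: one must verify that $j_\lambda$ satisfies $j_\lambda(g_1g_2,b) = j_\lambda(g_1,g_2\cdot b)\,j_\lambda(g_2,b)$ and that the measure-transformation factor $j(g,b)$ from \eqref{eq:IntFormulaK} combines with the two copies of $\rho$ (one from $\pi_\lambda$, one hidden in the conjugate) to leave exactly the $\rho$-shift appropriate for $\pi_{-\overline\lambda}$. Since $\rho$ is real-valued on $\fa$ and $\overline{a^{-\lambda-\rho}} = a^{-\overline\lambda-\rho}$, the complex conjugation flips $\lambda$ to $\overline\lambda$ but fixes $\rho$, which is precisely why the $\rho$'s cancel in pairs and the formula closes up with $-\overline\lambda$ rather than something asymmetric.

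For the second assertion, the identity just proved says that the adjoint of $\pi_\lambda(g)$ on $L^2(\cB)$ is $\pi_{-\overline\lambda}(g^{-1})$. The representation $\pi_\lambda$ is unitary precisely when $\pi_\lambda(g)^* = \pi_\lambda(g)^{-1} = \pi_\lambda(g^{-1})$ for all $g$, i.e.\ when $\pi_{-\overline\lambda}(g^{-1}) = \pi_\lambda(g^{-1})$ for all $g\in G$. Since distinct parameters in $\fa_\C^*$ give inequivalent (and in fact non-equal, as concrete operators on the fixed space $L^2(\cB)$) representations — one sees this immediately by evaluating on the constant function and an element $\exp(tX_0)$, whose action multiplies by a power of $a(\exp(tX_0))$ detecting $\lambda$ — this forces $-\overline\lambda = \lambda$, that is, $\lambda \in i\fa^*$. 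Conversely, if $\lambda\in i\fa^*$ then $-\overline\lambda = \lambda$ and the identity reads $\ip{\pi_\lambda(g)f}{h} = \ip{f}{\pi_\lambda(g^{-1})h}$, so $\pi_\lambda(g)$ is unitary for every $g$; together with the already-noted strong continuity this gives a unitary representation, completing the proof.
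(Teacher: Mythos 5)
Your computation is exactly the argument the paper itself has in mind: the theorem is stated there as a well-known consequence of the integral formula \eqref{eq:IntFormulaK}, with no further proof, and your ingredients (change of variables with Jacobian $j(g,b)$, the cocycle relation for $j_\lambda$, and the fact that $\rho$ is real so conjugation only flips $\lambda$ to $\overline\lambda$) are precisely what is needed; the unitarity criterion then follows as you say, since the multiplier of $\pi_\lambda(\exp(tX_0))$ detects $\Re\lambda$. One bookkeeping slip to fix: after substituting $b\mapsto g\cdot b$ the integrand is $j_\lambda(g^{-1},g\cdot b)\,j(g,b)\,f(b)\,\overline{h(g\cdot b)}$, and with $b=k\cdot b_0$ one has $j_\lambda(g^{-1},g\cdot b)\,j(g,b)=a(gk)^{\lambda-\rho}=\overline{j_{-\overline\lambda}(g,b)}$, so the right-hand side is $\int_{\cB} f(b)\,\overline{j_{-\overline\lambda}(g,b)\,h(g\cdot b)}\,db=\ip{f}{\pi_{-\overline\lambda}(g^{-1})h}_{L^2(\cB)}$; your final display has $g^{-1}$ in place of $g$ in both the cocycle factor and the argument of $h$, which as written would identify the integral with $\ip{f}{\pi_{-\overline\lambda}(g)h}_{L^2(\cB)}$ instead --- correcting this inverse, the proof closes as intended.
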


\begin{Cor}\label{co:InterComp}
Let $\lambda\in\fa^*$ and assume that
$$ A:(\pi_\lambda,C^\infty(\cB))\to(\pi_{-\lambda},L^2(\cB)) $$
is a $G$-intertwining operator. Then the Hermitian form
$$ (f,h)\mapsto\ip{A(\lambda )f}{h}_{L^2(\cB)} $$
on $C^\infty(\cB)$ is $G$-invariant.
\end{Cor}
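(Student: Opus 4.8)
The plan is to derive this directly from Theorem~\ref{th:dual} by combining it with the intertwining property of $A$. First I would observe that, since $\lambda\in\fa^*$ is real, we have $-\overline\lambda=-\lambda$, so Theorem~\ref{th:dual} specializes to the statement that $\pi_{-\lambda}$ is the adjoint of $\pi_\lambda$ with respect to the $L^2(\cB)$-pairing: for $f\in C^\infty(\cB)$ and $h\in L^2(\cB)$,
\[
\ip{\pi_\lambda(g)f}{h}_{L^2(\cB)}=\ip{f}{\pi_{-\lambda}(g^{-1})h}_{L^2(\cB)}.
\]
Here one should note that the pairing extends to $C^\infty(\cB)\times L^2(\cB)$ since $C^\infty(\cB)\subseteq L^2(\cB)$ as $\cB$ is compact, and that $\pi_\lambda$ preserves $C^\infty(\cB)$ (remarked just after the $L^2$-realization), so all expressions below are well defined.

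Next I would use that $A\colon(\pi_\lambda,C^\infty(\cB))\to(\pi_{-\lambda},L^2(\cB))$ intertwines the two actions, i.e. $A\pi_\lambda(g)=\pi_{-\lambda}(g)A$ for all $g\in G$. Denote the Hermitian form by $B(f,h):=\ip{A(\lambda)f}{h}_{L^2(\cB)}$ for $f,h\in C^\infty(\cB)$. Then for $g\in G$,
\[
B(\pi_\lambda(g)f,\pi_\lambda(g)h)=\ip{A\pi_\lambda(g)f}{\pi_\lambda(g)h}_{L^2(\cB)}=\ip{\pi_{-\lambda}(g)Af}{\pi_\lambda(g)h}_{L^2(\cB)}.
\]
Now apply the adjunction identity above with $g$ replaced by $g^{-1}$ and with $f$ there taken to be $\pi_{-\lambda}(g)Af$ (note $Af\in L^2(\cB)$, and one needs the identity in the form pairing $L^2$ against $C^\infty$, which follows from Theorem~\ref{th:dual} by the same density/symmetry argument) to move $\pi_\lambda(g)$ off the second slot:
\[
\ip{\pi_{-\lambda}(g)Af}{\pi_\lambda(g)h}_{L^2(\cB)}=\ip{\pi_{-\lambda}(g^{-1})\pi_{-\lambda}(g)Af}{h}_{L^2(\cB)}=\ip{Af}{h}_{L^2(\cB)}=B(f,h).
\]
This gives $B(\pi_\lambda(g)f,\pi_\lambda(g)h)=B(f,h)$, which is the asserted $G$-invariance.

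The one genuine subtlety — and the step I would be most careful about — is the direction in which Theorem~\ref{th:dual} is being used: it is stated for $f,h\in L^2(\cB)$, whereas here I need to pair an $L^2$-function ($Af$, or $\pi_{-\lambda}(g)Af$) with a smooth function, and I need the identity with $\pi_\lambda(g)$ on one side and $\pi_{-\lambda}(g^{-1})$ on the other (versus $\pi_\lambda(g)$ and $\pi_{-\overline\lambda}(g^{-1})$). For real $\lambda$ these coincide, and since $\pi_\lambda$ and $\pi_{-\lambda}$ act by the same formula $\pi_\mu(g)f(b)=j_\mu(g^{-1},b)f(g^{-1}\cdot b)$ with $j_\mu$ depending on $\mu$ only through the exponent, the identity of Theorem~\ref{th:dual} makes sense verbatim for $f\in L^2(\cB)$, $h\in C^\infty(\cB)$ (or vice versa) with both sides finite; one can either invoke it directly or note it extends by continuity from $C^\infty(\cB)$, which is dense in $L^2(\cB)$. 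Once this bookkeeping is settled the computation is the short chain displayed above, so there is no real obstacle beyond tracking conjugates and inverses correctly.
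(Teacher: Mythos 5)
Your argument is correct and is exactly the intended one: the paper gives no separate proof of Corollary~\ref{co:InterComp}, treating it as an immediate consequence of Theorem~\ref{th:dual} combined with the intertwining relation $A\pi_\lambda(g)=\pi_{-\lambda}(g)A$, which is precisely the chain of identities you wrote. Your worry about pairing $L^2$ against $C^\infty$ is not needed, since $C^\infty(\cB)\subseteq L^2(\cB)$ and $\pi_\mu(g)$ is bounded on $L^2(\cB)$ (the multiplier $j_\mu(g^{-1},\cdot)$ is continuous on the compact space $\cB$), so Theorem~\ref{th:dual} applies verbatim to all the pairings you use.
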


We also have:
\begin{Thm}[{see \cite[Lemma 5.3]{VW90}}] There exists an open dense subset $U \subset \fa_\C^*$ of full measure such that
$\pi_\lambda$ is irreducible for $\lambda\in U$.
\end{Thm}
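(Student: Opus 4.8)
This is the special case, for our maximal parabolic $\Pmax$, of a general result of Vogan and Wallach on generic irreducibility of parabolically induced representations, so the shortest proof is simply to invoke \cite[Lemma~5.3]{VW90}. Let me nonetheless indicate how that argument specializes to the present setting.

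The starting point is that $\pi_\lambda$ is a Harish--Chandra module of finite length, with a length bound independent of $\lambda$; in our situation this is transparent because $\cB=K/L$ is a compact symmetric space, so $L^2(\cB)$ is multiplicity free as a $K$-module (Cartan--Helgason). Hence each $K$-isotypic component of $\pi_\lambda$ is $K$-irreducible, and any $\pi_\lambda(G)$-invariant subspace is completely determined by the set of $K$-types it contains. The $\fg$-action between two \emph{adjacent} $K$-types $\delta,\delta'$ is, after choosing bases, a single transition coefficient $\tau_{\delta,\delta'}(\lambda)$, a meromorphic function of $\lambda$ that one can read off from the $\overline N$-picture of Lemma~\ref{le:4.4} together with the explicit $K$-type formulas in \cite{OP12,MS14,BOO96}.

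Next I would bring in the standard intertwining operator $A(\lambda)\colon\pi_\lambda\to\pi_{-\lambda}$ from Corollary~\ref{co:InterComp} --- here the little Weyl group $W(\fa)$ is $\{\pm1\}$ since $\dim\fa=1$ --- together with $A(-\lambda)$, using that $A(-\lambda)A(\lambda)=\gamma(\lambda)\,\id$ for a meromorphic scalar $\gamma$ (a Maass--Selberg type identity). Whenever $\gamma(\lambda)\notin\{0,\infty\}$ both $A(\pm\lambda)$ are isomorphisms of Harish--Chandra modules; feeding this into the submodule description above --- a proper invariant subspace would have to ``stop'' at some adjacent pair of $K$-types, forcing a zero or a pole of the corresponding $\tau_{\delta,\delta'}$ at $\lambda$ --- one obtains that $\pi_\lambda$ is irreducible for every $\lambda$ outside the zeros and poles of $\gamma$ and of the countably many $\tau_{\delta,\delta'}$.

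The one real difficulty --- and the reason the soft Baire-category argument, which only yields a dense $G_\delta$ of full measure, is not enough --- is to show that this reducibility set is closed with empty interior, i.e.\ that the countable family of hyperplanes (in our one-dimensional case, arithmetic progressions of points in $\fa_\C^*$) on which the $\tau_{\delta,\delta'}$ and $\gamma$ vanish or blow up is \emph{locally finite}. This is exactly the content of \cite[Lemma~5.3]{VW90}: the uniform finite-length bound forces only finitely many of the transition conditions to be effective in any compact region, so their union is locally finite and its complement open, dense and of full measure. I expect this local finiteness to be the only genuine obstacle; the remainder is bookkeeping with data that is already explicit in the literature.
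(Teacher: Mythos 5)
At the level at which the paper treats this statement you are in agreement with it: the paper offers no argument of its own and simply cites \cite[Lemma~5.3]{VW90}, exactly as you do in your first sentence, so the proof-by-citation is the paper's proof.

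Two cautions about your supplementary sketch, though, since parts of it contradict the paper. First, the operator $A(\lambda)\colon\pi_\lambda\to\pi_{-\lambda}$ of Corollary~\ref{co:InterComp} does \emph{not} exist in general: it requires an element $w_0\in N_K(\fa)$ with $\Ad(w_0)|_\fa=-\id$, i.e.\ that $\Pmax$ and $\bP$ be conjugate, and the paper stresses (beginning of Section~\ref{Se:Berezin}, and the discussion of the complementary series) that this fails in many cases, e.g.\ for $\cB=\Gr_p(\K^{p+q})$ with $p\neq q$. So your claim that ``the little Weyl group is $\{\pm1\}$ since $\dim\fa=1$'' is not correct; what always exists is only $J(\lambda)\colon\pi_\lambda\to\pi_{-\lambda}^\theta$ from Theorem~\ref{thm:MeromorphicContinuationIntertwiningOperators}, mapping into the representation induced from the \emph{opposite} parabolic. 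Second, the Maass--Selberg-type identity you want to use, $A(-\lambda)A(\lambda)=\gamma(\lambda)\,\id$ (in the paper: $J(-\lambda)\circ J(\lambda)=\eta_0(-\lambda)\eta_0(\lambda)\,\id$, Theorem~\ref{ThmJlambda}), is itself proved in the paper \emph{from} generic irreducibility; without irreducibility, multiplicity one only gives you the $K$-type-wise scalars $\eta_\mu(-\lambda)\eta_\mu(\lambda)$, which need not be tied to $\mu=0$. So an argument along your lines must work $K$-type by $K$-type with the eigenvalues $\eta_\mu$, and the genuine content --- that the zeros and poles of the countably many $\eta_\mu$ (equivalently, the reducibility locus) form a locally finite family, so that the complement is open, dense and of full measure --- is precisely what you are deferring to \cite{VW90} (or, concretely, to the explicit eigenvalue formulas of \cite{OZ95,S93,S95,Z95,OP12,MS14} obtained via the spectrum generating operator of \cite{BOO96}). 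As long as you present the sketch as a gloss on the citation and not as an independent proof, this is acceptable; as a standalone argument it would be both incomplete and, via the $A(\lambda)$ step, incorrect in the non-conjugate cases.
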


We can also realize $\pi_\lambda $ on functions on $\bN$ by restriction. The formula for the representation is then
\begin{align*}
\pi_\lambda (g)f(\bn)&=f(g^{-1}\bn)=f( \overline\nu (g^{-1}\bn)\mu (g^{-1}\bn)\alpha (g^{-1}\bn) \nu (g^{-1}\bn))\\
&=\alpha (g^{-1}\bn )^{-\lambda - \rho}f(g^{-1}\cdot \bn)\, .
\end{align*}
By \eqref{eq:IntFormulaK} we further have
\begin{align*}
 \int_{\bN} |f (\bn)|^2 a(\bn)^{2\Re \lambda}\, d\bn &=\int_{\bN } |f(k (\bn))|^2 a(\bn)^{-2\rho}\, d\bn =\int_K |f(k)|^2\, dk\, .
\end{align*}
The restriction from $G$ to $\bN$ therefore defines a unitary isomorphism $\cH_\lambda\simeq L^2(\bN, a(\bn )^{2\Re \lambda}d\bn)$. In particular,
$\cH_\lambda \simeq L^2(\bN)$ if $\lambda\in i\fa^*$. The corresponding unitary isomorphism $L^2 (\cB )\simeq L^2 (\bN, a(\bn )^{2\Re \lambda }d\bn)$
is given by $f\mapsto f_\lambda$, where
\begin{equation}\label{eq:f-lambda}
f_\lambda (\bn) : =a(\bn )^{-\lambda -\rho} f(\bn \cdot b_0)\, .
\end{equation}

\subsection{The intertwining operators}\label{sec:IntertwiningOperators}
 
In the induced picture \textit{the standard intertwining operator} $J(\lambda)$  is formally given by
\[J(\lambda)f(x)=\int_{\overline N} f (x\overline n)\, d\overline n, \qquad x\in G.\]
Since it is easier to discuss $J(\lambda)$ in the compact picture, we first find an expression for it as an operator acting functions on $\cB$. For this let\begin{equation}\label{de:KerAlpha}
 \alpha_\lambda:\cB\times\cB\to\C, \quad \alpha_\lambda(k_1\cdot b_0,k_2\cdot b_0):=\alpha (k_1^{-1}k_2)^{\lambda-\rho}\, .
\end{equation}
Applying $\theta$ to both sides and taking the inverse, it follows that $\alpha_\lambda $ is symmetric, i.e. $\alpha_\lambda (a , b)=\alpha_\lambda (b ,a )$.
Then an easy computation using the integral formula \eqref{eq:IntFormulaK} shows that formally
\begin{equation}
 J(\lambda)f(x) = \int_{\cB} \alpha_\lambda(x,y)f(y)\,dy, \qquad x\in\cB.\label{eq:IntertwinerIntegralOperator}
\end{equation}
The following statement now makes the construction of the intertwining operators rigorous:

\begin{Thm}[{see \cite{VW90}}]\label{thm:MeromorphicContinuationIntertwiningOperators}
\begin{enumerate}
\item There exists $c\in\R$ such that the integral in \eqref{eq:IntertwinerIntegralOperator} converges for all $\lambda\in\fa_\C^*$ with $\Re (\lambda (X_0))>c$ and $f\in L^2(\cB)$. This constructs an intertwining operator $J(\lambda):(\pi_\lambda,L^2(\cB))\to (\pi_{-\lambda}^\theta,L^2(\cB))$ where $\pi_{-\lambda}^\theta=\pi_{-\lambda}\circ\theta$.
\item For fixed $f\in C^\infty(\cB)$ the function $\lambda\mapsto J(\lambda )f$ extends to a meromorphic function on $\fa^*_\C$ with values in $C^\infty(\cB)$.
\end{enumerate}
\end{Thm}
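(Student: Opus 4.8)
The plan is to work directly with the integral kernel $\alpha_\lambda$ of \eqref{eq:IntertwinerIntegralOperator}: first establish convergence, boundedness and the intertwining property on a half-space of parameters, and then obtain the meromorphic continuation by recognising $\alpha_\lambda$ as a complex power of a real-analytic function and invoking the Atiyah--Bernstein--Gelfand continuation of such powers on the compact manifold $\cB$.

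\textbf{Part (i): convergence, boundedness, equivariance.} Since $\pi_\lambda$ and $\pi_{-\lambda}^\theta$ are both realised on $L^2(\cB)$ and $\cB$ is compact, it suffices to control the kernel $\alpha_\lambda(x,y)=\alpha(k_1^{-1}k_2)^{\lambda-\rho}$; by $K$-equivariance ($\alpha_\lambda(k\cdot x,k\cdot y)=\alpha_\lambda(x,y)$ for $k\in K$) one may restrict to $y\mapsto\alpha_\lambda(b_0,y)$. Transporting to $\bN$ via \eqref{eq:IntFormulaK}, the absolute convergence of \eqref{eq:IntertwinerIntegralOperator} reduces to the integrability over $\bN$ of a product of the shape $a(\bn)^{\lambda+\rho}\cdot a(\bn)^{-2\rho}$ against a bounded function; taking $f\equiv1$ in \eqref{eq:IntFormulaK} gives $\int_{\bN}a(\bn)^{-2\rho}\,d\bn=\vol(\cB)<\infty$, and the factor $a(\bn)^{\lambda+\rho}$ stays bounded on $\bN$ once $\Re\lambda$ lies in a suitable half-space, the cutoff $c$ being dictated by the decay of $a(\bn)$ as $\bn\to\infty$ in $\bN$ (hence by the restricted-root multiplicities). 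Thus \eqref{eq:IntertwinerIntegralOperator} converges absolutely and locally uniformly in $\lambda$ for $\Re(\lambda(X_0))>c$ and $f$ continuous; a Schur test against the symmetric kernel $\alpha_\lambda$ then yields $\|J(\lambda)f\|_{L^2(\cB)}\le C(\lambda)\|f\|_{L^2(\cB)}$, so $J(\lambda)$ extends to all of $L^2(\cB)$. Finally $J(\lambda)\pi_\lambda(g)=\pi_{-\lambda}^\theta(g)J(\lambda)$ is the usual Knapp--Stein computation: substitute $\pi_\lambda(g)f(x)=f(g^{-1}x)$ under $\int_{\bN}f(x\bn)\,d\bn$ and use left-invariance of Haar measure on $\bN$ together with the rescaling $a\bn a^{-1}$ which converts sections over $G/\Pmax$ into sections over $G/\bP=G/\theta\Pmax$.

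\textbf{Part (ii): meromorphic continuation.} On the compact manifold $\cB\times\cB$ the kernel $\alpha_\lambda$ is a complex power $\Delta(x,y)^{s(\lambda)}$ of a single real-analytic function $\Delta\ge0$, with $s(\lambda)$ affine in $\lambda$: in the $\bN$-chart $\Delta$ is (a power of) the polynomial computing $a(\bn)^{-2\rho}$ --- for $\cB=\Gr_p(\K^{p+q})$ this is $\det(\rI+X^*X)$ --- and its zero set in $\cB\times\cB$ is the real hypersurface $\{(k_1\cdot b_0,k_2\cdot b_0):k_1^{-1}k_2\notin\bN\Pmax\}$ along which the open $\bN$-cell degenerates. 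Fixing $f\in C^\infty(\cB)$, the Atiyah--Bernstein--Gelfand theorem on meromorphic continuation of $s\mapsto\int\Delta^{s}\varphi$ for smooth amplitudes $\varphi$ then gives the continuation of $\lambda\mapsto J(\lambda)f$ to all of $\fa_\C^*$, with poles confined to finitely many arithmetic progressions determined by the Bernstein--Sato $b$-function of $\Delta$; since the construction is continuous in $f$ and $K$-equivariant, the continued family takes values in $C^\infty(\cB)$, and the intertwining identity persists by analytic continuation from $\Re(\lambda(X_0))>c$. As an alternative, since $L^2(\cB)=L^2(K/L)$ is multiplicity-free as a $K$-module, $J(\lambda)$ acts on each $K$-type by a scalar $c_\gamma(\lambda)$; one computes, e.g.\ with the spectrum-generating operator of \cite{BOO96,MS14}, that $\lambda\mapsto c_\gamma(\lambda)$ is a ratio of Gamma functions, and the rapid decay of the $K$-type coefficients of smooth vectors upgrades pointwise meromorphy to meromorphy with values in $C^\infty(\cB)$.

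\textbf{Main obstacle.} The delicate part is (ii): identifying the singular locus of $\alpha_\lambda$ in $\cB\times\cB$ and checking, in coordinates transverse to it, that $\alpha_\lambda$ really is a complex power of a defining function of that hypersurface --- equivalently, that an $f\in C^\infty(\cB)$ pulls back in the $\bN$-chart (after compactification) to an amplitude to which Bernstein's theorem applies --- and then verifying that the continued operator still maps $C^\infty(\cB)$ into $C^\infty(\cB)$ and still intertwines $\pi_\lambda$ with $\pi_{-\lambda}^\theta$. Pinning down the constant $c$ in (i) likewise requires the explicit restricted-root data.
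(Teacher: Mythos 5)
The paper offers no proof of this theorem at all: it is quoted from Vogan--Wallach \cite{VW90}, and their argument is genuinely different from yours. In \cite{VW90} the integrals are shown to converge on a shifted cone and the continuation is obtained algebraically, by tensoring with finite-dimensional representations to shift the induction parameter and deducing a holomorphic family with values in the smooth globalization; that method works for an arbitrary parabolic in any real reductive group and never uses the special geometry of symmetric $R$-spaces. Your route exploits exactly that geometry: since $\fa$ is one-dimensional and $\fn$ is abelian, in the $\bN$-picture the kernel is a complex power $\Delta^{s(\lambda)}$ of a single real-analytic (essentially polynomial, e.g.\ $\det(\rI+X^*X)$ for the Grassmannians) function with $s$ affine in $\lambda$, so the Atiyah/Bernstein--Gelfand continuation of complex powers applies; this is the classical route in this degenerate setting (and the one implicitly behind \cite{OZ95,S93,S95,Z95,OP12,MS14}), and it buys more than \cite{VW90} does, namely a localization of the possible poles in arithmetic progressions governed by the $b$-function, and compatibility with the explicit $K$-type scalars $\eta_\mu(\lambda)$ that your second variant invokes via \cite{BOO96,MS14}. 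What \cite{VW90} buys is uniformity: no case-by-case structure theory, no need to know that the kernel is a power of a polynomial.

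A few points need repair or care, all of the kind you already flag. First, a sign slip: with the paper's conventions the integrand for $x=b_0$ is $a(\bn)^{-\lambda-\rho}=a(\bn)^{-\lambda+\rho}\cdot a(\bn)^{-2\rho}$, so the ``extra factor'' relative to the integrable density $a(\bn)^{-2\rho}$ is $a(\bn)^{-\lambda+\rho}$, bounded for $\Re(\lambda(X_0))$ large (not $a(\bn)^{\lambda+\rho}$); this only affects bookkeeping, since the theorem asserts the existence of some $c$, not its optimal value. Second, Bernstein's theorem is stated for compactly supported amplitudes, whereas your amplitude $f(\bn\cdot b_0)a(\bn)^{-2\rho}$ lives on the noncompact chart $\bN$; the fix is the one you hint at --- work on the compact manifold $\cB\times\cB$ with a finite cover by translates of the big cell and a subordinate partition of unity, noting that the kernel is globally of the form $|\phi(x,y)|^{s(\lambda)}$ with $\phi$ real-analytic (a matrix coefficient of an extreme weight vector), so Atiyah's argument applies chart by chart. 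Third, to conclude that the continued family is $C^\infty(\cB)$-valued and still intertwines, you need the local zeta-function continuation to be uniform in the smooth parameters of the amplitude (so that derivatives in $x$ can be continued simultaneously), and then the intertwining relation follows by analytic continuation of an identity valid on the convergence half-plane; similarly, in the $K$-type variant the passage from pointwise meromorphy of $\eta_\mu(\lambda)$ to meromorphy in $C^\infty(\cB)$ requires locally uniform polynomial bounds in $\mu$, which the Gamma-ratio formulas of \cite{OP12,MS14} do provide but which must be stated. With these points filled in, your sketch is a correct and self-contained alternative to the citation of \cite{VW90}.
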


We now describe the spectrum of the intertwining operator $J(\lambda)$, i.e. its action on the $K$-types of $\pi_\lambda$. For this we first introduce some notation. Denote by $\wKL$ the irreducible unitary $L$-spherical representations $(\delta,V_\delta )$ of $K$. As $L$ is a symmetric subgroup it follows that
$\dim V_\delta^L=1$ for $\delta\in \wKL$.
We fix once and for all an $L$-fixed vector $e_\delta\in V_\delta$ with
$\|e_\delta \|=1$. Then we get a $K$-equivariant isometric embedding
\[\Phi_\delta : V_\delta \hookrightarrow L^2(\cB ), \quad \Phi_\delta (v)(k\cdot b_0) : =(\dim V_\delta )^{1/2} \ip{v}{\pi_\delta (k)e_\delta }\, .\]
We let 
\[L_\delta^2(\cB ):=\Im \Phi_\delta\, .\]

As $\cB$ is a symmetric space it follows that
\begin{equation}\label{de:etaDelta}
L^2(\cB)\simeq_K \bigoplus_{\delta\in \wKL} L^2_\delta (\cB)\simeq_K \bigoplus_{\delta\in \wKL}V_\delta
\end{equation}
where each of the representations $\delta\in \wKL$ occurs with multiplicity one.

The highest weights of the representations in $\wKL$ are given by the Cartan--Helgason-Theorem.
Fix a maximal abelian subspace $\fb\subseteq \fk\cap \fq$ and denote by $\Sigma\subseteq i\fb^*$ the (restricted) roots of $\fk_\C$ with respect to $\fb_\C$. Fix a
positive system $\Sigma^+$ in $\Sigma$ and let
\[\Lambda^+:=\{\mu \in i\fb^* \mid (\forall \alpha \in \Sigma^+)\,\, \frac{\langle \mu ,\alpha \rangle}{\langle \alpha ,\alpha \rangle }\in \N_0\}\, .\]
Then, according to \cite[p.\ 535]{H00}, the map $\pi \mapsto \,(\textrm{highest weight of $\pi$})$ defines an injective map of $\wKL$ into $\Lambda^+$.
This map is bijective if and only if $K$ is simply connected. In general $\wKL$ is isomorphic to a sublattice $\LB$ of $\Lambda^+$. For
$\mu\in\LB$ we denote by $\delta_\mu$ the corresponding spherical representation. We write $V_\mu$, $L_\mu^2(\cB)$ etc. for $V_{\delta_\mu}$, $L_{\delta_\mu}^2(\cB)$ etc.

\begin{Thm}\label{ThmJlambda}
For each $\mu\in\LB$ there exists a meromorphic function $\eta_\mu : \fa^*_\C\to \C$ such that
\[J(\lambda )|_{L^2_\mu (\cB)}=\eta_\mu (\lambda )\, {\id}_{L^2_\mu (\cB)}\, .\]
Moreover, for $\mu=0$ the function $\eta_0(\lambda)$ is given by
\[\eta_0(\lambda )=\int_{\overline N} a(\overline n)^{-\lambda -\rho}\, d\overline n \qquad (\Re (\lambda (X_0))>c)\]
and we have
\[J(-\lambda )\circ J(\lambda )=\eta_0(-\lambda )\eta_0(\lambda )\id\, .\]
\end{Thm}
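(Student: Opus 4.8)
The plan is to exploit that $J(\lambda)$ is, by construction, a $G$-intertwining operator $(\pi_\lambda, L^2(\cB)) \to (\pi_{-\lambda}^\theta, L^2(\cB))$, and that the $K$-isotypic decomposition \eqref{de:etaDelta} of $L^2(\cB)$ is multiplicity free. First I would observe that since $\theta(\Pmax) = G_0\exp(\fg_{-1})$ and $\theta$ fixes $K$, the composition $\pi_{-\lambda}^\theta = \pi_{-\lambda}\circ\theta$ restricted to $K$ agrees with $\pi_{-\lambda}|_K$; hence both $\pi_\lambda$ and $\pi_{-\lambda}^\theta$ have the same underlying $K$-representation, namely $\bigoplus_{\mu\in\LB} V_\mu$ with each $\delta_\mu$ appearing once. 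Because $J(\lambda)$ commutes with the $K$-action and each $L^2_\mu(\cB)$ is an irreducible $K$-submodule occurring with multiplicity one, Schur's lemma forces $J(\lambda)$ to act on $L^2_\mu(\cB)$ as a scalar $\eta_\mu(\lambda)$. Meromorphy of $\lambda \mapsto \eta_\mu(\lambda)$ follows from Theorem~\ref{thm:MeromorphicContinuationIntertwiningOperators}(2): fix any $v\in L^2_\mu(\cB)\cap C^\infty(\cB)$ with $\|v\|=1$, then $\eta_\mu(\lambda) = \ip{J(\lambda)v}{v}_{L^2(\cB)}$ is a meromorphic scalar function on $\fa_\C^*$.

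Next I would compute $\eta_0(\lambda)$ explicitly. The space $L^2_0(\cB)$ is spanned by the constant function $\1$ (the $K$-fixed vector), so from the integral formula \eqref{eq:IntertwinerIntegralOperator} in the induced picture, $J(\lambda)\1(e) = \int_{\bN} \1(\bn)\,a(\bn)^{-\lambda-\rho}\,d\bn = \int_{\bN} a(\bn)^{-\lambda-\rho}\,d\bn$, valid for $\Re(\lambda(X_0)) > c$; this is exactly the claimed formula for $\eta_0(\lambda)$, and it extends meromorphically by part (2). For the product formula, I would compose the two intertwining operators: $J(-\lambda)\circ J(\lambda)$ maps $(\pi_\lambda, C^\infty(\cB))$ to itself via $(\pi_{-\lambda}^\theta, C^\infty(\cB))$, and since $\theta^2 = \id$ the target is again $\pi_\lambda$. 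By Schur (or directly by the scalar action on each $K$-type), $J(-\lambda)\circ J(\lambda) = \eta_\mu(-\lambda)\eta_\mu(\lambda)\,\id$ on $L^2_\mu(\cB)$, and I claim this scalar is independent of $\mu$. To see this, note $J(-\lambda)J(\lambda)$ is a $G$-intertwining endomorphism of $\pi_\lambda$; for $\lambda$ in the open dense irreducibility set $U$ of the preceding theorem, irreducibility and Schur's lemma give that this endomorphism is a single scalar, which by evaluation on the $\mu=0$ type equals $\eta_0(-\lambda)\eta_0(\lambda)$. By meromorphy the identity $\eta_\mu(-\lambda)\eta_\mu(\lambda) = \eta_0(-\lambda)\eta_0(\lambda)$ then holds for all $\lambda$, giving $J(-\lambda)\circ J(\lambda) = \eta_0(-\lambda)\eta_0(\lambda)\,\id$.

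The main obstacle is justifying the interchange of the formal integral-operator manipulations with the rigorous meromorphic continuation: the formula $J(\lambda)\1 = \int_{\bN} a(\bn)^{-\lambda-\rho}\,d\bn$ is only literally an absolutely convergent integral for $\Re(\lambda(X_0)) > c$, and one must invoke Theorem~\ref{thm:MeromorphicContinuationIntertwiningOperators} to know that both sides continue meromorphically and agree; similarly the composition $J(-\lambda)\circ J(\lambda)$ is a priori only defined off the poles of each factor, so the scalar identity is first established on an open set and then propagated by the identity theorem for meromorphic functions. A secondary technical point is checking that $\pi_{-\lambda}^\theta$ really has $L^2_\mu(\cB)$ as its $K$-isotypic components — this is immediate once one notes $\theta|_K = \id_K$ is not quite right; rather $\theta|_K$ is an automorphism of $K$, but since $\theta(\Pmax) = G_0\exp(\fg_{-1})$ still contains $MA = G_0 \supseteq L$ and the induced $K$-module is $L^2(K/L)$ either way (as $\theta(L) = L$), the $K$-type decomposition is unchanged; I would spell this out carefully. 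Everything else reduces to Schur's lemma applied to the multiplicity-one decomposition \eqref{de:etaDelta}.
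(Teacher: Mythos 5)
Your argument is correct and takes essentially the same route as the paper: Schur's lemma applied to the multiplicity-one $K$-type decomposition \eqref{de:etaDelta} gives the scalar action $\eta_\mu(\lambda)$, evaluation on the spherical vector gives the integral formula for $\eta_0$, and the composition formula follows from generic irreducibility of $\pi_\lambda$ together with meromorphic continuation — exactly the two points the paper highlights before delegating the details to \cite{OP12}. One minor remark: since $K=G^\theta$, the restriction $\theta|_K$ genuinely is the identity, so the hedge in your final paragraph is unnecessary and $\pi_{-\lambda}^\theta|_K=\pi_{-\lambda}|_K$ holds on the nose.
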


\begin{proof}
The proofs are the same as in \cite[Theorem 2.6 and Lemma 3.1]{OP12}. We point out that the first statement follows from the multiplicity one decomposition in \eqref{de:etaDelta} and the second statement follows from the fact that $\pi_\lambda$ is irreducible for $\lambda$ in an open dense subset of $\fa_\C^*$.
\end{proof}

The explicit form of the functions $\eta_\mu(\lambda)$ was determined in \cite{OZ95,S93} for $G$ Hermitian, in \cite{S95,Z95} for $G$ non-Hermitian and $\Pmax$ and $\bP$ conjugate, in \cite{OP12} for the Grassmannians $\cB=\Gr_p(\K^{p+q})$, and in \cite{MS14} for the remaining cases.

\subsection{The complementary series}

We identify $\fa^*_\C\simeq\C$ by $\lambda \mapsto 2\lambda (X_0)$. In some cases there exists $R>0$ such that the representations $(\pi_\lambda,C^\infty(\cB))$ are irreducible and unitarizable for $\lambda\in(-R,R)$. Let $R$ be maximal with this property and put $R=0$ if such an interval does not exist.

In case $R>0$, the maximal parabolic subgroup $\Pmax$ and its opposite parabolic $\bP$ are conjugate. More precisely, there exists $w_0\in N_K(\fa)$ such that $\Ad(w_0)|_{\fa }=-1$. Then $w_0Nw_0^{-1}=\bN$ and hence $w_0\Pmax w_0^{-1}=\bP$. Define
$$ A(\lambda ):C^\infty(\cB)\to L^2(\cB), \quad A(\lambda)f(x) = J(\lambda )f(xw_0), $$
then $A(\lambda)$ intertwines $\pi_\lambda$ and $\pi_{-\lambda}$ and therefore, by Corollary~\ref{co:InterComp} the Hermitian form $(f,h)\mapsto\ip{A(\lambda)f}{h}_{L^2(\cB)}$ on $(\pi_\lambda,C^\infty\cB))$ is $G$-invariant for $\lambda \in \fa^*$. This form is positive definite if and only if $\lambda\in(-R,R)$ and in this case it defines a $G$-invariant inner product on $\pi_\lambda$, turning it into an irreducible unitary representation. These representations are called \textit{(degenerate) complementary series}.

The constants $R\geq0$ were obtained for all symmetric $R$-spaces in \cite{MS14,OZ95,S93,S95,Z95} and we summarize the results in Table~\ref{tab:ComplementarySeriesInterval}.

\begin{table}
\centering
\begin{tabular}[t]{|c|c|}
\hline
& $R$\\
\hline
A & $\begin{cases}2p&\mbox{if $p=q$,}\\0&\mbox{if $p\neq q$,}\end{cases}$\\
BD & $4$\\
C & $2n$\\
D & $\begin{cases}n&\mbox{if $n$ is even,}\\0&\mbox{if $n$ is odd,}\end{cases}$\\
E$_6$ & $0$\\
E$_7$ & $6$\\
A I & $\begin{cases}p&\mbox{if $p=q$,}\\0&\mbox{if $p\neq q$,}\end{cases}$\\
A II & $\begin{cases}p&\mbox{if $p=q$,}\\0&\mbox{if $p\neq q$.}\end{cases}$\\
A III & $\begin{cases}n&\mbox{if $n$ is odd}\\0&\mbox{if $n$ is even.}\end{cases}$\\
\hline
\end{tabular}
\begin{tabular}[t]{|c|c|}
\hline
& $R$\\
\hline
BD Ia & $\begin{cases}n&\mbox{if $n$ is even,}\\0&\mbox{if $n$ is odd,}\end{cases}$\\
BD Ib & $\begin{cases}0&\mbox{if $p-q\equiv2$ mod $4$,}\\1&\mbox{if $p-q\equiv1,3$ mod $4$,}\\2&\mbox{if $p-q\equiv0$ mod $4$,}\end{cases}$\\
BD Ic & $\so (n+1,1)$\\
C I & $\begin{cases}n/2&\mbox{if $n$ is even,}\\0&\mbox{if $n$ is odd.}\end{cases}$\\
C II & $3n$\\
D III & $n$\\
E I & $0$\\
E IV & $0$\\
E V & $3$\\
E VII & $3$\\
\hline
\end{tabular}
\caption{The complementary series interval $(-R,R)$}\label{tab:ComplementarySeriesInterval}
\end{table}

\begin{Ex}[The $\cos^\lambda$ transform] \label{ex:CosLambdaTransforms}
The intertwining operator $J(\lambda)$ in Section~\ref{sec:IntertwiningOperators} has a particularly nice interpretation for the Grassmainan $\cB=\Gr_p(\K^{p+q})$ (see \cite{OP12} for details). For simplicity we assume $\K=\R$ and $p\leq q$. We identify $\fa_\C^*\simeq\C$ such that $\rho=(p+q)/2$. We note that this normalization is different from the one used above, but more convenient in this particular example. Let $b,c\in\cB$ be $p$-planes in $\R^{n+1}$ and denote by $\pr_c$ the orthogonal
projection onto $c$. Choose any convex body $E\subset b$ of volume $1$ with $0\in E$ and define $|\mathrm{Cos}(b, c)|$ to be the volume of $\pr_c(E)$. Then we have (see \cite[Thm. 4.1]{OP12})
\[\alpha_\lambda ( b,c)=|\Cos (b,c)|^{\lambda - \rho}\,.\]
In particular, $|\mathrm{Cos}(b,c)|$ is independent of the chosen convex body $E$. Further, we obtain
\begin{equation}\label{eq:cosL}
J(\lambda )f(b)=\int_\cB |\Cos (b,c)|^{\lambda-\rho}f(c)\, dc\, .
\end{equation}

If $p=1$ and $u,v\in \rS^n$ determine the lines $b =\R u,c=\R v$, then $|\Cos (b,c)| =|\ip{u}{v}|=|\cos (\measuredangle (u,v))|$. Lifting $f\in C^(\cB)$ to an even function on the sphere we have
\begin{equation}\label{eq:cosL1}
J(\lambda )f(u)=\int_{\rS^n} |\ip {u}{v}|^{\lambda -\rho}f(v)\, dv=\int_{\rS^n} |\cos \measuredangle (u,v)|^{\lambda -\rho}f(v)\, dv\, .
\end{equation}
This is the motivation for calling the transform (\ref{eq:cosL}) the $\cos^\lambda$-transform. It is then denoted by $\cC^\lambda$ or $\cC^\lambda_{p,q}$. We also note that (up to a constant) the
residue at $\lambda -\rho = -1$ is the Funk--Radon transform 
\[Ff(u ) =\int_{\ip{u}{v}=0} f(v)\, dv\, .\]

The spectrum of the $\cos^\lambda$-transform was calculated in \cite{OP12}. We refer to \cite{OP12,OPR13} for extended references and the history, but only recall
the spectrum for the sphere, to avoid having to introduce too much notation that will not be used elsewhere. The irreducible representations in the decomposition of the sphere are given by the harmonic polynomials of degree $m=0, 1, \ldots $. Only the even degrees occur for the projective space $\cB$ and the corresponding eigenvalues are
\[\eta_{2m}(\lambda )=(-1)^m\frac{\Gamma ((n+1)/2)}{\Gamma (1/2)}
\, \frac{\Gamma\left(\frac{1}{2}(\lambda - \rho +1)\right)\Gamma\left(\frac{1}{2}(-\lambda +\rho)+m\right)}{\Gamma \left(\frac{1}{2}(-\lambda +\rho)\right)\Gamma\left(\frac{1}{2}(\lambda + \rho ) +m\right)}\, .\]

There exists an element $w_0\in N_K(\fa )$ such that $\Ad(w_0)|_{\fa}=-\id_{\fa}$ if and only if $p=q$. Here the intertwining operator $A(\lambda)$ has a
simple geometric interpretation. It is given by
\[A(\lambda )f(b)= \cC^\lambda f(b^\perp )\, .\]
This operator is known under the name $\sin^\lambda$-transform and denoted by
$\cS^\lambda$, see \cite{R13} for generalizations and further discussion. The $K$-spectrum of $\cS^\lambda$ for all Grassmanians was calculated
in
\cite[Lem. 6.3]{OP12}. For the real case the formula reduces to 
\[\cS^\lambda|_{L^2_{2m}(\cB)}=(-1)^m\eta_{2m}(\lambda)\cdot\id_{L^2_{2m}(\cB)}\, .\]
\end{Ex}

\section{The Berezin form}\label{Se:Berezin}

In the last section we saw how to construct a meromorphic family of $G$-invariant Hermitian forms on $C^\infty(\cB)$ in case there exists an element $w_0\in N_K(\fa)$ acting by $-1$ on $\fa$. However, in general such an element does not exists. For instance, in Example~\ref{ex:CosLambdaTransforms} we saw that for the Grassmannian $\cB=\Gr_p(\K^{p+q})$ there exists an element $w_0$ as above if and only if $p=q$. In this section we introduce the Berezin kernel $\beta_\lambda$ which allow us to define a meromorphic family of $H$-invariant
Hermitian forms on $C^\infty(\cB)$. The construction is motivated by the work of Hille~\cite{H99}, see also \cite{vDH97a,vDH97b,vDM98,vDM99, vDP99,FP05} for related work. In fact, the Berezin form we introduce is a special instances of Hille's Berezin form. In our situation $G/H$ is a non-compactly causal symmetric space and we only consider functions on $\cB$ and leave out the case of vector bundles. Our special context allows us to employ some tools specific to this situation and to simplify some of the proofs.

\subsection{The Berezin kernel}

For a function $f$ on $G$ or $\cB$ we define $\wtt_* f=f\circ\wtt$.

\begin{Def}
For $\l \in \mathfrak a_\C^*$ the \emph{Berezin operator} $B(\l)$ is the linear operator on $C^\infty(\mathcal B)$ defined by 
\begin{equation}\label{eq:BerezinOp}
B(\l)=\wt\tau_* \circ J(\l)
\end{equation}
\end{Def}

The Berezin operator $B(\lambda)$ is an integral operator
\[B(\lambda)f(x) = \int_{\cB} \beta_\lambda (x,y) f (y)\, dy\, .\] 
and we call its integral kernel $\beta_\lambda:\cB\times\cB\to\C$ the \textit{Berezin-kernel}. It follows from \eqref{eq:IntertwinerIntegralOperator} and the fact that $\wtt(k)=\tau(k)$ ($k\in K$) that the Berezin kernel is given by
\begin{equation}
\beta_\l(x,y)=\alpha_\lambda (\wtt(x),y) =\alpha\left(\tau(h)^{-1}k\right)^{\l-\rho}\label{eq:BerezinKernel}
\end{equation}
for $x=h\cdot b_0,y=k\cdot b_0\in\cB$.

The canonical Hermitian form $\innerl{\cdot}{\cdot}$ on $C^\infty(\cB)$ associated with $B(\l)$ is defined by  
\begin{equation} \label{eq:canonicalHermitian}
\ip {f}{h}_\lambda:=\ip{B(\lambda)f}{h}_{L^2(\cB)} =\int_{\cB}\int_{\cB}\beta_\lambda (x,y)f(x)\overline{h(y)}\,dx\,dy
\end{equation}
and called the \textit{Berezin form}. For $\Re(\lambda(X_0))>c$ it is given by the convergent integral and extended by meromorphic continuation to $\lambda\in\fa_\C^*$. More precisely, for fixed $f,h\in C^\infty(\cB)$ the expression $\ip{f}{h}_\lambda$ is meromorphic in $\lambda$. To show that the Berezin form is in fact $H$-invariant we need the following lemma:
 
\begin{Lemma} \label{lemma:wttau-pil}
For all $\l \in \mathfrak a_\C^*$ we have as operators on $L^2(\mathcal B)$:
\begin{equation}
\wt\tau_* \circ \pi^\theta_{\l}(g)=\pi_\l(\tau(g)) \circ \wt\tau_*\,.
\end{equation}
\end{Lemma}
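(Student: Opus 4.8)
The plan is to verify the operator identity by unwinding the definitions and tracking how $\wt\tau$ interacts with the $KMAN$-decomposition. Recall $\wtt=\tau\theta$ and that by Lemma~\ref{lem:InvolutionsOnPmax} we have $\wtt(\Pmax)=\Pmax$, so $\wtt$ descends to a diffeomorphism of $\cB=G/\Pmax$; concretely, since $\wtt(k)=\tau(k)$ for $k\in K$ and $\wtt$ preserves $L$, the action on $\cB\simeq K/L$ is $\wtt(k\cdot b_0)=\tau(k)\cdot b_0$. First I would write both sides applied to $f\in L^2(\cB)$ at a point $b=k\cdot b_0\in\cB$. The left-hand side gives
\[
(\wt\tau_*\circ\pi_\l^\theta(g)f)(b) = (\pi_\l^\theta(g)f)(\wtt(b)) = (\pi_\l(\theta(g))f)(\wtt(b)) = j_\l(\theta(g)^{-1},\wtt(b))\,f\!\left(\theta(g)^{-1}\cdot\wtt(b)\right).
\]
The right-hand side gives
\[
(\pi_\l(\tau(g))\circ\wt\tau_* f)(b) = j_\l(\tau(g)^{-1},b)\,(\wt\tau_* f)(\tau(g)^{-1}\cdot b) = j_\l(\tau(g)^{-1},b)\,f\!\left(\wtt(\tau(g)^{-1}\cdot b)\right).
\]

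So the identity reduces to two assertions: the "base-point" identity $\theta(g)^{-1}\cdot\wtt(b) = \wtt(\tau(g)^{-1}\cdot b)$ in $\cB$, and the matching of cocycles $j_\l(\theta(g)^{-1},\wtt(b)) = j_\l(\tau(g)^{-1},b)$. For the first, note $\wtt\circ\tau = \theta$ on $G$ (since $\wtt\tau=\tau\theta\tau=\theta$, using that $\tau,\theta$ commute and are involutions), hence $\wtt(\tau(g)^{-1}\cdot b)=(\wtt\tau(g^{-1}))\cdot\wtt(b)=\theta(g^{-1})\cdot\wtt(b)=\theta(g)^{-1}\cdot\wtt(b)$, using that $\wtt$ is an automorphism of $G$ acting compatibly on $\cB$. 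This is immediate. For the cocycle identity I would use the definition $j_\l(g,k\cdot b_0)=a(gk)^{-\l-\rho}$ and the fact that $\wtt$ normalizes $MAN$ and fixes $A$ pointwise (Lemma~\ref{lem:InvolutionsOnPmax} gives $\wtt(\Pmax)=\Pmax$, and $A=\exp(\R X_0)$ with $\wtt(X_0)=X_0$, so $\wtt|_A=\id$). Writing $\wtt(b)=\tau(k)\cdot b_0$, one needs $a(\theta(g)^{-1}\tau(k)) = a(\tau(g)^{-1}k)$; applying the automorphism $\wtt$ to the element $\tau(g)^{-1}k\in KMAN$ and using $\wtt\tau=\theta$ on $G$, $\wtt(K)=K$ (as $\wtt$ commutes with $\theta$), $\wtt(MAN)=MAN$ and $\wtt|_A=\id$, the $A$-component is unchanged, giving exactly the required equality.

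The main obstacle — really the only subtlety — is bookkeeping: making sure the cocycle $j_\l$ is evaluated at the correct $K$-representative and that one is consistently using $\wtt|_A=\id$ together with $\wtt$ preserving (rather than swapping) $N$ and $\bN$. Note the appearance of $\pi_\l^\theta=\pi_\l\circ\theta$ on the left, not $\pi_{-\l}$, is precisely what makes the $A$-exponents match; had the target been $\pi_{-\l}$ this would fail. I expect the cleanest writeup to: (i) record $\wtt\tau=\theta$ as an identity of automorphisms of $G$; (ii) deduce the base-point identity in $\cB$; (iii) apply $\wtt$ to a $KMAN$ factorization and read off that the $a$-component is $\wtt$-invariant because $\wtt(K)=K$, $\wtt(M)=M$, $\wtt|_A=\id$, $\wtt(N)=N$; (iv) combine. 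Each step is a one-line consequence of the structure theory already assembled in Section~\ref{sec:NCCSymmetricSpaces} and Lemma~\ref{lem:InvolutionsOnPmax}, so no genuinely hard estimate or new idea is needed.
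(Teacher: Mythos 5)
Your proposal is correct and follows essentially the same route as the paper: both arguments unwind the two sides pointwise, use $\wtt\tau=\theta$ to identify the group elements, and use that $\wtt$ preserves $K$, $MN$ and fixes $\fa$ (so the $a$-projection, hence the cocycle $j_\lambda$, is $\wtt$-invariant) to match the factors $j_\lambda(\theta(g)^{-1},\wtt(b))=j_\lambda(\tau(g)^{-1},b)$ and $\theta(g)^{-1}\cdot\wtt(b)=\wtt(\tau(g)^{-1}\cdot b)$. The only cosmetic difference is that the paper states the invariances $j_\lambda(\wtt(g),\wtt(b))=j_\lambda(g,b)$ and $\wtt(g\cdot b)=\wtt(g)\cdot\wtt(b)$ up front and then does one chain of equalities, whereas you split the verification into the base-point and cocycle identities.
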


\begin{proof}
We note first that $a (\wtt(g))=a(g)$ ($g\in G$) as $\wtt(K)=K$, $\wtt(MN)=MN$ and $\wtt|_\fa=\id_\fa$ by Lemma~\ref{lem:InvolutionsOnPmax}. Hence, $j_\lambda(\wtt(g),\wtt(b))=j_\lambda(g,b)$ for $g\in G$, $b\in\cB$. By the same argument $\wtt(g\cdot b)=\wtt(g)\cdot\wtt(b)$ for $g\in G$, $b\in\cB$. Let $f \in C^\infty(\mathcal B)$, then for all $g\in G$ and $b\in\cB$ we have
\begin{align*}
(\wt\tau_*\circ\pi^\theta_{\l}(g))f(b) &= (\pi^\theta_{\l}(g)f)(\wtt(b))\\ 
&=j_\lambda(\theta(g)^{-1},\wtt(b)) f\big(\theta(g)^{-1}\cdot\wtt(b)\big)\\
&=j_\lambda(\tau(g)^{-1},b) f\big(\wtt_*(\tau(g)^{-1}\cdot b)\big)\\
&=(\pi_\l(\tau(g))\circ\wtt_*)f(b)\,.
\end{align*}
Since $f$ was arbitrary, this shows the claim.
\end{proof}
  
\begin{Prop}\label{prop 4.3}
Then for all $f, h \in C^\infty(\mathcal B)$ we have, as an identity of meromorphic functions of $\l\in\fa_\C^*$:
\begin{equation}
\innerl{\pi_\l(g)f}{h}=\innerl{f}{\pi_{\overline{\l}}(\tau(g)^{-1})h}\,.
\end{equation}
In particular, for $\lambda\in\fa^*$ the Berezin form $\innerl{\cdot}{\cdot}$ is $\pi_\l(H)$-invariant.
\end{Prop}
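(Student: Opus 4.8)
The plan is to combine Theorem~\ref{th:dual}, which gives the duality for the $L^2$-pairing between $\pi_\l$ and $\pi^\theta_{-\overline\l}$ (observe that $\pi_{-\overline\l}(g^{-1})=\pi^\theta_{-\overline\l}(\theta(g)^{-1})$ after relabelling, so one should read Theorem~\ref{th:dual} as $\ip{\pi_\l(g)f}{h}_{L^2(\cB)}=\ip{f}{\pi^\theta_{-\overline\l}(g^{-1})h}_{L^2(\cB)}$ up to the $K$-invariance of the measure), with the intertwining property of $J(\l)$ from Theorem~\ref{thm:MeromorphicContinuationIntertwiningOperators}(i), namely $J(\l)\circ\pi_\l(g)=\pi^\theta_{-\l}(g)\circ J(\l)$, and with the commutation relation of $\wt\tau_*$ with the representation established in Lemma~\ref{lemma:wttau-pil}.

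\textbf{Key steps.} First I would work in the region $\Re(\l(X_0))>c$ where all integrals converge absolutely, so that all manipulations are legitimate, and then invoke meromorphic continuation in $\l$ to extend the resulting identity of scalar functions to all of $\fa_\C^*$. Starting from $\innerl{\pi_\l(g)f}{h}=\ip{B(\l)\pi_\l(g)f}{h}_{L^2(\cB)}=\ip{\wt\tau_*J(\l)\pi_\l(g)f}{h}_{L^2(\cB)}$, I would push $\pi_\l(g)$ through $J(\l)$ using the intertwining property to get $\wt\tau_*\,\pi^\theta_{-\l}(g)\,J(\l)f$, then apply Lemma~\ref{lemma:wttau-pil} with $\l$ replaced by $-\l$ to rewrite $\wt\tau_*\circ\pi^\theta_{-\l}(g)=\pi_{-\l}(\tau(g))\circ\wt\tau_*$, yielding $\ip{\pi_{-\l}(\tau(g))\,\wt\tau_*J(\l)f}{h}_{L^2(\cB)}=\ip{\pi_{-\l}(\tau(g))B(\l)f}{h}_{L^2(\cB)}$. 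Now I would move $\pi_{-\l}(\tau(g))$ onto the second argument via the $L^2$-duality of Theorem~\ref{th:dual}: $\ip{\pi_{-\l}(\tau(g))u}{h}_{L^2(\cB)}=\ip{u}{\pi^\theta_{\overline\l}(\tau(g)^{-1})h}_{L^2(\cB)}$, where I should be careful that $-\overline{(-\l)}=\overline\l$. It remains to identify $\pi^\theta_{\overline\l}(\tau(g)^{-1})$ acting on the right with $\pi_{\overline\l}(\tau(g)^{-1})$ in the Berezin form; this is exactly the content of how $\wt\tau_*=\theta_*\circ\tau_*$-type bookkeeping interacts with $B(\overline\l)=\wt\tau_*\circ J(\overline\l)$ and the relation $\tau=\theta\wt\tau$, so that $\ip{B(\l)f}{\pi^\theta_{\overline\l}(\tau(g)^{-1})h}_{L^2(\cB)}$ rearranges (using symmetry of the kernel $\alpha_\l$ and $\wtt(K)=K$) into $\ip{B(\overline\l)^*$-free form$\ip{f}{\pi_{\overline\l}(\tau(g)^{-1})h}_{\overline\l}$; more cleanly, one checks directly that $\overline{\ip{h}{f}_{\overline\l}}=\ip{f}{h}_\l$ from \eqref{eq:BerezinKernel} and the symmetry of $\alpha_\l$, and then applies the already-proved half of the identity with $f$ and $h$ swapped and $\l$ conjugated. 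Finally, for $\l\in\fa^*$ (so $\overline\l=\l$) and $g=h\in H$ we have $\tau(h)^{-1}=h^{-1}$, giving $\innerl{\pi_\l(h)f}{h'}=\innerl{f}{\pi_\l(h^{-1})h'}$, i.e.\ $\pi_\l(H)$-invariance.

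\textbf{Main obstacle.} The routine part is the chain of substitutions; the delicate bookkeeping — and the step I expect to be the real obstacle — is keeping the four a priori different representations $\pi_\l$, $\pi_{-\l}$, $\pi^\theta_{-\l}$, $\pi_{\overline\l}$ straight and correctly tracking the conjugation $\l\mapsto\overline\l$ and the involutions $\theta$, $\tau$, $\wt\tau$ through both Theorem~\ref{th:dual} and Lemma~\ref{lemma:wttau-pil}, in particular verifying that the $\theta$ appearing on the "dual" side of Theorem~\ref{th:dual} is precisely absorbed by the $\wt\tau_*$ in $B(\l)$ via $\tau=\theta\wt\tau$, so that no stray $\theta$ survives. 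A clean way to avoid sign errors is to prove first the symmetry relation $\overline{\innerl[\overline\l]{h}{f}}=\innerl{f}{h}$ directly from the explicit kernel \eqref{eq:BerezinKernel} together with $\overline{\alpha(\tau(k_2)^{-1}k_1)^{\overline\l-\rho}}=\alpha(\tau(k_1)^{-1}k_2)^{\l-\rho}$ (using $\tau(K)=K$, $\alpha(\theta(g))=\alpha(g)^{-1}$ and the symmetry of $\alpha_\l$), and then only needing to establish the "left" version $\innerl{\pi_\l(g)f}{h}=\innerl{f}{\pi_{\overline\l}(\tau(g)^{-1})h}$, which follows from the three quoted results as above; the "$H$-invariance" is then immediate.
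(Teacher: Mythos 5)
Your overall route is the paper's: work first in the half-plane $\Re(\lambda(X_0))>c$ where everything converges, write $\innerl{\pi_\l(g)f}{h}=\ip{\wtt_*J(\l)\pi_\l(g)f}{h}_{L^2(\cB)}$, move $\pi_\l(g)$ across $J(\l)$ via Theorem~\ref{thm:MeromorphicContinuationIntertwiningOperators}, commute $\wtt_*$ past $\pi^\theta_{-\l}(g)$ via Lemma~\ref{lemma:wttau-pil} (with $\l$ replaced by $-\l$), transfer $\pi_{-\l}(\tau(g))$ to the second slot via Theorem~\ref{th:dual}, and conclude by meromorphic continuation; the $H$-invariance for real $\l$ is then immediate. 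Up to the final transfer step this is exactly the proof in the paper.

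The problem is precisely that final step, and it comes from your ``rereading'' of Theorem~\ref{th:dual}. As stated, the theorem contains no $\theta$: $\ip{\pi_\l(g)f}{h}_{L^2(\cB)}=\ip{f}{\pi_{-\overline\l}(g^{-1})h}_{L^2(\cB)}$. Applying it with $\l$ replaced by $-\l$ and $g$ by $\tau(g)$, and using $-\overline{(-\l)}=\overline\l$, gives at once $\ip{\pi_{-\l}(\tau(g))\,\wtt_*J(\l)f}{h}_{L^2(\cB)}=\ip{\wtt_*J(\l)f}{\pi_{\overline\l}(\tau(g)^{-1})h}_{L^2(\cB)}=\innerl{f}{\pi_{\overline\l}(\tau(g)^{-1})h}$, which is the assertion. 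Your claim that the theorem may be read as a duality between $\pi_\l$ and $\pi^\theta_{-\overline\l}$ with the same group element $g^{-1}$ is false: it would force $\pi_{-\overline\l}(g^{-1})=\pi_{-\overline\l}(\theta(g)^{-1})$ for all $g\in G$, which already fails for $g\in A$. Consequently the $\theta$ in your $\pi^\theta_{\overline\l}(\tau(g)^{-1})$ is spurious, and the passage you devote to removing it does not close: the displayed rearrangement is garbled and lands on the Berezin form at parameter $\overline\l$ rather than $\l$, while the ``cleaner'' fallback invokes the ``left version'' $\innerl{\pi_\l(g)f}{h}=\innerl{f}{\pi_{\overline\l}(\tau(g)^{-1})h}$ --- precisely the identity to be proved --- so it is circular. (The Hermitian symmetry $\overline{\ip{h}{f}_{\overline\l}}=\innerl{f}{h}$ you propose is true, using $a(\wtt(g))=a(g)$ and the symmetry of $\alpha_\l$, but it is not needed.) Quoting Theorem~\ref{th:dual} verbatim removes the obstacle you anticipated --- there is no stray $\theta$ to absorb --- and turns your outline into the paper's proof.
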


\begin{proof}
The proof is similar to the proof of \cite[Proposition 3.1.4~(i)]{H99}. First of all, it is sufficient to prove the identity for $\Re(\lambda(X_0))>c$ so that the integral defining $B(\lambda )$ converges absolutely, then the general statement follows by meromorphic continuation. By Theorem~\ref{th:dual} and \ref{thm:MeromorphicContinuationIntertwiningOperators} and Lemma~\ref{lemma:wttau-pil} we have
\begin{align*}
\ip{\pi_\l(g)f}{h}_\lambda&=\ip{(\wtt_*\circ J(\l)\circ\pi_{\l}(g))f}{h}_{L^2(\cB)} \\
&=\ip{(\wt\tau_*\circ\pi^\theta_{-\l}(g)\circ J(\l))f}{h}_{L^2(\cB)}\\
&=\ip{(\pi_{-\l}(\tau(g))\circ\wtt_*\circ J(\l))f}{h}_{L^2(\cB)}\\
&=\ip{(\wt\tau_*\circ J(\l))f}{\pi_{\overline\l}(\tau(g)^{-1})h}_{L^2(\cB)}\\
&=\ip{f}{\pi_{\overline\l}(\tau(g)^{-1})h}_\lambda
\end{align*}
and the proof is complete.
\end{proof}

\subsection{The non-compact picture}\label{sec:BerezinNonCptPicture}

We finally express of the Berezin form in the non-compact picture. For this we introduce the kernel
\begin{equation}\label{def:kappaL}
 \kappa_\lambda:\bN\times\bN\to\C, \quad \kappa_\lambda (x,y)=\alpha (\tau (x)^{-1}y)^{\lambda -\rho}\, .
\end{equation}
Further, recall the isomorphism $L^2 (\cB )\simeq L^2 (\bN, a(\bn )^{2\Re \lambda }d\bn),\,f\mapsto f_\lambda$ from \eqref{eq:f-lambda}.

\begin{Lemma}\label{le:4.4}
Let $f,h \in C^\infty(\mathcal B)$ and $\lambda\in\fa^*_\C$, then
\[ \langle f,h\rangle_\lambda = \int_{\overline{N}} \int_{\overline{N}} \kappa_\lambda (x,y) f_\lambda (x)  
\overline{h_\lambda(y)} \; dx\, dy \,.\]
\end{Lemma}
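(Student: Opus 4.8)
The plan is to start from the defining double integral \eqref{eq:canonicalHermitian}, $\langle f,h\rangle_\lambda=\int_\cB\int_\cB\beta_\lambda(x,y)f(x)\overline{h(y)}\,dx\,dy$, and transport both integrations to $\bN$ by means of the integral formula \eqref{eq:IntFormulaK} for the dense embedding $\bn\mapsto\bn\cdot b_0$. It is enough to argue on the half-space $\Re(\lambda(X_0))>c$, where the integral converges absolutely, the general case following by meromorphic continuation in $\lambda$ for fixed $f,h$ (for $\lambda\in\fa^*$, the case of interest, the conjugate factor below is unambiguous). Writing $x=\bn_1\cdot b_0$ and $y=\bn_2\cdot b_0$, \eqref{eq:IntFormulaK} replaces $dx$ by $a(\bn_1)^{-2\rho}\,d\bn_1$ and $dy$ by $a(\bn_2)^{-2\rho}\,d\bn_2$, while \eqref{eq:f-lambda} gives $f(\bn_1\cdot b_0)=a(\bn_1)^{\lambda+\rho}f_\lambda(\bn_1)$ and $\overline{h(\bn_2\cdot b_0)}=a(\bn_2)^{\lambda+\rho}\overline{h_\lambda(\bn_2)}$ (using that $\rho$ is real). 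After collecting exponents, the powers of $a(\bn_i)$ that survive next to $f_\lambda$, $\overline{h_\lambda}$ are exactly $a(\bn_1)^{\lambda-\rho}a(\bn_2)^{\lambda-\rho}$, so the lemma reduces to the pointwise kernel identity
\[
\beta_\lambda(\bn_1\cdot b_0,\bn_2\cdot b_0)\,a(\bn_1)^{\lambda-\rho}\,a(\bn_2)^{\lambda-\rho}=\kappa_\lambda(\bn_1,\bn_2),\qquad\bn_1,\bn_2\in\bN.
\]

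For this identity I evaluate $\beta_\lambda$ using \eqref{eq:BerezinKernel} with the $K$-representatives $k(\bn_i)$ of the two base points, so that $\beta_\lambda(\bn_1\cdot b_0,\bn_2\cdot b_0)=\alpha\bigl(\tau(k(\bn_1))^{-1}k(\bn_2)\bigr)^{\lambda-\rho}$. Since raising to the power $\lambda-\rho$ is a character of the abelian group $A$, it suffices to prove the $A$-valued identity
\[
\alpha\bigl(\tau(\bn_1)^{-1}\bn_2\bigr)=a(\bn_1)\cdot\alpha\bigl(\tau(k(\bn_1))^{-1}k(\bn_2)\bigr)\cdot a(\bn_2).
\]
Write $\bn_i=k(\bn_i)p_i$ with $p_i=m(\bn_i)a(\bn_i)n(\bn_i)\in\Pmax$; then
\[
\tau(\bn_1)^{-1}\bn_2=\tau(p_1)^{-1}\cdot\tau(k(\bn_1))^{-1}k(\bn_2)\cdot p_2,
\]
where $\tau(p_1)^{-1}\in\tau(\Pmax)=\bP=MA\bN$ by Lemma~\ref{lem:InvolutionsOnPmax} and $p_2\in\Pmax$.

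Now use the two elementary transformation rules for the $\bN MAN$-projection $\alpha$: if $q\in\bP=MA\bN$ then $\alpha(qg)$ equals $\alpha(g)$ multiplied by the $A$-component of $q$, and if $p\in\Pmax=MAN$ then $\alpha(gp)$ equals $\alpha(g)$ multiplied by the $A$-component of $p$; both follow at once from the commutativity of $MA$ and the normality of $N\subseteq\Pmax$ and of $\bN\subseteq\bP$. This reduces the claim to the $A$-components of $\tau(p_1)^{-1}$ and of $p_2$. The latter is $a(\bn_2)$ by construction. For the former, the $A$-projections $\Pmax\to A$ and $\bP\to A$ are group homomorphisms onto the connected group $A$, and $\tau$ restricts to an isomorphism $\Pmax\to\bP$ that acts as $\theta$ on $\fg_0$ (because $\wtt|_{\fg_0}=\id$) and carries $\fn=\fg_1$ into $\overline\fn=\fg_{-1}$; since $\theta|_\fa=-\id$, this forces the $A$-component of $\tau(p)$ to be the inverse of the $A$-component of $p$ for every $p\in\Pmax$, whence the $A$-component of $\tau(p_1)^{-1}$ equals $a(\bn_1)$. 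Substituting back gives the $A$-valued identity, and raising to the power $\lambda-\rho$ yields the kernel identity and the lemma.

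The step I expect to be the main obstacle is precisely this $A$-component bookkeeping. The delicate point is that the closed expression \eqref{eq:BerezinKernel} for $\beta_\lambda$ is valid only for $K$-representatives of the base points, so replacing $\bn_i$ by $k(\bn_i)$ necessarily introduces the factors $a(\bn_i)^{\lambda-\rho}$; these are exactly what the Jacobians from \eqref{eq:IntFormulaK} and the section relations \eqref{eq:f-lambda} leave uncancelled, and the two sides agree only because of the $A$-valued identity above, which in turn rests on $\tau$ interchanging $\Pmax$ and $\bP$ and inverting $A$. The remaining ingredients (the two changes of variables, the rewriting via $f_\lambda$, and the meromorphic continuation) are routine.
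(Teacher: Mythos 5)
Your proposal is correct and takes essentially the same route as the paper: transfer both integrals to $\bN$ via \eqref{eq:IntFormulaK}, evaluate $\beta_\lambda$ on the $K$-parts $k(\bn_i)$, and use the left $M\bN$-/right $MN$-invariance of $\alpha$ together with $\tau(\Pmax)=\bP$ and $\tau(a)=a^{-1}$ on $A$ to produce exactly the factors $a(\bn_i)^{\lambda-\rho}$ that cancel against the Jacobians and the definition of $f_\lambda,h_\lambda$. The only difference is cosmetic: where you package the bookkeeping as an $A$-valued identity and a homomorphism argument for the $A$-component of $\tau(p)$, the paper substitutes $k(x)=xm(x)^{-1}a(x)^{-1}\widetilde n(x)$ directly and invokes $\tau(a(x))=a(x)^{-1}$.
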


\begin{proof}
By \eqref{eq:IntFormulaK} we have
$$ \langle f,h\rangle_\lambda = \int_{\bN}\int_{\bN} \beta_\lambda(x\cdot b_0,y\cdot b_0)f(x\cdot b_0)\overline{h(y\cdot b_0)}a(x)^{-2\rho}a(y)^{-2\rho}\,dx\,dy. $$
Write $x=k(x)m(x)a(x)n(x)$, then $k(x)=xm(x)^{-1}a(x)^{-1}\widetilde{n}(x)$ for some $\widetilde{n}(x)\in N$. With the same notation for $y$ we obtain
\begin{multline*}
 \beta_\lambda(x\cdot b_0,y\cdot b_0) = \alpha(\tau(k(x))^{-1}k(y))^{\lambda-\rho}\\
 = \alpha(\tau(\widetilde{n}(x))^{-1}\tau(a(x))\tau(m(x))\tau(x)^{-1}ym(y)^{-1}a(y)^{-1}\widetilde{n}(y))^{\lambda-\rho}.
\end{multline*}
Now $\tau(N)=\bN$ and the function $\alpha$ is left $M\overline{N}$-invariant and right $MN$-invariant. Further, $\tau(a(x))=a(x)^{-1}$, so that the above expression is equal to
$$ a(x)^{-\lambda+\rho}a(y)^{-\lambda+\rho}\kappa_\lambda(x,y). $$
By the definition of $f_\lambda$ and $h_\lambda$ this gives the desired expression.
\end{proof}

\section{The restriction of the Berezin form to an open $H$-orbit}\label{se:openOrb}

In order to study the restriction of the Berezin form to the open $H$-orbits in $\cB$ we first describe these orbits using roots of $\fg$. It turns out that each $H$-orbits is a symmetric space and we determine the involution explicitly. We illustrate the orbit decomposition with the example of the Grassmanians $\cB=\Gr_p(\R^{p+q})$. Finally, we write the Berezin form as a sum over integrals over the open $H$-orbits. 

\subsection{The open $H$-orbits in $\cB$}\label{sec:OpenHorbits}

We refer to \cite{HO97,K87,NO00,O91} for the discussion about root systems and Weyl groups related to non-compactly causal spaces. Let $\amin$ be a maximal abelian subspace of $\fp$ containing $X_0$. Then $\amin\subset \fz_\fp (X_0)\subseteq \fp\cap \fq$. Denote by $\Sigma$ the set of roots of $\amin$ in $\fg$. Let $\Sigma_0=\{\alpha \in\Sigma \mid \alpha (X_0)=0\}$ and $\Sigma_{\pm}=\{\alpha \in \Sigma\mid \alpha (X_0)=\pm 1\}$, then 
\[ \Sigma_0=\{\alpha \in \Sigma \mid \fg_\alpha \subset \fg_0\}, \qquad \mbox{and} \qquad \Sigma_\pm=\{\alpha \in \Sigma\mid \fg_\alpha \subset \fg_{\pm1}\}.\]
Furthermore
\[ \fn =\bigoplus_{\alpha \in\Sigma_+}\fg_\alpha\quad\text{ and }\quad \overline \fn =\bigoplus_{\alpha \in\Sigma_-}\fg_\alpha. \] 

Let $\W =W(\amin )=N_K(\amin )/Z_K(\amin)$ and $W_{H\cap K}=W_{H\cap K}(\amin )=N_L(\amin )/Z_L(\amin)$. Note that, by the definition of $L$, we have $Z_K(\amin) =Z_L(\amin)$. Then $\W$ is the Weyl group generated by the
reflections $s_\alpha $ ($\alpha\in\Sigma$) and $W_{H\cap K}$ is the Weyl group generated by $s_\alpha$ ($\alpha \in
\Sigma_0$), i.e. $\W=W(\Sigma)$ and $W_{H\cap K}=W(\Sigma_0)$.
We choose a set of positive roots $\Sigma^+\subseteq\Sigma$ such that $\Sigma_+\subseteq \Sigma^+$. Then $\Sigma^+=\Sigma_0^+\dot\cup \Sigma_+$ with $\Sigma_0^+=\Sigma_0\cap\Sigma^+$ a positive system in $\Sigma_0$. We note that $\WH (\Sigma_+)=\Sigma_+$. Let $\fn_0=
\bigoplus_{\alpha \in \Sigma_0^+} \fg_\a \subset \fg_0$, then
\[\nmin =\bigoplus_{\alpha \in \Sigma^+}\fg_\alpha =\fn_0\ltimes \fn. \]
 
 Two roots $\alpha,\beta\in\Sigma^+$ are called \textit{strongly orthogonal} if $\alpha \not\in \R\beta $ and $\alpha \pm \beta\not\in\Sigma$. If $\alpha$ and
 $\beta$ are strongly orthogonal then they are orthogonal. Let $\alpha_1,\ldots ,\alpha_{r}\in \Sigma_+$ be a maximal set of long strongly orthogonal
 roots. For $j=1,\ldots,r$ let $E_j \in \fg_{\alpha_j}$ and $F_j=\tau (E_j)=-\theta (E_j)\in\fg_{-\alpha_j}$ such that with $H_j =[E_j,F_j]$ the
 map
 \[E_j \mapsto \begin{pmatrix} 0 & 1 \\ 0 & 0\end{pmatrix},\quad F_j \mapsto \begin{pmatrix} 0 & 0 \\ 1 & 0\end{pmatrix},\quad
 \text{and} \quad H_j \mapsto \begin{pmatrix} 1 & 0 \\ 0 & -1\end{pmatrix}\]
 is an isomorphism   
$ \R F_j \oplus \R H_j \oplus\R E_j\simeq \sl(2,\R)$
intertwining the involutions $\tau$ and $\theta$ with the involutions on $\sl (2,\R)$ given by conjugation by $E_{12}+E_{21}$ and
the standard Cartan involution $X\mapsto -X^t$. 
Let
$$ s^\prime_{\alpha_j}=\exp \left(\frac{\pi}{2}(E_j-F_j)\right), $$
then $\Ad (s^\prime_{\alpha_j})=s_{\alpha_j}$ is the
Weyl group element corresponding to the reflection in the hyperplane $\alpha_j=0$. 
Note that $s^\prime_{\alpha_i}$ and $s^\prime_{\alpha_j}$ commute because $[E_i,E_j]=[F_i,E_j]
=[E_i,F_j]=[F_i,F_j]=0$. Furthermore, as $\alpha_i(X_0)=1$ and $\alpha_i(H_i)=2$,
\[s_{\alpha_i}(X_0)=X_0-\frac{1}{\ip{\alpha_i}{\alpha_i}}H_i\]
and 
\begin{align*}
 s_{\alpha_i}^2(X_0) &=s_{\alpha_i}\left(X_0-\frac{1}{\ip{\alpha_i}{\alpha_i}}H_i\right)\\
 &=X_0-\frac{1}{\ip{\alpha_i}{\alpha_i}}H_i-\left(\frac{1}{\ip{\alpha_i}{\alpha_i}}H_i-
\frac{2}{{\ip{\alpha_i}{\alpha_i}}}H_i\right)=X_0\,.
\end{align*}
Thus $(s^\prime_{\alpha_i})^2\in L$.

Define
\[ w_j=s_{\alpha_1}\cdots s_{\alpha_j}\in W(\Sigma),\quad j=0,\ldots ,r\,,\]
and the corresponding representatives
\[ \tilde{w}_j=s_{\alpha_1}'\cdots s_{\alpha_j}'\in K,\quad j=0,\ldots ,r\,.\]

We will use the following two standard results:

\begin{Lemma}[{see \cite[Lemma 5.4~(1)]{T79}}]
The set $\{w_0,\dots ,w_r\}$ is a set of representatives of $W(\Sigma_0)\backslash W(\Sigma)/W(\Sigma_0)$, i.e., 
\[W(\Sigma)=\dot\bigcup_{j=0}^{r}  W(\Sigma_0)w_j W(\Sigma_0)\, .\] 
\end{Lemma}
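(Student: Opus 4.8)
The plan is to translate the enumeration of double cosets into the enumeration of $W(\Sigma_0)$-orbits on a single $W(\Sigma)$-orbit. First I would apply Chevalley's lemma: the stabilizer of a vector in a finite reflection group is generated by the reflections fixing it. Since $s_\alpha$ fixes $X_0$ precisely when $\alpha(X_0)=0$, i.e.\ $\alpha\in\Sigma_0$, this gives $\Stab_{W(\Sigma)}(X_0)=W(\Sigma_0)$. Hence $w\mapsto w\cdot X_0$ induces a $W(\Sigma)$-equivariant bijection $W(\Sigma)/W(\Sigma_0)\simeq\mathcal{O}:=W(\Sigma)\cdot X_0\subseteq\amin$, which carries $W(\Sigma_0)\backslash W(\Sigma)/W(\Sigma_0)$ onto the set of $W(\Sigma_0)$-orbits in $\mathcal{O}$ and sends $w_jW(\Sigma_0)$ to $w_j\cdot X_0$. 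It therefore suffices to show that $X_0=w_0X_0,\,w_1X_0,\dots,w_rX_0$ is a complete and irredundant system of $W(\Sigma_0)$-orbit representatives in $\mathcal{O}$.

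Second, I would compute these points and dispatch irredundancy. Strong orthogonality of the $\alpha_i$ makes the reflections $s_{\alpha_i}$ commute, and makes $s_{\alpha_k}$ fix $H_i=[E_i,F_i]$ whenever $i\ne k$ (because $\alpha_k\perp\alpha_i$); feeding this into the identity $s_{\alpha_i}(X_0)=X_0-\langle\alpha_i,\alpha_i\rangle^{-1}H_i$ from the text, a one-step induction yields
\[
w_jX_0=X_0-\sum_{i=1}^{j}\frac{H_i}{\langle\alpha_i,\alpha_i\rangle}\qquad(j=0,\dots,r),
\]
with no cross terms. Now the map $\xi\mapsto\langle\xi,X_0\rangle$ is constant on each $W(\Sigma_0)$-orbit in $\mathcal{O}$ (as $W(\Sigma_0)$ fixes $X_0$ and acts orthogonally), and on $w_jX_0$ it equals $\langle X_0,X_0\rangle-cj$ for a fixed constant $c>0$ — here one uses that $\langle H_i,X_0\rangle>0$ depends only on $\alpha_i(X_0)=1$ and on the squared length of $\alpha_i$, which is the same for all $i$ since the $\alpha_i$ are all long. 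As $j\mapsto\langle w_jX_0,X_0\rangle$ is strictly decreasing, the double cosets $W(\Sigma_0)w_jW(\Sigma_0)$ are pairwise distinct.

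The substantive part, and the step I expect to be the main obstacle, is \emph{exhaustiveness}: every $\xi\in\mathcal{O}$ is $W(\Sigma_0)$-conjugate to some $w_jX_0$. This is the reflection-group shadow of a spectral (``polar'') decomposition — every element of the $W(\Sigma)$-orbit of the grading element splits along the maximal strongly orthogonal family $\alpha_1,\dots,\alpha_r$ — which is in turn the Weyl-group face of the Jordan-theoretic fact that every tripotent in the Jordan algebra attached to the $3$-grading is a sum of at most $r$ pairwise orthogonal primitive tripotents, classified up to automorphism by their rank $j\in\{0,\dots,r\}$. I would attempt a uniform proof by induction on $r$: bring $\xi$ by $W(\Sigma_0)$ into the closed Weyl chamber of $\Sigma_0$; if this already makes $\xi$ dominant for all of $\Sigma^+$ then $\xi=X_0$ (the unique dominant point of $\mathcal{O}$); otherwise one peels off one of the $\alpha_i$ and passes to the sub-root system $\Sigma\cap\alpha_i^{\perp}$, which is again $3$-graded and whose positive-degree part has $\{\alpha_1,\dots,\alpha_r\}\setminus\{\alpha_i\}$ as a maximal strongly orthogonal set by maximality of the original family, applying the inductive hypothesis there and reassembling. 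The delicate point is keeping the grading element under control through the reduction; if a clean uniform argument proves elusive, exhaustiveness can instead be verified case by case from the classification of simple $3$-graded Lie algebras recorded in Tables~\ref{tab:ClassificationCplx}--\ref{tab:ClassificationReal}, where $r$ equals the rank $\rank(H/L)$ listed there. Combining the three steps yields $W(\Sigma)=\dot\bigcup_{j=0}^{r}W(\Sigma_0)w_jW(\Sigma_0)$ with the $w_j$ pairwise inequivalent, which is the assertion.
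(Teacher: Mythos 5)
Your first two steps are sound and complete: Chevalley's lemma does give $\Stab_{W(\Sigma)}(X_0)=W(\Sigma_0)$, so double cosets correspond to $W(\Sigma_0)$-orbits in $W(\Sigma)\cdot X_0$; strong orthogonality gives $w_jX_0=X_0-\sum_{i\leq j}\langle\alpha_i,\alpha_i\rangle^{-1}H_i$; and pairing with $X_0$, which is a $W(\Sigma_0)$-invariant functional on the orbit, separates the points $w_0X_0,\dots,w_rX_0$ (you do not even need equal root lengths here, since each term $\langle H_i,X_0\rangle/\langle\alpha_i,\alpha_i\rangle=2/\langle\alpha_i,\alpha_i\rangle^2$ is positive, so $j\mapsto\langle w_jX_0,X_0\rangle$ is strictly decreasing in any case). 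Note that the paper offers no argument for this lemma at all; it simply cites Takeuchi \cite{T79}, Lemma 5.4(1), and what that citation supplies is precisely the one step you have not proved.

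That step is exhaustiveness, and as written it is a genuine gap rather than a routine verification. Your induction is a sketch, not an argument: after moving $\xi\in W(\Sigma)\cdot X_0$ into the closed $\Sigma_0$-chamber (where, if $\xi$ is $\Sigma^+$-dominant, $\xi=X_0$ indeed follows), you must say which $\alpha_i$ is to be ``peeled off'' and why, show that $\Sigma\cap\alpha_i^\perp$ together with an appropriate grading element is again a configuration of the same type to which the inductive hypothesis applies, and show that $\{\alpha_1,\dots,\alpha_r\}\setminus\{\alpha_i\}$ is still a \emph{maximal} strongly orthogonal family there. None of this is established, and the second point is exactly where the difficulty lies: $X_0$ does not lie in $\fa_+$ in the non-tube cases (e.g.\ for $\fg=\sl(p+q,\R)$ with $p<q$ one computes $X_0=X_++X_-$ with $X_-\neq 0$), so ``keeping the grading element under control through the reduction,'' which you yourself flag as delicate, is the entire content of the step, not a technicality. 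The Jordan-theoretic spectral decomposition you invoke (every tripotent is a sum of at most $r$ orthogonal primitive tripotents, classified up to automorphism by rank) would settle it, but it is a theorem of comparable depth that you would have to cite or prove; likewise the fallback of checking Tables~\ref{tab:ClassificationCplx} and \ref{tab:ClassificationReal} case by case is proposed but not carried out. Either complete one of these routes (Moore's description of the restrictions to $\fa_+$, which you quote later, is the natural input for a uniform argument in the rank-$r$ system of type $C_r$ or $BC_r$), or do what the paper does and simply cite \cite{T79}.
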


\begin{Lemma}[{see \cite[Lem. 4.3]{O91}}]\label{lem:MaxAbelianInPcapH}
Let $\fa_{\fh}=\bigoplus_{i=1}^{r} \R (E_i+F_i)$, then $\fa_\fh$ is maximal abelian in $\fh\cap \fp$. In particular, $r=\rank(H/L)=\rank(K/L)$.
\end{Lemma}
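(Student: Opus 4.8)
\emph{Plan.} This is the content of \cite[Lem.~4.3]{O91}, a standard piece of structure theory for non-compactly causal symmetric spaces, and one may simply cite it; I outline a self-contained argument. The idea is to realise $\fa_\fh$ as a maximal polydisc inside the Cartan complement $\fp^c$ of the $c$-dual, deduce maximality in the smaller space $\fp\cap\fh$, and obtain the rank equality from the isomorphism $\psi$. First I would check that $\fa_\fh\subseteq\fp\cap\fh$ is abelian of dimension $r$: in the $\sl(2,\R)$-triple $\R F_j\oplus\R H_j\oplus\R E_j$ the element $E_j+F_j$ corresponds to $\pmat{0 & 1 \\ 1 & 0}$, which is fixed by conjugation by $E_{12}+E_{21}$ and negated by $X\mapsto -X^t$, so by the stated compatibility of these involutions with $\tau$ and $\theta$ we get $E_j+F_j\in\fp\cap\fh$. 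Strong orthogonality of the $\alpha_j$ makes all cross brackets between the triples vanish, hence $[E_i+F_i,E_j+F_j]=0$; and the $E_i+F_i$ are linearly independent since they lie in the distinct spaces $\fg_{\alpha_i}\oplus\fg_{-\alpha_i}$. Thus $\dim\fa_\fh=r$.

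Next I would show $\fa_\fh$ is maximal abelian in $\fp\cap\fh$. Recalling from Section~\ref{sec:BoundedDomain} that $\fp\cap\fh\subseteq\fp^c=(\fp\cap\fh)\oplus i(\fk\cap\fq)$, it suffices to prove $\fa_\fh$ is maximal abelian in $\fp^c$. Here one uses $\fp^c_+=\fg_1$ together with the fact that, by construction, $\alpha_1,\dots,\alpha_r$ is a maximal set of long strongly orthogonal roots among those whose root space lies in $\fg_1$; writing $E_j+F_j=E_j-\theta E_j$ with $E_j\in\fg_{\alpha_j}\subseteq\fp^c_+$ exhibits the $E_j+F_j$ as the associated polydisc generators. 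The polydisc theorem for the Hermitian symmetric pair $(\fg^c,\fk^c)$ (see \cite{HO97}) then gives that $\fa_\fh$ is maximal abelian in $\fp^c$ and that $r=\mathrm{rank}_\R(G^c/K^c)$. Since $\fp\cap\fh\subseteq\fp^c$, maximality persists in $\fp\cap\fh$, and as $\fh=\fl\oplus(\fp\cap\fh)$ with $L$ maximal compact in $H$ we conclude $\rank(H/L)=\dim\fa_\fh=r$.

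For the remaining equality $\rank(K/L)=\rank(H/L)$, I would use that $\psi=\Ad\bigl(\exp\tfrac{i\pi}{2}X_0\bigr)$ is a Lie algebra automorphism of $\fg_\C$ with $\psi|_{\fg_0}=\id$ and $\psi|_{\fg_{\pm1}}=\pm i\,\id$; since $\fk\cap\fq=\{X+\theta X\mid X\in\fg_1\}$ and $\fp\cap\fh=\{X-\theta X\mid X\in\fg_1\}$, it follows that $\psi(\fk\cap\fq)=i(\fp\cap\fh)$. Hence $\psi$ carries a maximal abelian subspace of $\fk\cap\fq$ onto a maximal abelian subspace of $i(\fp\cap\fh)$, which has the same dimension as a maximal abelian subspace of $\fp\cap\fh$; therefore $\rank(K/L)=\rank(H/L)$. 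The technical heart is the identification of $\{E_j+F_j\}$ with a maximal polydisc in $\fp^c$ and the equality $r=\mathrm{rank}_\R(G^c/K^c)$ — equivalently, ruling out nonzero directions of $\fp\cap\fh$ commuting with $\fa_\fh$, which comes down to the root-theoretic analysis of $\Sigma_+$; this is precisely what \cite[Lem.~4.3]{O91} and the structure theory in \cite{HO97} supply. Everything else is bookkeeping with the commuting $\sl(2,\R)$-triples; as a by-product, conjugating $\amin$ by $\prod_j\exp\bigl(\tfrac{\pi}{4}(F_j-E_j)\bigr)\in K$ yields a $\tau$-stable maximal abelian subspace of $\fp$ meeting $\fh$ in $\fa_\fh$, giving an alternative route that avoids $c$-duality.
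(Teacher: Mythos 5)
Your preliminary steps are fine: $E_j+F_j\in\fp\cap\fh$ follows exactly as you say from $\tau(E_j)=F_j$, $\theta(E_j)=-F_j$; commutativity and linear independence follow from strong orthogonality; and the equality $\rank(K/L)=\rank(H/L)$ via the automorphism $\psi$ (which maps $\fk\cap\fq$ onto $i(\fp\cap\fh)$ and preserves brackets) is correct. Note also that the paper itself gives no proof of this lemma but only the citation of \cite[Lem.~4.3]{O91}, so your fallback of simply citing is consistent with what the paper does. However, the self-contained argument you outline for the crucial maximality step is wrong. You reduce maximality of $\fa_\fh$ in $\fp\cap\fh$ to maximality in the larger space $\fp^c=(\fp\cap\fh)\oplus i(\fk\cap\fq)$ and invoke the polydisc theorem to claim that $\fa_\fh$ is maximal abelian in $\fp^c$ and that $r=\rank(G^c/K^c)$. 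This is false in general: every maximal abelian subspace of $\fp^c$ has dimension equal to $\rank(G^c/K^c)$, and this rank strictly exceeds $r$ in many cases of Tables~\ref{tab:ClassificationCplx} and \ref{tab:ClassificationReal}. For instance, in case BD~Ic, $\fg=\so(n+1,1)$ has real rank one, so necessarily $r=1$, while $\fg^c=\so(n,2)$ yields a rank-two bounded domain; in every group case, e.g.\ $\fg=\sl(p+q,\C)$, one has $r=\min(p,q)$ but $\fg^c=\su(p,q)\oplus\su(p,q)$ has rank $2\min(p,q)$; the same failure occurs for A~II, C~II, D~III and E~IV. In all these cases the $r$-dimensional space $\fa_\fh$ cannot be maximal abelian in $\fp^c$, so your intermediate claim is unprovable and the polydisc route collapses.

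The source of the error is that the $\alpha_j$ are restricted roots of $(\fg,\amin)$: a maximal strongly orthogonal family in $\Sigma_+$ need not produce a maximal family of strongly orthogonal noncompact roots of $\fg^c$ relative to a Cartan subalgebra of $\fk^c$, because the restricted root spaces $\fg_{\alpha_j}$ can have multiplicity greater than one and hide further strongly orthogonal directions inside $\fp^c_+$; thus the polydisc attached to $\alpha_1,\dots,\alpha_r$ need not be maximal in $G^c/K^c$. Consequently the ``equivalently'' in your final paragraph is not an equivalence: ruling out elements of $\fp\cap\fh$ commuting with $\fa_\fh$ is strictly weaker than ruling them out in all of $\fp^c$, and only the weaker statement is true and needed. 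A correct self-contained argument has to work directly inside $\fh\cap\fp$, for example by writing a commuting element as $X-\theta X$ with $X\in\fg_1$, decomposing along the restricted root spaces in $\Sigma_+$, and using the maximality of $\{\alpha_1,\dots,\alpha_r\}$ together with the Moore-type structure of $\Sigma$ (as in \cite{O91} and \cite{HO97}); your concluding Cayley-transform remark points in that direction but is asserted, not proved.
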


\begin{Thm}\label{th:double}
The open $H$-orbits in $G/\Pmax$ are $\cO_j=H\cdot\tilde{w}_j\Pmax$, $j=0,\ldots,r$. In particular, the number of open $H$-orbits is $r+1$.
\end{Thm}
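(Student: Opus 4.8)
The plan is to reduce the description of the open $H$-orbits in $\cB = G/\Pmax$ to the Bruhat-type decomposition of $G$ relative to $\Pmax$ together with the $H$-action on the big cell. First I would recall that $\cB$ decomposes under the left action of $\Pmax$ (equivalently, since $\cB = K/L$, under $L$) into finitely many orbits indexed by $W(\Sigma_0)\backslash W(\Sigma)/W(\Sigma_0)$: for each double coset representative $w$, the orbit $\Pmax \cdot \widetilde{w}\,\bP/\bP$ (or rather the corresponding cell in $G/\Pmax$) has dimension governed by the length-type function on the double coset, and exactly one such orbit is open and dense, namely the one through $\overline N \cdot b_0$. Dually, $\cB = \tau(\Pmax)\backslash G$-type considerations give the $H$-orbit picture once we intersect with $H$. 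The key point is that $\overline N \Pmax$ is open dense in $G$ and $\tau(\overline N) = N$, $\tau(\Pmax) = G_0\exp(\fg_{-1})$ by Lemma~\ref{lem:InvolutionsOnPmax}, so the $H$-orbit structure on $\cB$ is controlled by how $H$ meets the Bruhat cells.

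The main steps are as follows. \emph{Step 1:} Using Lemma~\ref{lem:InvolutionsOnPmax} and the fact that $\Pmax \cap H = L$, observe that the $H$-orbit through $\widetilde{w}_j\Pmax$ is open if and only if the corresponding $L$-orbit in $K/L$ through $\widetilde{w}_j L$ is open, which (since $\dim \cO_j = \dim H - \dim(H \cap \widetilde{w}_j\Pmax\widetilde{w}_j^{-1})$) happens precisely when $\fh + \Ad(\widetilde{w}_j)\pmax = \fg$. \emph{Step 2:} Translate this transversality condition into root-theoretic language: since $\Ad(\widetilde{w}_j) = w_j$ on $\amin$, the subspace $\Ad(\widetilde{w}_j)\pmax = \Ad(\widetilde{w}_j)(\fg_0 \oplus \fn)$ is $\fg_0 \oplus \bigoplus_{\alpha \in \Sigma_0^+ \cup w_j\Sigma_+} \fg_\alpha$ (roughly speaking; one must track the $w_j$-images of $\Sigma_+$), and the complementary directions inside $\fq$ are spanned by root spaces $\fg_\alpha$ with $\alpha \in -w_j\Sigma_+$. \emph{Step 3:} Use the strong orthogonality of $\alpha_1, \dots, \alpha_r$ and the $\sl(2,\R)$-triples $(E_j, H_j, F_j)$: since $w_j = s_{\alpha_1}\cdots s_{\alpha_j}$ flips exactly the roots $\alpha_1, \dots, \alpha_j$ (and permutes the rest of $\Sigma_+$ within itself, by $\WH(\Sigma_+) = \Sigma_+$ and the commuting reflections), the "missing" negative directions are precisely spanned by the $F_i - \theta F_i = F_i - E_i \in \fh \cap \fp$ for $i \le j$ together with a symmetric contribution — and crucially, because $\alpha_1, \dots, \alpha_r$ is a \emph{maximal} such set (Lemma~\ref{lem:MaxAbelianInPcapH}, giving $r = \rank(H/L)$), there are no further independent directions, so $\fh + \Ad(\widetilde{w}_j)\pmax = \fg$ holds for every $j = 0, \dots, r$ and for no other double coset. \emph{Step 4:} Conclude that the open $H$-orbits are exactly $\cO_j = H \cdot \widetilde{w}_j\Pmax$ for $j = 0, \dots, r$, and that these are pairwise distinct because the $\widetilde{w}_j$ lie in distinct $W(\Sigma_0)$-double cosets (hence in distinct $L$-double cosets in $K$, using $N_K(\amin)/Z_K(\amin) = \W$ and $Z_K(\amin) = Z_L(\amin) \subseteq L$).

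\textbf{Main obstacle.} The delicate point is Step 3: showing that the maximal strongly orthogonal set $\{\alpha_1, \dots, \alpha_r\}$ in $\Sigma_+$ really does account for \emph{all} the open double cosets and that no double coset $W(\Sigma_0) w W(\Sigma_0)$ outside $\{w_0, \dots, w_r\}$ yields an open orbit. This requires a careful count: one must verify that for the representatives $w_0, \dots, w_r$ the transversality $\fh + w_j\pmax = \fg$ is satisfied — which amounts to checking that the roots in $w_j\Sigma_+ \cap (-\Sigma_+)$ are exactly $\{-\alpha_1, \dots, -\alpha_j\}$ and that their $\fh$-components span the required complement, using the explicit $\sl(2,\R)$-triples — and simultaneously that \emph{other} double cosets fail transversality, equivalently that the generic $H$-orbit has the expected dimension only along these $r+1$ cells. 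Here I expect to lean on the cited structure theory (\cite{T79,O91,HO97}) for the combinatorics of $W(\Sigma_0)\backslash W(\Sigma)/W(\Sigma_0)$ and the fact that the open $H\cap K = L$-orbits in the compact symmetric space $K/L$ are classified by this double coset space with the open ones corresponding precisely to the "Cayley-type" representatives $w_j$; the real work is assembling these inputs into the clean statement that the count is exactly $r+1$.
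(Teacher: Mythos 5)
Your framework starts from a correct openness criterion (the $H$-orbit through $\tilde{w}_j\Pmax$ is open iff $\fh+\Ad(\tilde{w}_j)\pmax=\fg$), but the proposal has a genuine gap exactly at the point you yourself flag as the main obstacle, and the two assertions you would use to close it are false. First, the Step~1 equivalence between openness of the $H$-orbit in $G/\Pmax$ and openness of the $L$-orbit through $\tilde{w}_jL$ in $K/L$ cannot hold: since $K=L\exp(\fb)L$ with $\fb\subseteq\fk\cap\fq$ maximal abelian of dimension $r\geq 1$, the $L$-orbits in $K/L$ are parametrized by a positive-dimensional set (equivalently, any $K$-invariant distance to $b_0$ is a non-constant $L$-invariant function), so \emph{no} $L$-orbit in $K/L$ is open; in particular the ``fact'' you invoke at the end, that the open $L$-orbits in $K/L$ are classified by the double cosets with the $w_j$ as open representatives, is not available. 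Second, the combinatorial claim that $w_j\Sigma_+\cap(-\Sigma^+)=\{-\alpha_1,\dots,-\alpha_j\}$ fails already for $j=2$: by Moore's theorem on restricted roots (quoted later in the paper), $\Sigma_+$ contains roots restricting to $\tfrac12(\alpha_i+\alpha_k)$ on $\fa_+$, and for $i<k\leq j$ these are sent by $w_j=s_{\alpha_1}\cdots s_{\alpha_j}$ to roots restricting to $-\tfrac12(\alpha_i+\alpha_k)$, hence into $\Sigma_-$, while roots restricting to $\tfrac12(\alpha_i+\alpha_k)$ with $i\leq j<k$ land in $\Sigma_0$. So the actual verification of transversality for each $w_j$, and above all the proof that no double coset outside $\{w_0,\dots,w_r\}$ yields an open orbit, is never carried out; you defer it to ``the cited structure theory,'' which is precisely the nontrivial content of the theorem.

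For comparison, the paper does not argue via transversality at all: it checks the two hypotheses $W_{\Pmax}=\WH$ and that $H\Pmax=H\Pmin$ is open (using $G_0=L(\Pmin\cap G_0)$), and then applies Rossmann's classification of open $H$-orbits \cite[Cor.~16]{R79} (see also \cite{M82} and \cite[Lem.~5.4.15]{HO97}); combined with Takeuchi's lemma that $\{w_0,\dots,w_r\}$ represents $W(\Sigma_0)\backslash W(\Sigma)/W(\Sigma_0)$ \cite{T79}, this gives both the list of open orbits and the count $r+1$, including their distinctness. Note that your Step~4 also has a gap of the same kind: knowing that $\tilde{w}_i$ and $\tilde{w}_j$ lie in different $L$-double cosets of $K$ does not by itself exclude $H\tilde{w}_i\Pmax=H\tilde{w}_j\Pmax$, since $H$ and $\Pmax$ are both strictly larger than $L$; it is again the cited orbit parametrization that delivers this. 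A self-contained proof along your lines would require the full root-space bookkeeping of $\fh+\Ad(\tilde{w})\pmax$ for an arbitrary double coset representative $w$, i.e.\ essentially reproving the Rossmann--Matsuki results in this special case.
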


\begin{proof}
Let $W_{\Pmax} =N_{\Pmax} (\amin )/Z_{\Pmax} (\amin )$. Then $W_{\Pmax}=W_{H\cap K}$. As $G_0=L(\Pmin\cap G_0)$ it follows that $H\Pmax=H \Pmin$ is open. The claim now follows from \cite[Cor. 16]{R79}. See also the remark on  p. 317 in \cite{M82} and \cite[Lem. 5.4.15]{HO97}.
\end{proof}

\subsection{The stabilizer subgroups}

For each $j=0,\ldots,r$ denote by
$$ H_j = H\cap\tilde{w}_j\Pmax\tilde{w}_j^{-1} $$
the stabilizer of $\tilde{w}_jb_0$ in $H$. Then the open $H$-orbit $H\cdot\tilde{w}_jb_0$ can be identified with the homogeneous space $H/H_j$. We show that $H_j$ is a symmetric subgroup of $H$.

Let $j=0,\ldots,r$ and define an automorphism $\sigma_j$ of $G_\C$ by
$$ \sigma_j(g) := \tilde{w}_j^2\tilde{\tau}(g)\tilde{w}_j^{-2}, \qquad g\in G_\C. $$
Note that $\sigma_0=\tilde{\tau}$.

\begin{Lemma}
$\tilde{w}_j^4=1$ and hence $\sigma_j$ is an involution. Moreover, $\sigma_j$ leaves $H$ invariant.
\end{Lemma}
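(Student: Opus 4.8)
The plan is to establish the two assertions separately: first that $\tilde w_j^4 = 1$ (whence $\sigma_j^2 = \id$), and then that $\sigma_j(H) = H$. For the first, recall from the discussion preceding the statement that each $s'_{\alpha_i}$ lies in the copy of $\SL(2,\mathbb R)$ attached to the strongly orthogonal root $\alpha_i$, and that $(s'_{\alpha_i})^2 \in L$; moreover the $s'_{\alpha_i}$ commute pairwise because the root vectors $E_i, F_i$ ($i = 1,\dots,r$) bracket to zero. Hence $\tilde w_j^2 = \big(\prod_{i\le j} s'_{\alpha_i}\big)^2 = \prod_{i\le j} (s'_{\alpha_i})^2$ lies in $L$. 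To get $\tilde w_j^4 = 1$ I would compute $(s'_{\alpha_i})^2$ explicitly inside the $\SL(2,\mathbb R)$-triple: under the isomorphism $\mathbb R F_i \oplus \mathbb R H_i \oplus \mathbb R E_i \cong \sl(2,\mathbb R)$, the element $s'_{\alpha_i} = \exp\big(\tfrac\pi2(E_i - F_i)\big)$ corresponds to $\exp\!\begin{pmatrix} 0 & \pi/2 \\ -\pi/2 & 0\end{pmatrix} = \begin{pmatrix} 0 & 1 \\ -1 & 0\end{pmatrix}$, so its square is $-I$, the nontrivial central element of $\SL(2,\mathbb R)$; since the $\sl(2)$-triples commute, $\tilde w_j^2$ is the product of these central involutions and therefore $\tilde w_j^4 = \prod_{i\le j}(s'_{\alpha_i})^4 = 1$.

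For $\sigma_j^2 = \id$: since $\tilde\tau$ is an involution of $G_\mathbb C$, $\sigma_j^2(g) = \tilde w_j^2\,\tilde\tau\big(\tilde w_j^2\,\tilde\tau(g)\,\tilde w_j^{-2}\big)\,\tilde w_j^{-2} = \tilde w_j^2\,\tilde\tau(\tilde w_j^2)\,g\,\tilde\tau(\tilde w_j^{-2})\,\tilde w_j^{-2}$. Now $\tilde w_j^2 \in L \subseteq K^{\tilde\tau}$ (indeed $L \subseteq G^{\tilde\tau}$ was established in Section~\ref{sec:NCCSymmetricSpaces}, using $\psi^2 = \tilde\tau$ and $L \subseteq G^{\tilde\tau}$), so $\tilde\tau(\tilde w_j^2) = \tilde w_j^2$, and the conjugating factors collapse to $\tilde w_j^4 = 1$; hence $\sigma_j^2 = \id$.

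For the invariance $\sigma_j(H) = H$: write $\sigma_j = \Ad_{\tilde w_j^2} \circ \tilde\tau$ on $G_\mathbb C$. Since $G/H$ with $H = L(G^\tau)_0$ was shown to be a non-compactly causal symmetric space and $\tilde\tau = \tau\theta$, while $\tau(H) = H$ (as $H \subseteq G^\tau$) and $\theta(H) = H$ (as $L \subseteq K = G^\theta$ and $(G^\tau)_0$ is $\theta$-stable, $\theta$ commuting with $\tau$), we get $\tilde\tau(H) = H$. It remains to see that conjugation by $\tilde w_j^2 \in L$ preserves $H$: but $L$ normalizes both $L$ and $(G^\tau)_0 = H_0$ — the latter because $L$ normalizes $\fh$, as was noted in Section~\ref{sec:NCCSymmetricSpaces} (``$L$ normalizes $(G^\tau)_0=H_0$'') — so $L$ normalizes $H = L(G^\tau)_0$, and in particular $\Ad_{\tilde w_j^2}(H) = H$. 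Combining, $\sigma_j(H) = \Ad_{\tilde w_j^2}(\tilde\tau(H)) = \Ad_{\tilde w_j^2}(H) = H$.

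The main obstacle is the bookkeeping in the first step: pinning down $(s'_{\alpha_i})^2$ precisely enough to conclude $\tilde w_j^4 = 1$ rather than merely $\tilde w_j^2 \in L$. This requires being careful that the $\sl(2,\mathbb R)$-isomorphism is a genuine group-level isomorphism onto the relevant subgroup (so that $\exp$ of the $\sl(2)$-element really is $-I$ there, not some element of a larger cover), and that the pairwise commutativity at the Lie-algebra level lifts to commuting one-parameter subgroups so the product of squares is the square of the product. Everything else is a formal manipulation with the already-established facts $L \subseteq G^{\tilde\tau}$, $\tau(H) = H$, $\theta(H) = H$, and ``$L$ normalizes $H_0$''.
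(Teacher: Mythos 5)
Your argument follows essentially the same route as the paper: reduce $\tilde w_j^4=1$ to the commuting $\sl(2)$-triples attached to the strongly orthogonal roots, and obtain invariance of $H$ from $\wtt(H)=H$ together with $\tilde w_j^2\in L\subseteq H$ (the paper instead re-derives $\Ad(\tilde w_j^2)X_0=X_0$ by an explicit computation and concludes $\tilde w_j^2\in Z_K(X_0)\subseteq H$, but this is the same point). Your treatment of $\sigma_j^2=\id$ via $\wtt(\tilde w_j^2)=\tilde w_j^2$ and of $\wtt(H)=H$ via $\tau(H)=H$, $\theta(H)=H$ is fine and in fact spells out a step the paper only asserts.

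The one place where your proposal stops short is exactly the point you yourself flag as ``the main obstacle'': you never actually justify that the computation $s_{\alpha_i}'{}^2=-I$, $s_{\alpha_i}'{}^4=I$ performed in $\SL(2,\R)$ is valid at the group level in $G$, i.e.\ that $\exp\big(\tfrac{\pi}{2}(E_i-F_i)\big)$ computed in $G$ has order dividing $4$ rather than being an element of larger (possibly infinite) order in a covering of $\SL(2,\R)$ or $\PSL(2,\R)$. The paper closes this gap in one line: the triple $(E_j,F_j,H_j)$ lies in $\fg_\C$, and since $\SL(2,\C)$ is simply connected the Lie algebra map integrates to a homomorphism $\varphi_j:\SL(2,\C)\to G_\C$ (here the standing assumption $G\subseteq G_\C$ is used), so $s_{\alpha_j}'=\varphi_j\!\begin{pmatrix}0&1\\-1&0\end{pmatrix}$ and $s_{\alpha_j}'{}^4=\varphi_j(I)=1$. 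Note also that what is needed is weaker than what you ask for: one does not need the $\sl(2,\R)$-map to be an isomorphism onto the relevant subgroup (it may well have kernel $\{\pm I\}$); a group homomorphism out of $\SL(2,\R)$, or $\SL(2,\C)$, suffices, since passing to a quotient can only decrease the order of the image element, and the conclusion $\tilde w_j^4=1$ survives. With this one-line integration argument inserted, your proof is complete and coincides with the paper's.
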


\begin{proof}
Consider the $\sl_2$-triple $(E_j,F_j,H_j)$ in $\fg_\C$. Since $\SL(2,\C)$ is simply connected there is a unique group homomorphism $\varphi_j:\SL(2,\C)\to G_\C$ such that $d\varphi_j$ maps the standard $\sl_2$-triple $(E,F,H)$ in $\sl(2,\C)$ to $(E_j,F_j,H_j)$. Hence
$$ \tilde{w}_j = \varphi_j\left(\begin{array}{cc}0&1\\-1&0\end{array}\right). $$
This implies that $\tilde{w}_j^4=1$ which proves the first statement. For the second statement note that $\tilde{\tau}$ leaves $H$ and $G^c$ invariant. Further, a short computation shows that
$$ \Ad(\exp(t(E_j-F_j)))X_0 = (X_0-\tfrac{1}{2}H_j) + \tfrac{1}{2}\cos(2t)H_j - \tfrac{1}{2}\sin(2t)(E_j+F_j). $$
In particular, $\Ad(\tilde{w}_j^2)X_0=\Ad(\exp(\pi(E_j-F_j)))X_0=X_0$ so that $\tilde{w}_j^2\in Z_K(X_0)\subseteq H$. Hence $\sigma_j$ leaves $H$ invariant and the proof is complete.
\end{proof}

\begin{Prop}\label{pro:5.5}
For every $j=0,\ldots,r$ we have $H_j=H^{\sigma_j}$, in particular the open $H$-orbits in $\cB$ are symmetric spaces.
\end{Prop}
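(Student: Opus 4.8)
The strategy is to translate both groups into conditions on the single element $\tilde{w}_j^{-1}h\tilde{w}_j$ and then compare. By definition $h\in H_j$ if and only if $\tilde{w}_j^{-1}h\tilde{w}_j\in\Pmax$. For the other side I would first record how the involutions act on $\tilde{w}_j$: since $E_i\in\fg_1$, $F_i\in\fg_{-1}$ and $\wtt$ is $-\id$ on $\fg_{\pm1}$, while $\theta(E_i)=-F_i$, one gets $\theta(s'_{\alpha_i})=s'_{\alpha_i}$ and $\wtt(s'_{\alpha_i})=(s'_{\alpha_i})^{-1}$; as the $s'_{\alpha_i}$ commute this yields $\theta(\tilde{w}_j)=\tilde{w}_j$ and $\wtt(\tilde{w}_j)=\tau(\tilde{w}_j)=\tilde{w}_j^{-1}$. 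Consequently $\sigma_j=\Ad(\tilde{w}_j)\circ\wtt\circ\Ad(\tilde{w}_j)^{-1}$, so that $G^{\sigma_j}=\tilde{w}_jG^{\wtt}\tilde{w}_j^{-1}$ and, for $h\in H$, $\sigma_j(h)=h$ if and only if $\tilde{w}_j^{-1}h\tilde{w}_j\in G^{\wtt}$. Thus the Proposition is equivalent to the equality $\tilde{w}_j^{-1}H\tilde{w}_j\cap\Pmax=\tilde{w}_j^{-1}H\tilde{w}_j\cap G^{\wtt}$.

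For the inclusion $H_j\subseteq H^{\sigma_j}$, take $h\in H\subseteq G^\tau$ and put $g=\tilde{w}_j^{-1}h\tilde{w}_j$. Using $\wtt|_H=\theta|_H$ (valid since $\tau$ is trivial on $H$ and $\wtt=\theta\tau$) together with $\theta(\tilde{w}_j)=\tilde{w}_j$, a short computation gives $\theta(g)=\tilde{w}_j^{-2}\,\wtt(g)\,\tilde{w}_j^{2}$. Now assume $g\in\Pmax$. Then $\wtt(g)\in\wtt(\Pmax)=\Pmax$ by Lemma~\ref{lem:InvolutionsOnPmax}, and since $\tilde{w}_j^{2}\in L\subseteq\Pmax$ normalises $\Pmax$ we obtain $\theta(g)\in\Pmax$, i.e.\ $g\in\Pmax\cap\theta(\Pmax)=MA=G_0$. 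As $\fg_0=\fg^{\wtt}$ and $G_0=L\exp(\fp\cap\fq)$ with $\fp\cap\fq\subseteq\fg_0$, we have $G_0\subseteq G^{\wtt}$, so $g\in G^{\wtt}$ and hence $\sigma_j(h)=h$. This proves $H_j\subseteq H^{\sigma_j}$.

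The reverse inclusion is the main point. On Lie algebras, $\text{Lie}(H_j)=\fh\cap\Ad(\tilde{w}_j)\pmax=\Ad(\tilde{w}_j)\big(\pmax\cap\Ad(\tilde{w}_j^{-1})\fh\big)$. If $X=X^{(0)}+X^{(1)}\in\pmax$ with $X^{(0)}\in\fg_0$, $X^{(1)}\in\fg_1$ and $\Ad(\tilde{w}_j)X\in\fh$, then $\tau$-invariance of $\Ad(\tilde{w}_j)X$ rewrites, via $\tau(\tilde{w}_j)=\tilde{w}_j^{-1}$, as $\tau(X)=\Ad(\tilde{w}_j^{2})X$; since $\tilde{w}_j^{2}\in Z_K(X_0)$ commutes with $\ad X_0$ and hence preserves the $3$-grading, while $\tau$ maps $\fg_1$ into $\fg_{-1}$, comparison of the $\fg_{-1}$-components forces $X^{(1)}=0$. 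Therefore $\pmax\cap\Ad(\tilde{w}_j^{-1})\fh=\fg_0\cap\Ad(\tilde{w}_j^{-1})\fh$, and because $\fg_0=\fg^{\wtt}$ and $\Ad(\tilde{w}_j)\fg^{\wtt}=\fg^{\sigma_j}$ this equals $\Ad(\tilde{w}_j^{-1})\fh^{\sigma_j}$; so $\text{Lie}(H_j)=\fh^{\sigma_j}=\text{Lie}(H^{\sigma_j})$. Combined with the inclusion of the previous paragraph this already gives $(H^{\sigma_j})_0\subseteq H_j\subseteq H^{\sigma_j}$, which is exactly what is needed to present $\cO_j=H/H_j$ as a symmetric space. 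The remaining task — matching the finitely many connected components so that $H_j=H^{\sigma_j}$ on the nose — is the only delicate step: it fails to be automatic precisely in the self-dual cases where $\Pmax$ and $\bP$ are $G$-conjugate (equivalently $N_K(\fa)$ contains an element acting by $-1$ on $\fa$, whence $G^{\wtt}\not\subseteq\Pmax$), and there I would rule out the offending components of $K^{\wtt}$ — those interchanging $\fg_1$ and $\fg_{-1}$ — using the explicit realisation of $\tilde{w}_j^{2}$ through the homomorphisms $\varphi_i:\SL(2,\C)\to G_\C$ together with the extra constraint $\tau(\tilde{w}_j^{-1}h\tilde{w}_j)=\tilde{w}_j^{2}(\tilde{w}_j^{-1}h\tilde{w}_j)\tilde{w}_j^{-2}$ coming from $h\in G^\tau$.
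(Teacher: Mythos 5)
Your reduction in the first paragraph and the inclusion $H_j\subseteq H^{\sigma_j}$ in the second are correct, and they essentially reproduce the paper's computation: you apply $\theta$ (which equals $\wtt$ on $H$) where the paper applies $\tau$, and in both cases the point is that $\tilde{w}_j^{-1}h\tilde{w}_j$ lands in $\Pmax\cap\bP=G_0$. The genuine gap is the reverse inclusion $H^{\sigma_j}\subseteq H_j$. You establish it only on the level of Lie algebras and then explicitly defer the group-level statement (``matching the finitely many connected components'') to an undeveloped plan involving the components of $K^{\wtt}$ and the homomorphisms $\varphi_i$. But that group-level statement \emph{is} the content of the Proposition: the sandwich $(H^{\sigma_j})_0\subseteq H_j\subseteq H^{\sigma_j}$ you obtain yields only the ``in particular'' clause (that $\cO_j$ is a symmetric space), not the asserted equality $H_j=H^{\sigma_j}$. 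As written, the main claim is unproved.

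The missing step also does not require any case distinction or component analysis. By your own first paragraph, for $h\in H$ the condition $\sigma_j(h)=h$ is equivalent to $g:=\tilde{w}_j^{-1}h\tilde{w}_j\in G^{\wtt}$, and the paper closes the argument by invoking $\Pmax\cap\bP=G_0=G^{\wtt}$ (with $G_0=MA$ as in Lemma~\ref{lem:InvolutionsOnPmax}): then $g\in G_0\subseteq\Pmax$, hence $h\in H_j$, with no reference to tube type or to whether $\Pmax$ and $\bP$ are $G$-conjugate. Equivalently, in the paper's setting $K^{\wtt}=Z_K(X_0)=L$, so there are no ``offending components'' to rule out, and your diagnosis that the equality fails to be automatic precisely in the self-dual cases is not borne out by the paper's proof. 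If you add the identity $G^{\wtt}=G_0$ (or prove $K^{\wtt}=L$ directly), your argument finishes in one line and the Lie-algebra computation in your third paragraph becomes superfluous; without some such input, the proof is incomplete.
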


\begin{proof}
Let $h\in H$. Then
$$ h\in\tilde{w}_j\Pmax\tilde{w}_j^{-1} \quad\Leftrightarrow\quad \tilde{w}_j^{-1}h\tilde{w}_j\in\Pmax. $$
We claim that
$$ \tilde{w}_j^{-1}h\tilde{w}_j\in\Pmax \quad\Leftrightarrow\quad \tilde{w}_j^{-1}h\tilde{w}_j\in G_0. $$
The direction $\Leftarrow$ is clear, so we assume $\tilde{w}_j^{-1}h\tilde{w}_j\in \Pmax$. Applying $\tau$ yields
\begin{equation}
 \tau(\tilde{w}_j^{-1}h\tilde{w}_j) = \tilde{w}_jh\tilde{w}_j^{-1} \in\tau(\Pmax) = \bP.\label{eq:TauOfConjugateH}
\end{equation}
Since $\Ad(\tilde{w}_j^2)X_0=X_0$ we have $\tilde{w}_j^2\bP\tilde{w}_j^{-2}=\bP$. Applying this to \eqref{eq:TauOfConjugateH} yields
$$ \tilde{w}_j^3h\tilde{w}_j^{-3} = \tilde{w}_j^{-1}h\tilde{w}_j \in \bP. $$
This implies that $\tilde{w}_j^{-1}h\tilde{w}_j\in\Pmax\cap\bP=G_0=G^{\tilde{\tau}}$ and we obtain
$$ h\in\tilde{w}_j\Pmax\tilde{w}_j^{-1} \quad\Leftrightarrow\quad \tilde{w}_j^{-1}h\tilde{w}_j\in G_0 \quad\Leftrightarrow\quad \tilde{w}_j^{-1}h\tilde{w}_j=\tilde{\tau}(\tilde{w}_j^{-1}h\tilde{w}_j). $$
But now
$$ \tilde{\tau}(\tilde{w}_j^{-1}h\tilde{w}_j) = \tilde{w}_j\tilde{\tau}(h)\tilde{w}_j^{-1} $$
and therefore
$$ \tilde{w}_j^{-1}h\tilde{w}_j=\tilde{\tau}(\tilde{w}_j^{-1}h\tilde{w}_j) \quad\Leftrightarrow\quad h = \tilde{w}_j^2\tilde{\tau}(h)\tilde{w}_j^{-2}. $$
This shows $H_j=H^{\sigma_j}$.
\end{proof}

\begin{Ex}[The real Grassmannians]
Let $G=\SL(p+q,\R)$ acting on the Grassmannian $\cB=\Gr_p(\R^{p+q})$ of $p$-dimensional subspaces in $\R^{p+q}$. Then
$$ \Pmax = \left\{\left.\begin{pmatrix} A & B \\ 0 & D\end{pmatrix}
\, \right|\, \begin{array}{cc}A\in \GL (p,\R),D\in\GL(q,\R),B\in M_{p\times q}(\R),\\(\det A)(\det D)=1\end{array} \right\} $$
and $H=\SO(p,q)$, the indefinite orthogonal group with respect to the symmetric bilinear form
$$ \omega(x,y) = x_1y_1+\cdots+x_py_p-x_{p+1}y_{p+1}-\cdots-x_{p+q}y_{p+q}. $$
Assume $1\leq p\leq q$, then ${\rm rank}(H/L)=p$ and the $p+1$ open $H$-orbits $\cO_0,\ldots,\cO_p$ in $\cB$ are given by
$$ \cO_j = \{b\in\cB:\omega|_{b\times b}\mbox{ has signature }(p-j,j)\} \qquad (0\leq j\leq p). $$
The elements $b_0,\ldots,b_p\in\cB$ given by
$$ b_j = \R e_1+\cdots+\R e_{p-j}+\R e_{p+1}+\cdots+\R e_{p+j} $$
are representatives of the orbits, i.e. $\cO_j=H\cdot b_j$, $0\leq j\leq p$. The stabilizer of $b_j$ in $H$ is given by $H_j=\upS(\upO(p-j,j)\times\upO(j,q-j))$, so that $\cO_j\simeq H/H_j$. In particular, $H_0=\upS(\upO(p)\times\upO(q))$ is the maximal compact subgroup of $H=\SO(p,q)$ and $\cO_0\simeq H/H_0$ is the corresponding Riemannian symmetric space. Note that for $p=q$ also $H_p$ is the maximal compact subgroup of $H$ and hence both $\cO_0$ and $\cO_p$ are Riemannian symmetric spaces.
\end{Ex}

\subsection{Restricting the Berezin form}

According to \cite[Lemma 1.3]{O87} we have
\begin{equation}
 \int_{\cB}f(b)db =\sum_{j=0}^r \int_{H/H_j}\, f(g\tilde{w}_j \cdot b_0)a(g\tilde{w}_j)^{-2\rho}\, dg\, ,\label{eq:SumOverHOrbits}
\end{equation}
where $dh$ denotes the (suitably normalized) $H$-invariant measure on $H/H_j$. Note that since $H/H_j$ is a symmetric space, invariant measures always exists. This integral formula can be used to rewrite the restriction of the Berezin form to an open $H$-orbit $H\cdot\tilde{w}_jb_0$ in terms of integrals over the symmetric space $H/H_j$. To state the result we define for a function $f$ on $\cB$:
$$ f_{j,\lambda}: H/H_j\to\C, \quad f_{j,\lambda}(g) = a(g\tilde{w}_j)^{-\lambda-\rho}f(g\tilde{w}_j\cdot b_0). $$

\begin{Thm}
Assume that $f,h\in C^\infty(\cB)$ have compact support inside the open $H$-orbit $H\cdot\tilde{w}_jb_0\subseteq\cB$, then
\[\herm{f}{h}_\lambda=\int_{H/H_j}\int_{H/H_j} \alpha (\tilde{w}_jg^{-1}k\tilde{w}_j)^{\lambda-\rho}f_{j,\lambda}(g) \overline{h_{j,\lambda}(k)}
\, dg\, dk\, .\]
\end{Thm}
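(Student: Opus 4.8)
The plan is to combine the formula for the Berezin form in the non-compact picture (Lemma~\ref{le:4.4}) with the integral formula \eqref{eq:SumOverHOrbits} decomposing the integral over $\cB$ into a sum over the open $H$-orbits. However, since $f$ and $h$ are supported in a single open orbit $H\cdot\tilde w_j b_0$, it is cleaner to work directly: start from the defining expression \eqref{eq:canonicalHermitian} of $\herm{f}{h}_\lambda$ as a double integral over $\cB\times\cB$ against the Berezin kernel $\beta_\lambda$, and then substitute \eqref{eq:SumOverHOrbits} in each variable. Because $\supp f, \supp h\subseteq H\cdot\tilde w_j b_0$, only the $j$-th term survives in each sum, so we are left with a double integral over $H/H_j\times H/H_j$.

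First I would write, using \eqref{eq:SumOverHOrbits} twice and the support hypothesis,
\[
\herm{f}{h}_\lambda = \int_{H/H_j}\int_{H/H_j} \beta_\lambda(g\tilde w_j\cdot b_0, k\tilde w_j\cdot b_0)\, f(g\tilde w_j\cdot b_0)\,\overline{h(k\tilde w_j\cdot b_0)}\, a(g\tilde w_j)^{-2\rho} a(k\tilde w_j)^{-2\rho}\, dg\, dk.
\]
Next I would expand the Berezin kernel. By \eqref{eq:BerezinKernel} and the definition \eqref{de:KerAlpha} of $\alpha_\lambda$, we have $\beta_\lambda(x,y)=\alpha_\lambda(\wtt(x),y)$, and for $x=g\tilde w_j\cdot b_0$ (so $x = (g\tilde w_j)\cdot b_0$ with representative $g\tilde w_j\in G$, not necessarily in $K$) I need the version of $\alpha_\lambda$ valid for arbitrary group elements. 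The key identity, which mirrors the computation in the proof of Lemma~\ref{le:4.4}, is that for $g_1,g_2\in G$,
\[
\alpha_\lambda(g_1\cdot b_0, g_2\cdot b_0) = a(g_1)^{-\lambda+\rho} a(g_2)^{-\lambda+\rho}\,\alpha(g_1^{-1}g_2)^{\lambda-\rho},
\]
obtained by writing $g_i = k(g_i)m(g_i)a(g_i)n(g_i)$ and using that $\alpha$ is left $M\bN$-invariant and right $MN$-invariant together with $\alpha(k(g_i))^{\cdots}$ normalization; one must also use $a(\wtt(g))=a(g)$ and $\wtt|_\fa=\id$ (from Lemma~\ref{lem:InvolutionsOnPmax}) and $\tau(N)=\bN$ to handle the $\wtt$/$\tau$ twist, exactly as in Lemma~\ref{lemma:wttau-pil} and Lemma~\ref{le:4.4}. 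Applying this with $g_1 = \wtt(g\tilde w_j)$ and $g_2 = k\tilde w_j$, and using $\wtt(g\tilde w_j) = \tau(g)\,\wtt(\tilde w_j)$ together with $a(\tau(g)\wtt(\tilde w_j)) = a(g\tilde w_j)$, the factors $a(g\tilde w_j)^{-\lambda+\rho}$ and $a(k\tilde w_j)^{-\lambda+\rho}$ come out, and $\alpha(g_1^{-1}g_2)^{\lambda-\rho}$ becomes $\alpha(\wtt(\tilde w_j)^{-1}\tau(g)^{-1}k\tilde w_j)^{\lambda-\rho}$. Since $\tilde w_j\in K$ and $\wtt=\tau\theta$ with $\tau,\theta$ agreeing on $K$, one has $\wtt(\tilde w_j)=\tau(\tilde w_j)=\theta(\tilde w_j)=\tilde w_j^{-t}$-type relation; more precisely I expect $\wtt(\tilde w_j)^{-1}\tau(g)^{-1}$ to simplify, using the $\sl(2,\R)$-structure of the $s'_{\alpha_i}$ and the fact that $\tau$ and $\theta$ act on the relevant triples as in the setup before Theorem~\ref{th:double}, so that up to left $M\bN$- and right $MN$-invariance of $\alpha$ the argument can be rewritten as $\tilde w_j g^{-1} k \tilde w_j$ (this uses $\tau(\tilde w_j) = \tilde w_j^{-1}$ modulo elements absorbed by the invariances of $\alpha$). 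Collecting: each integrand factor $a(g\tilde w_j)^{-\lambda+\rho}\cdot a(g\tilde w_j)^{-2\rho} = a(g\tilde w_j)^{-\lambda-\rho}$ combines with $f(g\tilde w_j\cdot b_0)$ to give $f_{j,\lambda}(g)$, and similarly for $h$, yielding exactly the claimed formula.

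The main obstacle I anticipate is the bookkeeping with the twist $\wtt(\tilde w_j)$: one must check carefully that $\alpha\big(\wtt(\tilde w_j)^{-1}\tau(g)^{-1}k\tilde w_j\big)^{\lambda-\rho} = \alpha\big(\tilde w_j g^{-1} k \tilde w_j\big)^{\lambda-\rho}$. This should follow from $\tau(\tilde w_j^{-1}) = \tilde w_j$ (equivalently $\wtt(\tilde w_j) = \theta(\tilde w_j^{-1})\cdot(\text{something in }M\bN\text{ or }MN)$) together with the left $M\bN$- and right $MN$-invariance of $\alpha$; the relation $\tilde w_j^4=1$ and $\tilde w_j^2\in Z_K(X_0)\subseteq L\subseteq M$ proved before Proposition~\ref{pro:5.5} is what makes the leftover $\tilde w_j^2$-factors harmless. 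A clean way to organize this is to first prove the auxiliary identity $\beta_\lambda(g\tilde w_j\cdot b_0, k\tilde w_j\cdot b_0) = a(g\tilde w_j)^{-\lambda+\rho}a(k\tilde w_j)^{-\lambda+\rho}\,\alpha(\tilde w_j g^{-1}k\tilde w_j)^{\lambda-\rho}$ as a lemma-style computation (parallel to the display in the proof of Lemma~\ref{le:4.4}), and then the theorem is immediate. As usual, the identity holds first for $\Re(\lambda(X_0))$ large where all integrals converge absolutely, and then for all $\lambda\in\fa_\C^*$ by meromorphic continuation of both sides.
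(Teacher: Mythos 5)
Your global strategy coincides with the paper's: insert \eqref{eq:SumOverHOrbits} in both variables (the support hypothesis kills all terms except the $j$-th) and reduce everything to the single kernel identity $\beta_\lambda(g\tilde{w}_j\cdot b_0,k\tilde{w}_j\cdot b_0)=a(g\tilde{w}_j)^{-\lambda+\rho}a(k\tilde{w}_j)^{-\lambda+\rho}\alpha(\tilde{w}_jg^{-1}k\tilde{w}_j)^{\lambda-\rho}$, which you state correctly at the end. The gap is in your proposed proof of that identity. The displayed ``key identity'' $\alpha_\lambda(g_1\cdot b_0,g_2\cdot b_0)=a(g_1)^{-\lambda+\rho}a(g_2)^{-\lambda+\rho}\alpha(g_1^{-1}g_2)^{\lambda-\rho}$ is false for general $g_1,g_2\in G$: from $g_1=k(g_1)m(g_1)a(g_1)n(g_1)$ one gets $k(g_1)^{-1}=m(g_1)a(g_1)n(g_1)g_1^{-1}$, hence in fact
\[
\alpha\big(k(g_1)^{-1}k(g_2)\big)=a(g_1)\,\alpha\big(n(g_1)\,g_1^{-1}g_2\big)\,a(g_2)^{-1},
\]
so the unipotent factor $n(g_1)\in N$ stands on the \emph{left}, where it cannot be discarded ($\alpha$ is left $M\bN$-invariant, not left $N$-invariant), and the exponent of $a(g_1)$ comes out as $+(\lambda-\rho)$, not $-(\lambda-\rho)$; already in $\SL(2,\R)$ with $g_1$ a nontrivial element of $N$ and $g_2\in\bN$ the two sides of your identity differ. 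Moreover your substitution uses $\wtt(g\tilde{w}_j)=\tau(g)\wtt(\tilde{w}_j)$, which is wrong for $g\in H\setminus K$: since $\tau(g)=g$ one has $\wtt(g)=\theta(g)\neq g$ in general. Carried out honestly, your route yields $\big[a(g\tilde{w}_j)\,\alpha\big(\wtt(n(g\tilde{w}_j))\,\tilde{w}_j\theta(g)^{-1}k\tilde{w}_j\big)\,a(k\tilde{w}_j)^{-1}\big]^{\lambda-\rho}$, i.e.\ the wrong $a$-power, a leftover $N$-factor and $\theta(g)^{-1}$ in place of $g^{-1}$; the two mistakes are being used to cancel each other against the known answer.

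What actually makes the computation work --- and this is the paper's proof --- is to keep the twist as $\tau$ applied to the $K$-component rather than $\wtt$ applied to the group representative. By \eqref{eq:BerezinKernel}, $\beta_\lambda(g\tilde{w}_j\cdot b_0,k\tilde{w}_j\cdot b_0)=\alpha\big(\tau(k(g\tilde{w}_j))^{-1}k(k\tilde{w}_j)\big)^{\lambda-\rho}$. Writing $k(g\tilde{w}_j)=g\tilde{w}_j\,m(g\tilde{w}_j)^{-1}a(g\tilde{w}_j)^{-1}\widetilde{n}$ with $\widetilde{n}\in N$ and applying $\tau$, the junk becomes $\tau(\widetilde{n})^{-1}a(g\tilde{w}_j)^{-1}\tau(m(g\tilde{w}_j))\in\bN A M$ sitting to the left of $\tilde{w}_jg^{-1}$: now it is removable by the left $M\bN$-invariance, and it is exactly $\tau(N)=\bN$ together with $\tau|_{\fa}=-\id$ that makes the unipotent factor vanish and flips the exponent to the required $a(g\tilde{w}_j)^{-\lambda+\rho}$. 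Combined with $\tau(g)=g$, $\tau(\tilde{w}_j)=\tilde{w}_j^{-1}$ and the right $MN$-invariance applied to $k(k\tilde{w}_j)$, this gives the auxiliary identity in two lines. Note that in this correct bookkeeping no stray $\tilde{w}_j^{2}$-factors appear, so the appeal to $\tilde{w}_j^{4}=1$ and $\tilde{w}_j^{2}\in L$ is not needed; its apparent necessity in your sketch is a symptom of the mis-threaded twist. Once the identity is proved this way, the remaining steps of your argument (orbit decomposition, absorbing $a(g\tilde{w}_j)^{-\lambda+\rho}a(g\tilde{w}_j)^{-2\rho}$ into $f_{j,\lambda}$, meromorphic continuation) are exactly the paper's.
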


\begin{proof}
By \eqref{eq:SumOverHOrbits} we have
\begin{multline*}
 \herm{f}{h}_\lambda = \int_{H/H_j}\int_{H/H_j} \beta_\lambda(g\tilde{w}_j\cdot b_0,k\tilde{w}_j\cdot b_0)f(g\tilde{w}_j\cdot b_0)\overline{h(k\tilde{w}_j\cdot b_0)}\\
 a(g\tilde{w}_j)^{-2\rho}a(k\tilde{w}_j)^{-2\rho}\,dg\,dk\, ,
\end{multline*}
and by \eqref{eq:BerezinKernel} the Berezin kernel is given by
$$ \beta_\lambda(g\tilde{w}_j\cdot b_0,k\tilde{w}_j\cdot b_0) = \alpha(\tau(k(g\tilde{w}_j))^{-1}k(k\tilde{w}_j))^{\lambda-\rho}. $$
Write $g\tilde{w}_j=k(g\tilde{w}_j)m(g\tilde{w}_j)a(g\tilde{w}_j)n(g\tilde{w}_j)$, then
$$ k(g\tilde{w}_j)=g\tilde{w}_jm(g\tilde{w}_j)^{-1}a(g\tilde{w}_j)^{-1}\widetilde{n}(g\tilde{w}_j) $$
for some $\widetilde{n}(g\tilde{w}_j)\in N$ and similar for $k\tilde{w}_j$. Note that $\tau(M)=M$, $\tau|_\fa=-\id_\fa$, $\tau(N)=\overline N$ and $\tau(\tilde{w}_j)=\tilde{w}_j^{-1}$. This implies
\[ \alpha(\tau(k(g\tilde{w}_j))^{-1}k(k\tilde{w}_j))^{\lambda-\rho} = \alpha(\tilde{w}_jg^{-1}k\tilde{w}_j)^{\lambda-\rho}a(g\tilde{w}_j)^{-\lambda+\rho}a(k\tilde{w}_j)^{-\lambda+\rho} \]
where we have used that $\tau(g)=g$ since $g\in H$. Now the claim follows by the definition of $f_{j,\lambda}$ and $h_{j,\lambda}$.
\end{proof}

\section{Reflection Positivity}\label{sec:ReflectionPositivity} 
In this section we recall the basic definitions related to \textit{reflection positivity}, formulated so that it fits our setup. For
basic references we point to \cite{JO00,NO14,NO17b}. For other aspects of reflection positivity we
would like to name \cite{JZ17,JJ16,JJ17,JP15a,JF15b,KL83}.

Let $(\cE,\pi)$ be a Casselman--Wallach representation of $G$ on a Fr\'{e}chet space $\cE$ (i.e. $\pi$ is smooth, admissible and of moderate growth). Assume we are given a Hermitian form $(\cdot,\cdot)$ on $\cE$ which is invariant under $\pi\otimes\pi^\theta$, where $\pi^\theta=\pi\circ\theta$.

\begin{Ex}
Let $P=MAN\subseteq G$ be any parabolic subgroup. Then for the corresponding generalized principal series representation $\pi=\pi_\lambda=\Ind_P^G(1\otimes e^\lambda\otimes 1)$ with $\lambda\in\fa^*$ the standard intertwining operator $J(\lambda):\pi_\lambda\to\pi_{-\lambda}^\theta$ can be used to define such a Hermitian form by
$$ (f_1,f_2) := \ip{J(\lambda)f_1}{f_2}_{L^2(K/K\cap M)} = \int_K J(\lambda)f_1(x)\,\overline{f_2(x)}\,dx. $$
This follows similarly as in Theorem~\ref{th:dual} and \ref{thm:MeromorphicContinuationIntertwiningOperators}.
\end{Ex}

Now assume that there exist:
\begin{enumerate}
\item\label{enum:RePoAss1} an isometric involution $\wt\tau_*:\cE\to\cE$ such that
\begin{equation*}
 \wt\tau_*\circ\pi^\theta (g) = \pi(\tau(g))\circ\wt\tau_*,
\end{equation*}
\item\label{enum:RePoAss2} a closed $\pi(H)$-invariant and $d\pi(\fg)$-invariant subspace $\cE_+\subseteq\cE$ such that $(\wt\tau_*f,f)\geq0$ for all $f\in\cE_+$.
\end{enumerate}
We refer to assumption~\ref{enum:RePoAss2} as \textit{reflection positivity}.

Under the above assumptions, we consider on $\cE_+$ the positive semidefinite Hermitian form
$$ \herm{f_1}{f_2} := (\wt\tau_*f_1,f_2). $$
Clearly $\herm{\cdot}{\cdot}$ is $\pi\otimes\pi^\tau$-invariant. Let
\[ \cN :=\{f\in\cE_+\mid \herm{f}{f}=0\}, \]
denote by $\wE$ be the completion of $\cE_+/\cN$ with respect to $\herm{\cdot}{\cdot}$ and by $q:\cE_+\to\wE$ the canonical projection. We
also write $\widehat{f}=q(f)$. For a continuous linear operator $T:\cE\to \cE$ with $T(\cE_+)\subset\cE_+$ and $T(\cN)\subseteq\cN$ we
define $\widehat{T}: \wE\to \wE$ by $T(\widehat{f})=\widehat{T(f)}$. Then $T$ is linear and continuous.

It is clear that $\pi(H)\cN\subseteq\cN$ and we therefore get a unitary representation of $H$ on $\wE$. Further, it follows that $d\pi (\fg )\cN\subseteq\cN$ and therefore
$$ d\pi^c(X+iY) := d\pi(X)+i\,d\pi(Y), \qquad X+iY\in\fh+i\fq=\fg^c $$
defines an infinitesimally unitary representation of $\fg^c$ on $\cE_+/\cN$. The question is whether this representation integrates to a unitary representation of $G^c$, or more generally its universal cover $\widetilde{G}^c$, on $\wE$.

\begin{Ex}
We can take $\pi=\pi_\lambda$ with $\cE=C^\infty(\cB)$, then the involution
$$ \widetilde{\tau}_*: \cE\to\cE, \quad \wtt_*f(b) = f(\wtt(b)) $$
satisfies assumption \ref{enum:RePoAss1}. Further, for any $H$-orbit $\cO_j=H\cdot\tilde{w}_jb_0$ the subspace $\cE_+=C_c^\infty(\cO_j)$ satisfies assumption \ref{enum:RePoAss2} if and only if
$$ \langle f,f\rangle_\lambda = \int_{\cO_j}\int_{\cO_j} \beta_\lambda(x,y)f(x)\overline{f(y)}\,dx\,dy \geq 0 \qquad \forall\,f\in\cE_+. $$
We now determine for each open $H$-orbit $\cO_j$ the parameters $\lambda$ such that reflection positivity holds. We distinguish the two cases of Riemannian and non-Riemannian open $H$-orbits.
\end{Ex}

\section{The Riemannian open $H$-orbits}\label{sec:HighestWeightRepresentations}

We show that on the Riemannian open $H$-orbits the Berezin form is reflection positive if and only if $\lambda$ is contained in the so-called Wallach set which is the union of an unbounded interval with a finite number of discrete points. In this case, the representation $d\pi_\lambda^c$ of $\fg^c$ integrates to an irreducible unitary representation of $\widetilde{G}^c$ on $\wE$ which we identify with a unitary highest representation of scalar type. Most of this material can also be found in \cite{O00}. We further provide an explicit embedding (as an integral operator) of this representation into $L^2(\widetilde{G}^c/\widetilde{H})$, where $\widetilde{H}\subseteq\widetilde{G}^c$ denotes the preimage of $H$ under the covering map $\widetilde{G}^c\to G^c$.

\subsection{The Highest Weight Representations}

Consider the open dense Bruhat cell $\fp^c_-\subseteq G_\C/K^c_\C P^c_+$. Then the orbit of $G^c$ through the base point of $G_\C/K^c_\C P^c_+$ is contained in $\fp^c_-$ and forms a bounded symmetric domain $\cD\subseteq\fp^c_-$. We have $\cD\simeq G^c/K^c$ as $G^c$-spaces. Since most of the representations we construct only live on the universal cover $\widetilde{G}^c$ of $G^c$ we identify $\cD\simeq\widetilde{G}^c/\widetilde{K}^c$.

For $\mu\in\fa_\C^*=(\C Z_0)^*$ denote by $\chi_\mu:\widetilde{K}^c\to\C^\times$ the character whose derived character $d\chi_\mu$ on $\fk^c$ agrees with $\mu$ on $\R Z_0$ and is trivial on $Z_0^\perp\subseteq\fk^c$, the orthogonal complement of $Z_0$ in $\fk^c$ with respect to the Killing form of $\fg$. We consider the kernel function
\begin{equation}\label{eq:Klambda}
 K_\mu(z,w) = \chi_\mu\big(k^c(\exp(-\overline{w})\exp z)\big)^{-1}, \qquad z,w\in\cD,
\end{equation}
where $k^c(g)$ is as in Section~\ref{sec:BoundedDomain}. Here $\overline{w}$ denotes the complex conjugation on $\fg_\C$ with respect to the real form $\fg^c$, then $\overline{w}\in\fp^c_+$. In this notation $\overline{w}=\tau(w)$. Note that $\exp(-\overline{w})\exp z\in P^c_-K^c_\C P^c_+$ for all $z,w\in\cD$, so that the kernel $K_\mu$ is well-defined for all $\mu\in\fa_\C^*$.

\begin{Thm}[{see \cite{B75,VR76,W79}}]\label{thm:WallachSet}
There exists a constant $c>0$ such that the kernel $K_\mu(z,w)$ is positive semidefinite if and only if $\mu(iZ_0)=-\mu(X_0)$ is contained in the so-called Berezin--Wallach set
$$ \cW = (-\infty,-(r-1)c)\cup\{-jc:j=0,\ldots,r-1\}. $$
\end{Thm}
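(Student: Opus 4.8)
The plan is to reduce positivity of $K_\mu$ to an elementary sign analysis of products of Pochhammer symbols, via the classical expansion of powers of the generic norm of the bounded symmetric domain $\cD=G^c/K^c$. (When $\fg^c$ is not simple -- the group case -- $\cD$ is a product of two mutually conjugate irreducible domains; one then runs the argument on each factor with the common parameter below, the rank $r$ being that of a single factor, $=\rank(H/L)$ as in the statement.) First I would pass to the Harish-Chandra realization $\cD\subseteq\fp^c_-$ and identify the kernel: since $\chi_\mu$ is a power of the basic character of $\widetilde{K}^c$, the triangular decomposition $P^c_-K^c_\C P^c_+$ shows that
\[
 K_\mu(z,w)=\Delta(z,w)^{-\nu},
\]
where $\Delta$ is the normalized generic norm of $\cD$ (so $\Delta(0,0)=1$, and $\Delta$ is holomorphic in $z$, antiholomorphic in $w$) and $\nu=\nu(\mu)$ is an affine function of $\mu$, normalized so that the parameter $\mu(iZ_0)=-\mu(X_0)$ of the statement corresponds to $-\nu$. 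Being real-analytic and holomorphic in its first variable with $K_\mu(0,0)=1$, the kernel $K_\mu$ is positive semidefinite on $\cD\times\cD$ if and only if the (infinite) matrix of its Taylor coefficients at $(0,0)$ is positive semidefinite, and positive definite if and only if that matrix is positive definite.

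Next I would invoke the Hua--Faraut--Kor\'anyi expansion (the analytic substance behind the references \cite{B75,VR76,W79}; equivalently, the Schmid/Peter--Weyl decomposition of the polynomials on $\fp^c_-$ under $\widetilde{K}^c$ together with the evaluation of the Fischer-inner-product coefficients)
\[
 \Delta(z,w)^{-\nu}=\sum_{\mathbf m}(\nu)_{\mathbf m}\,K_{\mathbf m}(z,w),
\]
the sum over partitions $\mathbf m=(m_1\ge\cdots\ge m_r\ge0)$, where each $K_{\mathbf m}$ is the positive definite reproducing kernel of the multiplicity-one irreducible $\widetilde{K}^c$-submodule $\mathcal{P}_{\mathbf m}$ of the polynomials on $\fp^c_-$, the distinct $\mathcal{P}_{\mathbf m}$ are mutually orthogonal, and
\[
 (\nu)_{\mathbf m}=\prod_{j=1}^{r}\bigl(\nu-(j-1)c\bigr)_{m_j}=\prod_{j=1}^{r}\ \prod_{k=0}^{m_j-1}\bigl(\nu-(j-1)c+k\bigr)
\]
is the generalized Pochhammer symbol, $c>0$ being the constant of the statement (half of a suitable root multiplicity of $\cD$). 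This expansion exhibits the Taylor coefficient matrix in diagonal form with respect to the $\widetilde{K}^c$-action, so $K_\mu$ is positive semidefinite iff $(\nu)_{\mathbf m}\ge0$ for every $\mathbf m$, and positive definite iff $(\nu)_{\mathbf m}>0$ for every $\mathbf m$.

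It then remains to decide for which $\nu$ all $(\nu)_{\mathbf m}$ are nonnegative. If $\nu>(r-1)c$, every factor $\nu-(j-1)c+k$ is positive, so $K_\mu$ is positive definite. If $\nu<0$, the partition $(1,0,\dots,0)$ gives $(\nu)_{\mathbf m}=\nu<0$. If $(j_0-1)c<\nu<j_0c$ for some $1\le j_0\le r-1$, then $j_0+1\le r$, and the partition with $m_1=\cdots=m_{j_0+1}=1$ and $m_j=0$ otherwise gives $(\nu)_{\mathbf m}=\prod_{j=1}^{j_0+1}\bigl(\nu-(j-1)c\bigr)$, which has $j_0$ positive factors and the single negative factor $\nu-j_0c$, hence is $<0$. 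These three cases cover every $\nu\notin\{0,c,\dots,(r-1)c\}\cup\bigl((r-1)c,\infty\bigr)$. Conversely, for $\nu=\ell c$ with $\ell\in\{0,\dots,r-1\}$: any partition with $m_{\ell+1}\ge1$ has $(\nu)_{\mathbf m}=0$, since $\bigl(\nu-\ell c\bigr)_{m_{\ell+1}}$ already vanishes at $k=0$; and any partition with $m_{\ell+1}=0$ (so $m_j=0$ for all $j>\ell$, by monotonicity) has $(\nu)_{\mathbf m}=\prod_{j=1}^{\ell}\prod_{k=0}^{m_j-1}\bigl((\ell-j+1)c+k\bigr)>0$, every factor being $\ge c>0$; thus at $\nu=\ell c$ the kernel is positive semidefinite but not definite. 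Translating the admissible set $\{0,c,\dots,(r-1)c\}\cup\bigl((r-1)c,\infty\bigr)$ of parameters $\nu$ back into $\mu(iZ_0)$ gives exactly $\cW=(-\infty,-(r-1)c)\cup\{-jc:j=0,\dots,r-1\}$, which is the theorem.

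The main obstacle, and essentially the only non-elementary ingredient, is the Hua--Faraut--Kor\'anyi expansion with the explicit Pochhammer coefficients: this is where the Jordan-algebraic structure of $\fp^c_\pm$ enters, and it also carries the bookkeeping pinning down the constant $c$ and the precise affine dependence $\nu=\nu(\mu)$; once it is available, the sign analysis above is immediate. A conceptually equivalent but no shorter route is to observe that positive semidefiniteness of $K_\mu$ says precisely that the reproducing kernel Hilbert space generated by $K_\mu$ is a nonzero $\widetilde{G}^c$-invariant space of holomorphic functions on $\cD$, i.e. that the scalar highest weight module of $\fg^c$ attached to $\mu$ is unitarizable, and to cite the Enright--Howe--Wallach classification; since that theorem is itself proved by the same kind of norm computation, I would present the direct argument.
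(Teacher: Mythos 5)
Your proposal is correct, but note that the paper itself offers no proof of Theorem~\ref{thm:WallachSet}: it is quoted with the references \cite{B75,VR76,W79}, so the comparison has to be with those sources rather than with an internal argument. Your route --- identifying $K_\mu(z,w)$ with a power $h(z,w)^{-\nu}$ of the normalized generic norm and expanding via the Faraut--Kor\'anyi binomial formula $h^{-\nu}=\sum_{\mathbf m}(\nu)_{\mathbf m}K_{\mathbf m}$, then reducing positivity of the kernel to nonnegativity of all generalized Pochhammer symbols --- is the now-standard streamlined proof (Faraut--Kor\'anyi, \emph{Analysis on Symmetric Cones}, Ch.~XIII), and your sign analysis, including the degenerate points $\nu=\ell c$, is complete and correct. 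The cited sources argue differently: Wallach proceeds algebraically through unitarizability of scalar highest weight modules, Vergne--Rossi analytically through Laplace transforms of Riesz measures on the symmetric cone (tube type), and Berezin via quantization; what your approach buys is that everything collapses to one expansion plus elementary sign bookkeeping, at the price of taking that expansion (together with the Jordan-theoretic identification of $c=a/2$ and of $\nu$ as a positive multiple of $-\mu(iZ_0)$) as a black box, which you candidly flag. If you write this up, three small points deserve a sentence each: positivity forces the parameter to be real, since otherwise $K_\mu$ is not Hermitian; the dependence $\nu(\mu)$ is linear with $\nu(0)=0$, which is exactly why any normalization can be absorbed into the unspecified constant $c>0$ of the statement; and in the group case the product kernel $h_1^{-\nu}h_2^{-\nu}$ is positive semidefinite if and only if each factor is, which follows from your expansion by specializing the partition in one factor to $\mathbf m=0$.
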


In the case where $K_\mu$ is positive semidefinite, we can form a Hilbert space $\cH^c_\mu$ of holomorphic functions on $\cD$ with reproducing kernel $K_\mu(z,w)$. More precisely, we form the linear span of all functions $z\mapsto K_\mu(z,w)$ ($w\in\cD$) and endow it with the inner product
$$ \inner{K_\mu(\cdot,w_1)}{K_\mu(\cdot,w_2)}_\mu^c = K_\mu(w_2,w_1). $$
Its completion with respect to $\inner{\cdot}{\cdot}_\mu^c$ is a Hilbert space $\cH_\mu^c$ of holomorphic functions on $\cD$ with reproducing kernel $K_\mu(z,w)$. On this Hilbert space there exists an irreducible unitary representation $(\rho_\mu,\cH_\mu^c)$ of $\widetilde{G}^c$ given by
\begin{equation}
 \rho_\mu(g)f(z) = J_\mu(g^{-1},z)^{-1}f(g^{-1}\cdot z),
\end{equation}
where for $g\in\widetilde{G}^c$ and $z\in\cD\subseteq\fp_-^c$ we put
$$ g\cdot z = \log(p_-(g\exp(z))), \qquad \mbox{and} \qquad J_\mu(g,z) = \chi_\mu(g\exp(z)). $$
These representations are highest weight representations of scalar type, and they form the so-called \textit{analytic continuation of the holomorphic discrete series}. We note that for $\mu(iZ_0)\ll0$ the representation $(\rho_\mu,\cH_\mu^c)$ belongs to the holomorphic discrete series and the $\widetilde{G}^c$-invariant inner product on $\cH_\mu^c$ is the $L^2$-inner product
$$ \inner{f}{h}_\mu^c = \int_\cD f(z)\overline{h(z)}\,d\nu_\mu(z), $$
where $d\nu_\mu(z)=K_\mu(z,z)^{-1}\,dz$ and $dz$ denotes a suitably normalized $\widetilde{G}^c$-invariant measure on $\cD$.

\subsection{Positivity of the Berezin form}

Using Theorem~\ref{thm:WallachSet} we now determine for which parameters $\lambda$ the Berezin form $\inner{\cdot}{\cdot}_\lambda$ restricted to the open $H$-orbit $\cO_0=H\cdot b_0\subseteq\cB$ is positive semidefinite. Recall that in the non-compact picture the Berezin form is given by the kernel function $\kappa_\lambda$ on $\overline{N}\times\overline{N}$ (see Section~\ref{sec:BerezinNonCptPicture}). For the following statement we identify $\overline{\fn}\simeq\overline{N}$.

\begin{Lemma}\label{lem:RestrictionOfKmu}
The restriction of $K_\mu$ to $\overline{\fn}\subseteq\fp^c_-$ is equal to $\kappa_\lambda$ for $\lambda=-\mu+\rho$.
\end{Lemma}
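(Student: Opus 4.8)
The plan is to unwind both sides of the claimed identity to a statement about the $A$-part (equivalently the $K^c_\C$-part) of the same element of $G_\C$, and then invoke Lemma~\ref{lem:OpenDenseDecompositionsAgree} to match the two triangular decompositions. Concretely, fix $X,Y\in\overline{\fn}$ and set $x=\exp X$, $y=\exp Y$, viewed both as elements of $\overline{N}\subseteq G$ and as points $z=X$, $w=Y$ of $\cD\cap\overline{\fn}$. On the Hermitian side, the definition \eqref{eq:Klambda} gives $K_\mu(z,w)=\chi_\mu\big(k^c(\exp(-\overline w)\exp z)\big)^{-1}$ with $\overline w=\tau(w)$; since $Y\in\overline{\fn}=\fg_{-1}$ we have $\tau(Y)=-\theta(Y)\in\fn$, so $\exp(-\overline w)=\exp(\tau(\exp(-Y)))=\tau(\exp(-Y))\in N$ lies in $P^c_+\cap G$. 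On the Berezin side, \eqref{def:kappaL} reads $\kappa_\lambda(x,y)=\alpha\big(\tau(x)^{-1}y\big)^{\lambda-\rho}$, and here $\tau(x)^{-1}=\tau(\exp(-X))\in N$ as well. So both kernels are built from the group element $g:=\tau(x)^{-1}y=\tau(\exp(-X))\exp(Y)\in G\subseteq G_\C$, and the task is to compare the $A$-component of its $\overline{N}MAN$-decomposition with the $K^c_\C$-component of its $P^c_-K^c_\C P^c_+$-decomposition.

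First I would write $g=\overline\nu(g)\,\mu(g)\,\alpha(g)\,\nu(g)$ in $\overline{N}MAN$, which is legitimate for $X,Y$ small (the identity extends by analyticity/meromorphy in the end, but on $\cD\cap\overline{\fn}$ it is fine). By Lemma~\ref{lem:OpenDenseDecompositionsAgree} this is exactly the $P^c_-K^c_\C P^c_+$-decomposition, with $k^c(g)=\mu(g)\alpha(g)\in G_0=MA$. Therefore $\chi_\mu(k^c(g))=\chi_\mu(\mu(g))\chi_\mu(\alpha(g))=\chi_\mu(\alpha(g))$, because $d\chi_\mu$ is trivial on $Z_0^\perp\supseteq\fm$. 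Writing $\alpha(g)=\exp(tX_0)$ and recalling $Z_0=iX_0$, the normalization of $\chi_\mu$ gives $\chi_\mu(\alpha(g))=e^{t\,\mu(X_0)}=\alpha(g)^{\mu|_{\fa}}$ where I identify $\fa^*\simeq\fa_\C^*$ as in the paper. Hence $K_\mu(z,w)=\alpha(g)^{-\mu}$.

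Next I would check that $\exp(-\overline w)\exp z$ and $g=\tau(x)^{-1}y$ are literally the same element: indeed $\exp(-\overline w)=\tau(\exp(-Y))=\tau(y)^{-1}$ — wait, I must be careful, $\tau(y)^{-1}=\tau(\exp(-Y))$, and $-\overline w=-\tau(w)=-\tau(Y)$ so $\exp(-\overline w)=\exp(-\tau(Y))=\tau(\exp(-Y))$. Thus $\exp(-\overline w)\exp z=\tau(\exp(-Y))\exp(X)$, whereas $\tau(x)^{-1}y=\tau(\exp(-X))\exp(Y)$. These differ by swapping $X\leftrightarrow Y$; but $\kappa_\lambda(x,y)=\alpha(\tau(x)^{-1}y)^{\lambda-\rho}$ and, as noted below \eqref{de:KerAlpha}, $\alpha_\lambda$ (hence the relevant $A$-projection) is symmetric, so $\alpha(\tau(x)^{-1}y)=\alpha(\tau(y)^{-1}x)$ and the swap is harmless. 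Putting this together, $K_\mu(z,w)=\alpha(g)^{-\mu}$ and $\kappa_\lambda(x,y)=\alpha(g)^{\lambda-\rho}$, so the two agree precisely when $\lambda-\rho=-\mu$, i.e. $\lambda=-\mu+\rho$, which is the assertion.

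The main obstacle I anticipate is purely bookkeeping of the various involutions and identifications: confirming that $\chi_\mu$ restricted to the $MA$-part kills $M$ and computes the correct power of $\alpha(g)$ (the normalization $\mu(iZ_0)=-\mu(X_0)$ and the identification $\fa_\C^*\simeq(\C Z_0)^*$ must be tracked carefully so that no sign or factor of $i$ is dropped), and verifying that the open-dense domains where the $\overline{N}MAN$ and $P^c_-K^c_\C P^c_+$ decompositions are valid genuinely contain $\cD\cap\overline{\fn}$ — the latter is exactly the content of Lemma~\ref{lem:OpenDenseDecompositionsAgree} together with $P^c_-\cap G=\overline N$, $K^c_\C\cap$(relevant set)$=MA$, $P^c_+\cap G=N$, so there is nothing deep, only care. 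Once the identifications are pinned down, the statement is immediate.
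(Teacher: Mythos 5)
Your proposal is correct and follows essentially the same route as the paper: identify the $\overline{N}MAN$- and $P^c_-K^c_\C P^c_+$-decompositions via Lemma~\ref{lem:OpenDenseDecompositionsAgree}, use that $\chi_\mu$ is trivial on the $M$-part, match $\alpha(g)^{-\mu}$ with $\alpha(g)^{\lambda-\rho}$, and handle the $x\leftrightarrow y$ swap by the symmetry $\alpha(\tau(x)^{-1}y)=\alpha(\tau(y)^{-1}x)$. The only cosmetic difference is that the paper justifies this last symmetry directly from $\tau(N)=\overline{N}$ and $\tau(X_0)=-X_0$ (apply $\tau$ to the decomposition and invert), rather than quoting the symmetry of $\alpha_\lambda$ on $\cB\times\cB$, which is stated for $K$-arguments.
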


\begin{proof}
For $x,y\in\overline{\fn}$ we have
$$ \kappa_\lambda(x,y) = \alpha(\tau(x)^{-1}y)^{\lambda-\rho} = \alpha(\tau(y)^{-1}x)^{\lambda-\rho} $$
since $\tau(N)=\overline{N}$ and $\tau(X_0)=-X_0$. Further, by Lemma~\ref{lem:OpenDenseDecompositionsAgree} we have $k^c(g)=\mu(g)\alpha(g)$ for all $g\in\overline{N}G_0N$ and hence
$$ \chi_\mu(g)^{-1} = \alpha(g)^{\lambda-\rho} $$
for $\lambda=-\mu+\rho$. This shows the claim.
\end{proof}

Now consider the open $H$-orbit $\cO_0=H\cdot b_0$ through the base point $b_0=e\Pmax\in\cB$, then $\cO_0\simeq H/L$ is a Riemannian symmetric space for $H$. Since $MAN=G\cap K_\C^cP^c_+$ and $H=G\cap G^c$, we can view $\cO_0$ as the $H$-orbit through the origin $0$ in the standard bounded realization of $\cD\simeq G^c/K^c$ in $\fp^c_-$. Then $\tau$ induces a conjugation on $\cD$ whose fixed points $\cD_\R=\cD\cap\overline{\fn}\subseteq\cD$ form a totally real submanifold, and we have $\cO_0\simeq\cD_\R$.

From this discussion it follows that $\cO_0\subseteq\overline{N}\cdot b_0$, so that the positivity of $\inner{\cdot}{\cdot}_\l$ on $\cO_0$ can be detected in the non-compact picture. As explained in Section~\ref{sec:BerezinNonCptPicture}, the Berezin form $\inner{\cdot}{\cdot}_\l$ is in the non-compact picture on $\overline{\fn}$ given by the kernel $\kappa_\lambda$.

\begin{Prop}\label{prop7.3}
The restriction of the Berezin form $\inner{\cdot}{\cdot}_\l$ to the Riemannian open orbit $\cO_0$ is positive semidefinite if and only if $K_\mu$ is positive semidefinite for $\mu=-\l+\rho$. This is precisely the case if $(\lambda-\rho)(X_0)$ is contained in the Berezin--Wallach set $\cW$.
\end{Prop}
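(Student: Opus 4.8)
The plan is to reduce the positivity question for the Berezin form on $\cO_0$ to the positivity of the reproducing kernel $K_\mu$, and then invoke Theorem~\ref{thm:WallachSet}. The key point is already essentially contained in Lemma~\ref{lem:RestrictionOfKmu}, so the proof is mostly a matter of assembling the pieces carefully.

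First I would recall from the discussion preceding the proposition that $\cO_0\subseteq\overline N\cdot b_0$, so that in the non-compact picture the Berezin form restricted to $\cO_0$ is given by the kernel $\kappa_\lambda$ on $\overline{\fn}\times\overline{\fn}$ via Lemma~\ref{le:4.4} (together with the rewriting of $\langle\cdot,\cdot\rangle_\lambda$ in terms of $f_\lambda$, $h_\lambda$, which are obtained from $f,h$ by multiplication by a positive function). Hence for $f,h\in C_c^\infty(\cO_0)$ one has
\[
\langle f,h\rangle_\lambda = \int_{\overline{\fn}}\int_{\overline{\fn}} \kappa_\lambda(x,y) f_\lambda(x)\overline{h_\lambda(y)}\,dx\,dy,
\]
and since $f\mapsto f_\lambda$ is a bijection of $C_c^\infty(\cO_0)$ onto $C_c^\infty(\cD_\R)$ (using $\cO_0\simeq\cD_\R$ under the identification $\overline N\cdot b_0\simeq\overline{\fn}$), positive semidefiniteness of $\langle\cdot,\cdot\rangle_\lambda$ on $C_c^\infty(\cO_0)$ is equivalent to positive semidefiniteness of the kernel $\kappa_\lambda$ on $\cD_\R\times\cD_\R$. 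By Lemma~\ref{lem:RestrictionOfKmu}, $\kappa_\lambda$ is exactly the restriction of $K_\mu$ to $\overline{\fn}$ for $\mu=-\lambda+\rho$, so $\kappa_\lambda|_{\cD_\R\times\cD_\R}$ is the restriction of $K_\mu$ to the totally real submanifold $\cD_\R\subseteq\cD$.

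The remaining step is to argue that $K_\mu$ is positive semidefinite on $\cD\times\cD$ if and only if its restriction to $\cD_\R\times\cD_\R$ is. The direction ``$\Rightarrow$'' is trivial. For ``$\Leftarrow$'', I would use that $K_\mu(z,w)$ is holomorphic in $z$ and antiholomorphic in $w$, that $\cD_\R$ is a totally real submanifold of the connected complex manifold $\cD$ with $\dim_\R\cD_\R=\dim_\C\cD$, and that $\tau$ acts as the corresponding complex conjugation with $K_\mu(\tau z,\tau w)=\overline{K_\mu(w,z)}$ (which follows from $\overline{\tau(z)}=z$ and the defining formula \eqref{eq:Klambda}). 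A kernel which is Hermitian and holomorphic--antiholomorphic and positive semidefinite on a totally real, totally real-dimensional, uniqueness set must be positive semidefinite on all of $\cD$: if it were not, some finite matrix $(K_\mu(z_i,z_j))$ would have a negative eigenvalue, and by analytic continuation (moving the $z_i$ to $\cD_\R$, using that the relevant determinantal minors are holomorphic and $\cD_\R$ is a set of uniqueness) one would produce a point configuration in $\cD_\R$ witnessing failure of positivity there. Concretely this is the statement that a positive semidefinite holomorphic kernel is determined by its restriction to a totally real form; this is standard in the theory of reproducing kernel Hilbert spaces of holomorphic functions, and I would cite it rather than reprove it, or alternatively invoke directly that $\cH_\mu^c$ restricts isometrically to functions on $\cD_\R$.

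Combining these equivalences with Theorem~\ref{thm:WallachSet}, which says $K_\mu$ is positive semidefinite exactly when $\mu(X_0)=-\mu(iZ_0)$ lies in $\cW$, and translating via $\mu=-\lambda+\rho$ so that $-\mu(X_0)=(\lambda-\rho)(X_0)$, yields the claim that $\langle\cdot,\cdot\rangle_\lambda$ is positive semidefinite on $\cO_0$ precisely when $(\lambda-\rho)(X_0)\in\cW$. I expect the main (and only non-bookkeeping) obstacle to be the holomorphic-extension argument that positivity on the totally real form $\cD_\R$ propagates to all of $\cD$; everything else is a direct chain of identifications using Lemmas~\ref{le:4.4}, \ref{lem:OpenDenseDecompositionsAgree} and \ref{lem:RestrictionOfKmu} and the geometry of $\cD_\R\subseteq\cD\simeq G^c/K^c$ already set up in Sections~\ref{sec:BoundedDomain} and~\ref{sec:HighestWeightRepresentations}.
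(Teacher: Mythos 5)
Your proposal is correct and takes essentially the same route as the paper: the paper's proof likewise consists of Lemma~\ref{lem:RestrictionOfKmu} (identifying $\kappa_\lambda$ with the restriction of $K_\mu$ to the real form $\cD_\R$) followed by the general fact that a Hermitian holomorphic kernel is positive semidefinite on $\cD$ if and only if its restriction to the maximal totally real submanifold $\cD_\R$ is, which it cites as \cite[Theorem A.1]{NO14} rather than reproving. Note only that your informal sketch of that propagation step (analytically continuing a negative eigenvalue of a minor back to $\cD_\R$) would need more care, since inequalities do not continue analytically, but deferring to the standard cited result, as you indicate and as the paper does, closes the argument.
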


\begin{proof}
By Lemma~\ref{lem:RestrictionOfKmu} the kernel $\kappa_\lambda$ of $\inner{\cdot}{\cdot}_\l$ is the restriction of $K_\mu$ to the real form $\cD_\R=\cD\cap\overline{\fn}\simeq H\cdot b_0$ of $\cD$. Then the statement is a consequence of \cite[Theorem A.1]{NO14}.
\end{proof}

\begin{Rem}
In the case where $\cD$ is a tube type bounded symmetric domain, there is another Riemannian open $H$-orbit in $\cB$, namely $\cO_r=H\cdot\tilde{w}_rb_0$ where $r=\rank(H/L)$. Note that $\tilde{w}_r^2\in L$ so that multiplication by $\tilde{w}_r$ defines an isomorphism $\cO_0\to\cO_r$. This isomorphism preserves the Berezin form and we obtain that $\inner{\cdot}{\cdot}_\l$ is positive semidefinite on $\cO_r$ if and only if it is positive semidefinite on $\cO_0$, which is the case for $(\lambda-\rho)(X_0)\in\cW$.
\end{Rem}

\subsection{The Intertwining Operator into the Highest Weight Representation}\label{se:7.3}

Now assume that $\lambda\in\fa^*$ such that the Berezin form $\inner{\cdot}{\cdot}_\l$ is positive semidefinite on the open $H$-orbit $\cO_0$. Then for $\cE=C^\infty(\cB)$ and $\cE_+=C_c^\infty(\cO_0)$ the construction in Section~\ref{sec:ReflectionPositivity} yields a pre-Hilbert space $\cE_+/\cN$ on whose completion $\wE$ the group $H$ acts unitarily. Further, $\fg^c$ acts on $\cE_+/\cN$ by infinitesimally unitary operators. We now show that this representation integrates to an irreducible unitary representation of $\widetilde{G}^c$ on $\wE$, and we identify this representation with one of the $(\rho_\mu,\cH_\mu^c)$'s.

Let $\mu=-\lambda+\rho$ and identify $\cO_0\simeq\cD_\R$. For $f\in C_c^\infty(\cO_0)$ and $z\in \cD$ we define
\[T_\mu f(z) := \int_{\cD_\R} K_\mu (z,x)f(x)\, dx, \]
where $dx$ denotes the Lebesgue measure on the open subset $\cD_\R\subseteq\overline{\fn}$.

\begin{Thm}
$T_\mu$ factors to a unitary isomorphism $\wE\to \cH_\mu$. Furthermore $T_\mu$ is a $\fg^c$-intertwining operator. In particular, the representation of $\fg^c$ on $\cE_+/\cN$ integrates to an irreducible unitary representation of $\widetilde{G}^c$ such that $T_\mu$ is an equivalence of representations.
\end{Thm}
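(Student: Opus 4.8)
The plan is to establish in turn that $T_\mu$ descends to an isometry $\wE\to\cH_\mu^c$, that this isometry is onto, and that it intertwines the $\fg^c$-actions; the integration statement and the equivalence of representations then follow by transporting the structure of $(\rho_\mu,\cH_\mu^c)$ across $T_\mu$.

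\emph{Isometry and surjectivity.} Identify $\cO_0\simeq\cD_\R=\cD\cap\overline{\fn}$ and pass to the non-compact picture, so that by Lemma~\ref{le:4.4} the Berezin form restricted to $\cO_0$ is $\inner{f}{h}_\lambda=\int_{\cD_\R}\int_{\cD_\R}\kappa_\lambda(x,y)f(x)\overline{h(y)}\,dx\,dy$ with $dx$ the Haar measure of $\bN$ written in exponential coordinates, i.e.\ Lebesgue measure on $\overline{\fn}$. For $f\in C_c^\infty(\cD_\R)$, the bound $|g(x)|\le K_\mu(x,x)^{1/2}\|g\|$ valid for $g\in\cH_\mu^c$ (Cauchy--Schwarz and the reproducing property), together with continuity of $K_\mu(x,x)$, shows that $g\mapsto\int_{\cD_\R}\overline{f(x)}g(x)\,dx$ is a bounded conjugate-linear functional on $\cH_\mu^c$; its Riesz vector lies in $\cH_\mu^c$ and, evaluated against the kernel functions, is seen to be $T_\mu f$, so $T_\mu f\in\cH_\mu^c$. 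Expanding $\inner{T_\mu f}{T_\mu h}_\mu^c$ via $\inner{K_\mu(\cdot,x)}{K_\mu(\cdot,y)}_\mu^c=K_\mu(y,x)$ gives $\int_{\cD_\R}\int_{\cD_\R}f(x)\overline{h(y)}K_\mu(y,x)\,dx\,dy$, and by Lemma~\ref{lem:RestrictionOfKmu} and the symmetry $\kappa_\lambda(x,y)=\kappa_\lambda(y,x)$ on $\cD_\R$ (the kernel is real there, $K_\mu$ Hermitian) this equals $\inner{f}{h}_\lambda$. Hence $T_\mu f=0$ iff $f\in\cN$, and $T_\mu$ factors to an isometry $\cE_+/\cN\hookrightarrow\cH_\mu^c$, extending to an isometry $\wE\to\cH_\mu^c$. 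If $g\in\cH_\mu^c$ is orthogonal to the image then $0=\inner{g}{T_\mu f}_\mu^c=\int_{\cD_\R}g(x)\overline{f(x)}\,dx$ for all $f\in C_c^\infty(\cD_\R)$, so $g$ vanishes on $\cD_\R$; since $\cD_\R$ is the fixed-point set of the antiholomorphic involution $\tau$ of $\cD$, hence a maximal totally real submanifold of the connected domain $\cD$, the holomorphic function $g$ vanishes identically. Thus the image is dense, and being complete (as the isometric image of $\wE$) it is all of $\cH_\mu^c$, so $T_\mu\colon\wE\to\cH_\mu^c$ is a unitary isomorphism.

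\emph{Intertwining.} It suffices to prove $T_\mu\circ d\pi_\lambda^c(Z)=d\rho_\mu(Z)\circ T_\mu$ on $\cE_+/\cN$ for $Z$ in a complex-Lie-algebra generating set of $\fg_\C=\fg^c_\C$, since both sides are then $\C$-linear in $Z$ and respect brackets. I would use $\overline{\fn}_\C=\fp^c_-$ together with $\fh_\C$: from $\fp\cap\fh=\{X-\theta X:X\in\fg_1\}$ one gets $X=(X-\theta X)+\theta X\in\fh_\C+\overline{\fn}_\C$ for $X\in\fg_{1,\C}$, so $\fh_\C+\overline{\fn}_\C\supseteq\fg_{1,\C}+\fg_{-1,\C}$, hence contains $[\fg_{1,\C},\fg_{-1,\C}]=\fg_{0,\C}$ and generates $\fg_\C$. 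On $\overline{\fn}$ the check is direct: in the non-compact picture $d\pi_\lambda(W)$ acts by $-\partial_W$ ($\bN$ acts on $\overline{\fn}$ by translations, cocycle $\alpha$ trivial), and $d\rho_\mu(W)$ acts by $-\partial_W$ on holomorphic functions on $\cD\subseteq\fp^c_-$ ($\fp^c_-$ abelian, so $\exp(tW)$ translates $\cD$ and $J_\mu(\exp tW,\cdot)\equiv1$), for $W\in\overline{\fn}$; differentiating the integral for $T_\mu$ in its first slot and integrating by parts in the second, using $\partial_W^{(1)}K_\mu=-\partial_W^{(2)}K_\mu$ on $\cD_\R$ (the infinitesimal form of the translation invariance $K_\mu(z+W,w+W)=K_\mu(z,w)$), gives $T_\mu\circ\partial_W=\partial_W\circ T_\mu$. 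For $\fh$ it is enough that $T_\mu\circ\pi_\lambda(h)=\rho_\mu(h)\circ T_\mu$ for $h\in H\subseteq\wG^c$; using $\rho_\mu(h)K_\mu(\cdot,w)=\overline{J_\mu(h,w)^{-1}}K_\mu(\cdot,h\cdot w)$ (unitarity of $\rho_\mu$ plus the reproducing property) and $J_\mu(h^{-1},h\cdot z)J_\mu(h,z)=1$, this reduces to the pointwise identity $(\pi_\lambda(h^{-1})f)(y)=\overline{J_\mu(h,y)}\,\Delta_h(y)\,f(h\cdot y)$ on $\cD_\R$, with $\Delta_h$ the Jacobian of $y\mapsto h\cdot y$, which is checked as in the isometry step: by Lemma~\ref{lem:OpenDenseDecompositionsAgree}, $J_\mu(h,y)=\alpha(h\exp y)^{-(\lambda-\rho)}$ since $\mu=-\lambda+\rho$, the factor $\alpha(h\exp y)^{-2\rho}$ in $\Delta_h(y)$ comes from the transformation of Haar measure on $\bN$ (cf.~\eqref{eq:IntFormulaK}), and these combine with the normalization $f\mapsto f_\lambda$ to give back the cocycle $\alpha(h\exp y)^{-\lambda-\rho}$ of $\pi_\lambda$.

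\emph{Conclusion.} With both cases in hand, $T_\mu$ intertwines the full $\fg^c$-action on the dense, $\fg^c$-stable domain $T_\mu(\cE_+/\cN)$, which consists of smooth vectors for $\rho_\mu$ (differentiate the integral defining $T_\mu f$ under the integral sign). Transporting $(\rho_\mu,\cH_\mu^c)$ across $T_\mu$ therefore yields a unitary representation of $\wG^c$ on $\wE$ whose derived action on $\cE_+/\cN$ is $d\pi_\lambda^c$; this is the claimed integration, $T_\mu$ is by construction an equivalence of representations, and irreducibility is inherited from $(\rho_\mu,\cH_\mu^c)$. I expect the main obstacle to be the pointwise identity on $\cD_\R$ in the $\fh$-case, which must package the compatibility — under the single relation $\mu=-\lambda+\rho$ — of the cocycle $J_\mu|_H$, the Jacobian of the $H$-action on the totally real form $\cD_\R$, and the normalization twist $f\mapsto f_\lambda$; the isometry, the surjectivity via the totally real fixed-point set, and the Lie-algebraic generation step are by comparison routine.
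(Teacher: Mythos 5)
Your first two steps are sound, and in places sharper than the paper's own argument: for the isometry you work directly with the reproducing-kernel inner product $\inner{K_\mu(\cdot,w_1)}{K_\mu(\cdot,w_2)}_\mu^c=K_\mu(w_2,w_1)$ together with Lemmas~\ref{le:4.4} and \ref{lem:RestrictionOfKmu}, whereas the paper computes in the $L^2$-model of the holomorphic discrete series (so only for $\mu(iZ_0)\ll0$) and then invokes analytic continuation; and your surjectivity argument (a holomorphic function on $\cD$ vanishing on the totally real fixed-point set $\cD_\R$ of $\tau$ vanishes identically) makes explicit something the paper leaves implicit. You also handle the normalization $f\mapsto f_\lambda$ more carefully than the paper does.

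The intertwining step, however, contains a genuine error in the $\overline\fn$-case. The kernel $K_\mu$ is \emph{not} invariant under simultaneous translation of both arguments by $W\in\fp^c_-$: already in the rank-one case $(\fg,\fh)=(\sl(2,\R),\so(1,1))$ one has, in a suitable normalization, $K_\mu(z,w)=(1-z\overline{w})^{-\mu}$ and $K_\mu(z+t,w+t)\neq K_\mu(z,w)$, so the identity $\partial_W^{(1)}K_\mu=-\partial_W^{(2)}K_\mu$ on $\cD_\R$ that you invoke is false; consequently $T_\mu$ does \emph{not} commute with $\partial_W$. Indeed, with $\kappa_\lambda(x,y)=(1-xy)^{-\mu}$ on $\cD_\R=(-1,1)$ one gets $T_\mu(-\phi')(z)=\int\mu z(1-zx)^{-\mu-1}\phi(x)\,dx$ while $-\partial_zT_\mu\phi(z)=-\int\mu x(1-zx)^{-\mu-1}\phi(x)\,dx$, and these differ. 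What the same rank-one computation does verify is the $\tau$-twisted relation $T_\mu\circ d\pi_\lambda(Z)=d\rho_\mu(\tau Z)\circ T_\mu$ for $Z\in\fg$: on $\fh$ this is the relation you check (and that part of your argument is fine), but on the $\fq$-directions, and in particular on $\overline\fn$, where $\tau W\in\fn$ and $d\rho_\mu(\tau W)$ is a first-order operator with a nontrivial multiplier rather than a translation, the twist is essential. This is exactly what Proposition~\ref{prop 4.3} predicts: the Berezin form is $\pi_\lambda\otimes\pi_\lambda^\tau$-invariant, and $\kappa_\lambda(x,y)=\alpha(\tau(x)^{-1}y)^{\lambda-\rho}$ carries $\tau$ in one argument, so comparing two plain translation actions on $\overline\fn$ cannot work. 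Your reduction to the generators $\fh_\C+\overline\fn_\C$ is a reasonable strategy, but the relation you verify on $\overline\fn$ is not the one that holds; the repair is either to verify the twisted relation there (an integration by parts against $d\rho_\mu(\tau W)$, using the actual covariance of $\kappa_\lambda$), or to argue as the paper does, deducing the intertwining from the coincidence of the cocycles $J_\mu(g,x)=j_\lambda(g,x)$ furnished by Lemma~\ref{lem:OpenDenseDecompositionsAgree}, with the $\tau$ accounted for in how the $\fg^c$-action on $\wE$ is built from $\pi_\lambda$.
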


\begin{proof}
Using that $\kappa_\lambda(x,y)=\kappa_\lambda(y,x)=K_\mu (x,y)$ ($x,y\in\cD_\R$) and Lemma \ref{le:4.4} we get for $\mu(iZ_0)\ll0$:
\begin{align*}
 \ip{T_\mu f}{T_\mu h}_\mu^c &=\int_{\cD} T_\mu f(z)\overline{T_\mu h(z)}\,d\nu_\mu(z)\\
 &= \int_{\cD} \int_{\cD_\R }\int_{\cD_\R} K_\mu (z, x)f(x) \overline{K_\mu(z,y)}\overline{h(y)}\, dx\, dy\, d\nu_\mu (z)
\\
 &= \int_{\cD_\R}\int_{\cD_\R}f(x)\overline{h(y)}\int_{\cD}  K_\mu (z,x ) \overline{K_\mu(z,y)}\, d\nu_\mu(z)\, dx\, dy\\
 &= \int_{\cD_\R }\int_{\cD_\R }f(x )\overline{h(y)} \kappa_\lambda (x,y) \, dy\,dx = \ip{f}{h}_\lambda\, .
\end{align*} 
The interchanging of the integrals is allowed because $x$ and $y$ are contained in the compact subsets $\supp f$ and $\supp h$ of $\cD_\R$ and hence the reproducing kernels in the integral are bounded. The step from the third to the fourth equality uses the reproducing property of $K_\mu$. Although for the computation we assumed $\mu(iZ_0)\ll0$, the general case now follows by analytic continuation. The intertwining property follows from the fact that the $\overline N MAN$ decomposition in $G$ is just the restriction
of the $P^c_-K_\C^c P^c_+$ decomposition in $G^c_\C$ and hence $J_\mu(g,x)=j_\lambda(g,x)$ for $h\in H$ and $x\in\cD_\R\subseteq\cD$.
\end{proof}

\subsection{The Intertwining Operator into $L^2_\mu(\widetilde{G}^c/\widetilde{H})$}\label{se:7.4}

For $\mu\in\C$ we denote by $L^2_\mu(\widetilde{G}^c/\widetilde{H})$ the space of measurable functions $u:\widetilde{G}^c/\widetilde{H}\to\C$ such that $Z=Z(\widetilde{G}^c)\subseteq\widetilde{K}^c$ acts on $u$ by $\chi_\mu$ and $|u|\in L^2(\widetilde{G}^c/Z\widetilde{H})$. According to \cite{OO91} there exists a function $F: \cD\times \widetilde{G}^c/\widetilde{H}\to \C$, holomorphic in the first argument, such that the map
\[\Lambda_\mu f(g\widetilde{H})=\int_{\cD} f(z)\overline{F(z,g\widetilde{H})}\, d\nu_\mu (z)\]
defines an intertwining operator $\Lambda_\mu : \cH_\mu^c\to L^2_\mu (\widetilde{G}^c/\widetilde{H})$. Hence, $(\rho_\mu,\cH_\mu^c)$ occurs in $L^2_\mu(\widetilde{G}^c/\widetilde{H})$ as a discrete summand, and it is further shown that it occurs with multiplicity one. Furthermore, for a fixed $g\in\widetilde{G}^c$ the function $z\mapsto F(z,g\widetilde{H})$ is bounded and hence contained in $\cH_\mu$.

For $f\in C^\infty_c(\cD_\R)$ and $g\in\widetilde{G}^c$ define
\[S_\mu f(g\widetilde{H}) :=\int_{\cD_\R } f(x) \overline{F(x,g\widetilde{H})}\, dx \, .\]

\begin{Thm}
We have $S_\mu = \Lambda_\mu \circ T_\mu$. In particular $S_\mu$ extends to an isometric embedding $S_\mu : \wE\to \Im \Lambda_\mu \subset L^2_\mu (\widetilde G^c/\widetilde H)$.
\end{Thm}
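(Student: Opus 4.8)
The plan is to first verify the operator identity $S_\mu=\Lambda_\mu\circ T_\mu$ on the dense space $C_c^\infty(\cD_\R)$ by a direct computation, and then to read off the second assertion from the mapping properties of $T_\mu$ and $\Lambda_\mu$ that are already available. So I would fix $f\in C_c^\infty(\cD_\R)$ and $g\in\widetilde{G}^c$ and simply unfold the two definitions:
\[
(\Lambda_\mu\circ T_\mu)f(g\widetilde{H})=\int_\cD T_\mu f(z)\,\overline{F(z,g\widetilde{H})}\,d\nu_\mu(z)=\int_\cD\Big(\int_{\cD_\R}K_\mu(z,x)f(x)\,dx\Big)\overline{F(z,g\widetilde{H})}\,d\nu_\mu(z).
\]
Working in the range $\mu(iZ_0)\ll0$, where $\inner{\cdot}{\cdot}_\mu^c$ is the $L^2(\cD,d\nu_\mu)$-inner product, I would interchange the two integrals. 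This is justified by Fubini: $x$ ranges over the compact set $\supp f\subseteq\cD_\R$, the function $z\mapsto F(z,g\widetilde{H})$ is bounded by hypothesis, each $K_\mu(\cdot,x)$ lies in $\cH_\mu^c\subseteq L^2(\cD,d\nu_\mu)$, and the constant function $1=K_\mu(\cdot,0)$ also lies in $\cH_\mu^c\subseteq L^2(\cD,d\nu_\mu)$, so that $\int_\cD|K_\mu(z,x)|\,d\nu_\mu(z)\le\|K_\mu(\cdot,x)\|\,\|1\|<\infty$ uniformly for $x\in\supp f$ by Cauchy--Schwarz.

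After the interchange the inner integral is a reproducing-kernel evaluation. Since $z\mapsto F(z,g\widetilde{H})$ is bounded it lies in $\cH_\mu^c$, and
\[
\int_\cD K_\mu(z,x)\,\overline{F(z,g\widetilde{H})}\,d\nu_\mu(z)=\inner{K_\mu(\cdot,x)}{F(\cdot,g\widetilde{H})}_\mu^c=\overline{\inner{F(\cdot,g\widetilde{H})}{K_\mu(\cdot,x)}_\mu^c}=\overline{F(x,g\widetilde{H})},
\]
the last equality being the reproducing property of $K_\mu$. Hence $(\Lambda_\mu\circ T_\mu)f(g\widetilde{H})=\int_{\cD_\R}f(x)\,\overline{F(x,g\widetilde{H})}\,dx=S_\mu f(g\widetilde{H})$, which is the claimed identity for $\mu(iZ_0)\ll0$; since both sides depend holomorphically on $\mu$, it extends to all $\mu$ with $(\lambda-\rho)(X_0)\in\cW$ by analytic continuation, exactly as in the proof of the previous theorem.

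For the second assertion I would combine this with what is known. By the previous theorem $T_\mu$ annihilates $\cN$ and factors to a unitary isomorphism $\wE\xrightarrow{\ \sim\ }\cH_\mu^c$. By \cite{OO91}, $\Lambda_\mu$ is a nonzero bounded $\widetilde{G}^c$-intertwining operator from the irreducible representation $(\rho_\mu,\cH_\mu^c)$ into $L^2_\mu(\widetilde{G}^c/\widetilde{H})$, and $\rho_\mu$ occurs there with multiplicity one; hence $\Lambda_\mu^*\Lambda_\mu$ is a positive multiple of the identity by Schur's lemma, and after normalizing $F$ we may take $\Lambda_\mu$ to be an isometry onto the closed subspace $\Im\Lambda_\mu$. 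It follows that $S_\mu=\Lambda_\mu\circ T_\mu$ annihilates $\cN$ and, being the composite of a unitary isomorphism with an isometric embedding, extends to an isometric embedding $\wE\to\Im\Lambda_\mu\subseteq L^2_\mu(\widetilde{G}^c/\widetilde{H})$. I expect the only real technical points to be the Fubini step (resolved by working in the holomorphic discrete series range and noting $1=K_\mu(\cdot,0)\in\cH_\mu^c$, so $d\nu_\mu$ is finite) and the normalization turning the intertwiner of \cite{OO91} into a genuine isometry; all the representation-theoretic substance is already supplied by the preceding two theorems.
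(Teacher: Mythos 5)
Your argument is essentially the paper's own proof: unfold the two definitions, interchange the integrals using the compact support of $f$ (and the boundedness of $z\mapsto F(z,g\widetilde{H})$), and conclude by the reproducing property of $K_\mu$, with the second assertion read off from the unitarity of $T_\mu$ established in the preceding theorem and the intertwining/multiplicity-one properties of $\Lambda_\mu$ from \cite{OO91}. Your extra details (the Cauchy--Schwarz justification of Fubini via $1=K_\mu(\cdot,0)\in\cH_\mu^c$ and the Schur-lemma normalization making $\Lambda_\mu$ isometric onto its image) are harmless refinements; only the closing analytic-continuation remark is superfluous, since the identity is needed (and $\Lambda_\mu$ is defined) in the range where the computation is carried out.
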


\begin{proof}
The proof is a simple change of order of integrals, using that the integral over $\cD_\R$ is only over the compact set $\supp f$:
\begin{align*}
 (\Lambda_\mu\circ T_\mu)f(g\widetilde{H}) &= \int_{\cD} \int_{\cD_\R} f(x)K_\mu(z,x)\overline{F(z,g\widetilde{H})}\, dx\,d\nu_\mu(z)\\
 &= \int_{\cD_\R} f(x) \overline{\int_{\cD} F(z,g\widetilde{H})\overline{K_\mu(z,x)}\, d\nu_\mu(z)} dy \\
 &= \int_{\cD_\R } f(x) \overline{F(x,g\widetilde{H})}\, dx = S_\mu f(g\widetilde{H}) \,,
\end{align*} 
where we have used the reproducing property of $K_\mu$ in the last step.
\end{proof}

\section{The non-Riemannian open $H$-orbits}\label{se:nonR}

We show that on the non-Riemannian open $H$-orbits the Berezin form is only positive semidefinite for $\lambda=\rho$ which constructs the trivial representation of $G^c$ on $\wE=\C$. This is done via a rank two reduction, more precisely we first show that every pair $(\fg,\fh)$ contains either the pair $(\sl(3,\R),\so(1,2))$ or the pair $(\sp(2,\R),\gl(2,\R))$ in a certain way which allows us to use computations for these two particular examples.

\subsection{Rank two examples}

We discuss the rank two examples $(G,H)=(\SL(3,\R),\SO(1,2))$ and $(\Sp(2,\R),\GL(2,\R))$ in detail.

\begin{Ex}[$G/H=\SL(n+1,\R)/\SO (1,n)$]\label{ex:NonReflectionPositive}
Let $G=\SL(n+1,\R)$, $n\geq2$, with involution $\tau(g)=I_{1,n}(g^{-1})^t I_{1,n}$, then $H=\SO(1,n)$ and $G^c=\SU(1,n)$. We choose $X_0=\frac{1}{n+1}\diag(n,-1,\ldots,-1)\in\fg$ and identify $\overline{N}\simeq\R^n$ by
$$ \R^n\to\overline{N}, \quad x\mapsto\overline{n}_x=\left(\begin{array}{cc}1& 0\\x&I_n\end{array}\right). $$
Then on $\overline{N}$ the Berezin kernel is given by
$$ \kappa_\lambda(x, y) = \alpha(\tau(\overline{n}_{-x})\overline{n}_y)^{\lambda- \rho} = |1-\langle x,y\rangle|^{\lambda- \rho}, \qquad x,y\in\R^n, $$
where we identify $\fa_\C^*\simeq\C$ by $\lambda\mapsto\frac{n+1}{n}\lambda(X_0)$ so that $\rho=\frac{n+1}{2}$. There are two open $H$-orbits on $\cB\simeq\R{\rm P}^n$, and their intersections with the open dense Bruhat cell $\overline{N}\cdot b_0\subseteq\cB$ are given by
$$ \cO_0\cap\overline{N} = \{x\in\R^n\mid |x|<1\} = \cD_\R, \qquad \cO_1\cap\overline{N} = \{x\in\R^n\mid |x|>1\}. $$
The restriction of the Berezin kernel $\kappa_\lambda$ to $\{x\in\R^n\mid |x|<1\}$ is positive semidefinite if and only if $\lambda-\rho$ is contained in the Wallach set
$$ \cW = (-\infty,0)\cup\{0\} = (-\infty,0]. $$
We claim that the restriction to the other open orbit $\{x\in\R^n\mid |x|>1\}$ is positive semidefinite if and only if $\lambda= \rho$, i.e. $\kappa_\lambda\equiv1$. In fact, consider the distribution $f_\lambda=\delta_x-\delta_y$ on $\R^n$ with fixed $x,y\in\R^n$, $|x|,|y|>1$. Then $f_\lambda$ corresponds to a distribution $f$ on $\cB$ via the identification \eqref{eq:f-lambda} and by Lemma~\ref{le:4.4} we have
$$ \langle f,f\rangle_\lambda = (|x|^2-1)^{\lambda-\rho}+(|y|^2-1)^{\lambda- \rho}-2|1-\langle x,y\rangle|^{\lambda- \rho}. $$
If now $\lambda- \rho<0$ then for $|x|,|y|\gg1$ with $x\perp y$ we have $\langle f,f\rangle_\lambda<0$.
On the other hand, for $\lambda-\rho>0$ we have $\langle f,f\rangle_\lambda<0$ if $x\perp y$ and $|x|,|y|$ are
close to $1$. Approximating the distributions $\delta_x$ and $\delta_y$ by smooth bump functions, we obtain
that $\langle\cdot,\cdot\rangle_\lambda$ cannot be positive semidefinite if $\lambda+\rho\neq0$.
\end{Ex}

\begin{Ex}[$G/H=\Sp(n,\R)/\GL(n,\R)$]\label{ex:Non2}
Let $G=\Sp(n,\R)$, $n\geq2$, with involution $\tau(g)=I_{n,n}(g^{-1})^t I_{n,n}$, then $H\simeq\GL(n,\R)$ and $G^c\simeq\Sp(n,\R)$. Since $(G,H)$ is a Cayley type symmetric pair, $H$ is conjugate to $G_0=\{\diag(g,(g^{-1})^t)\mid g\in\GL(n,\R)\}\simeq\GL(n,\R)$, more precisely $g_0G_0g_0^{-1}=H$ with
$$ g_0 = g_0^{-1} = \frac{1}{\sqrt{2}}\left(\begin{array}{cc}I_n&I_n\\I_n&-I_n\end{array}\right) \in G. $$
We choose $X_0=\frac{1}{2}\diag(I_n,-I_n)$ and identify $\overline{N}\simeq\Sym(n,\R)$ by
$$ \Sym(n,\R)\to\overline{N}, \quad x\mapsto\overline{n}_x=\left(\begin{array}{cc}I_n& 0\\x&I_n\end{array}\right). $$
Then on $\overline{N}$ the Berezin kernel is given by
$$ \kappa_\lambda( x, y) = \alpha(\tau(\overline{n}_{-x})\overline{n}_y)^{\lambda- \rho} = \det(I_n-xy)^{\lambda- \rho}, \qquad x,y\in\Sym(n,\R), $$
where we identify $\fa_\C^*\simeq\C$ by $\lambda\mapsto\frac{2}{n}\lambda(X_0)$ so that $\rho=\frac{n+1}{2}$.
To describe the $H$-orbits in $G/\Pmax$ we first consider the $G_0$-orbits in $G/\Pmax$. They are all contained in the open dense Bruhat cell $\overline{N}\subseteq G/\Pmax$ and of the form
$$ \widetilde{\cO}_j = \{x\in\Sym(n,\R)\mid \sgn(x)=(j,n-j)\} \qquad (1\leq j\leq n), $$
where $\sgn(x)$ denotes the signature of the quadratic form on $\R^n$ corresponding to $x$. Then the open $H$-orbits are given by $\cO_j=g_0\widetilde{\cO}_j$. To find the intersection of $\cO_j$ with $\overline{N}\simeq\Sym(n,\R)$ we have to write elements of $\cO_j$ in the $\overline{N}G_0N$ decomposition, so we write
\begin{align*}
 g_0\left(\begin{array}{cc}I_n& 0\\x&I_n\end{array}\right) &= \frac{1}{\sqrt{2}}\left(\begin{array}{cc}I_n+x&I_n\\I_n-x&-I_n\end{array}\right)\\
 &= \left(\begin{array}{cc}I_n&0\\y&I_n\end{array}\right)\left(\begin{array}{cc}g& 0\\ 0&(g^{-1})^t\end{array}\right)\left(\begin{array}{cc}I_n&z\\ 0 &I_n\end{array}\right)\\
 &= \left(\begin{array}{cc}g&gz\\yg&ygz+(g^{-1})^t\end{array}\right),
\end{align*}
then $y=(I_n-x)(I_n+x)^{-1}$ and $x=(I_n-y)(I_n+y)^{-1}$. Hence,
$$ \cO_j\cap\overline{N} = \left\{y\in\Sym(n,\R)\, \left|\, \begin{array}{c}\det(I_n+y)\neq0,\\\sgn((I_n-y)(I_n+y)^{-1})=(j,n-j)\end{array}\right. \right\}. $$
Now let us specialize to the case $n=2$, then the orbits $\cO_0$ and $\cO_2$ are Riemannian and the orbit $\cO_1$ is non-Riemannian. We have $x\in\widetilde{\cO}_1$ if and only if $\sgn(x)=(1,1)$ which is equivalent to $\det(x)<0$. Hence, $y\in\cO_1\cap\overline{N}$ if and only if one of the two determinants $\det(I_n\pm y)$ is positive and the other one negative. Write
$$ y = \left(\begin{array}{cc}a&b\\b&c\end{array}\right), $$
then $\det(I_n\pm y)=1\pm(a+c)+ac-b^2$. Hence,
$$ \cO_1\cap\overline{N} = \left\{\left. \left(\begin{array}{cc}a&b\\b&c\end{array}\right)\, \right|\, -|a+c|<1+ac-b^2<|a+c|\right\}. $$
As in Example~\ref{ex:NonReflectionPositive} consider $f_\lambda=\delta_x-\delta_y$ with $x,y\in\cO_1\cap\overline{N}$, then
$$ \langle f,f\rangle_\lambda = |\det(I_n-x^2)|^{\lambda- \rho}+|\det(I_n-y^2)|^{\lambda- \rho} - 2|\det(I_n-xy)|^{\lambda- \rho}. $$
Choosing
$$ x=\left(\begin{array}{cc}s&0\\0&0\end{array}\right) \qquad \mbox{and} \qquad y=\left(\begin{array}{cc}0&0\\0&t\end{array}\right) $$
we have $x,y\in\cO_1\cap\overline{N}$ whenever $|s|,|t|>1$ and
\begin{equation*}
 |\det(I_n-x^2)| = (s^2-1), \qquad |\det(I_n-y^2)| = (t^2-1), \qquad |\det(I_n-xy)| = 1.
\end{equation*}
As in the first example, by choosing $s,t$ either close to $1$ or close to $\infty$ it follows that the Berezin form restricted to the open $H$-orbit $\cO_1$ cannot be positive semidefinite unless $\lambda+\rho=0$.
\end{Ex}

\subsection{Rank two reduction}

We now generalize the above examples to all $H$-orbits which are not Riemannian symmetric spaces. The idea is to reduce to one of the two examples by finding a subalgebra $\fg_i\subseteq\fg$ such that $(\fg_i,\fg_i\cap\fh)\simeq(\sl(3,\R),\so(2,1))$ or $(\sp(2,\R),\gl(2,\R))$. For this we first recall some structure theory.

Recall the strongly orthogonal roots $\alpha_1,\ldots,\alpha_{r}\in\Sigma_+$ from Section~\ref{sec:OpenHorbits} which we order such that $\alpha_{i+1}$ is the maximal root which is strongly orthogonal to $\alpha_1,\ldots,\alpha_i$. Denote by $\fa^{*}_+ $ the span of $\alpha_1,\ldots ,\alpha_r$ and by $\fa_-^*$ its orthogonal complement, then $\fa^*=\fa^*_+\oplus\fa^*_-$. Identifying $\fa\simeq\fa^*$ via the Killing form we also get a decomposition $\fa=\fa_+\oplus\fa_-$ with the properties that $\fa_-=\{H\in\fa\mid (\forall j=1,\ldots ,r)\, \alpha_j(H)= 0\}$ and $\fa_+$ is, via a Cayley transform, isomorphic to the maximal abelian subspace $\fa_\fh$ in $\fh\cap \fp$ of Lemma~\ref{lem:MaxAbelianInPcapH}. We can therefore identify $\alpha_i$ with its restriction to $\fa_+$. Recall also that $\fg_{0,\C}=\fk^c_\C$ to connect our statement with the original statement of Moore which we now recall, see \cite{M64} or \cite[Thm. 2.1]{S84}. (Note that the statement by Moore concerns a full Cartan subalgebra $\mathfrak{t}=\mathfrak{t}_0\oplus \fa$ in $\fg_0$. But the span of the $\alpha_j$ is the same if we use  $i\ft_0^*+\fa^*$ or $\fa^*$ and every root in $\Sigma$ is a restriction of a root in $\Sigma(\fg_\C,\ft_\C)$.)

\begin{Thm}[C. C. Moore]
Let the notation be as above. Then the following holds true:
\begin{enumerate}
\item The set of non-zero restrictions of elements of $\Sigma^+$ to $\fa_+$ is one of the following two sets:
\begin{itemize}
\item[(I)] $\{ \alpha_j, \frac{1}{2}(\alpha_i\pm \alpha_j)\mid 1\le i\le r, 1\le i < j\le r\}$,
\item[(II)] $\{\alpha_i, \frac{1}{2}\alpha_i, \frac{1}{2}(\alpha_i \pm \alpha_j)\mid 1\le i\le r, 1\le i < j\le r\}$.
\end{itemize}
\item The case (I) occurs if and only of $G^c/K^c$ is of tube type.
\item The restrictions of roots in $\Sigma_0^+=\{\alpha\in \Sigma^+\mid \alpha (X_0)=0\}=\Sigma^+ (\fa ,\fg_0)$ to $\fa_+$ are precisely those of the form $\frac{1}{2}(\alpha_i-\alpha_j)$ in case (I) and additionally $\frac{1}{2}\alpha_j$ in case (II). The restrictions of roots in $\Sigma_+$ are precisely those of the form $\frac{1}{2}(\alpha_i+\alpha_j)$ and $\alpha_j$ in case (I) and additionally $\frac{1}{2}\alpha_j$ in case (II).
\item Let $\beta\in\Sigma_0^+$. If $\beta|_{\fa_+}=0$, then $\beta $ is strongly orthogonal to all $\alpha_j$. If $\beta|_{\fa_+} = \frac{1}{2}\alpha_i$, then $\beta$ is strongly orthogonal
to all $\alpha_j$, $j\not= i$.
If $\beta|_{\fa_+} = \frac{1}{2}(\alpha_i-\alpha_j)$, ($i<j$), then $\beta$ is strongly orthogonal to all $\alpha_k$, $k\not=i,j$; moreover, $\beta +\alpha_j$ is not a root. 
\item The roots $\alpha_1, \ldots ,\alpha_{r}$ are all long roots. In case (II) only one root length occurs in $\Sigma$.
\item Unless $\fa_-= \{0\}$ the strongly orthogonal roots $\alpha_1,\ldots ,\alpha_r$ are the only restricted roots of multiplicity one.
\end{enumerate}
\end{Thm}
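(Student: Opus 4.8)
\emph{Plan of proof.} The statement is a translation, into the language of the $3$-grading of $\fg$, of C.~C.~Moore's classical analysis of the restricted roots of a Hermitian symmetric space. The plan is therefore to set up the dictionary between the two settings carefully and then to read off (1)--(6) from Moore's result together with a few elementary root-string arguments; the substantive input is \cite{M64} (see also \cite[Thm.~2.1]{S84}).

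First I would pass to the $c$-dual algebra $\fg^c=\fh\oplus i\fq$, which is Hermitian with complex structure $\ad Z_0$, $Z_0=iX_0$. A compact Cartan subalgebra of $\fg^c$ may be taken of the form $\ft^c=\ft_0\oplus i\amin$ with $\ft_0\subseteq\fk\cap\fh$ and $Z_0\in i\amin$, and its complexification coincides with that of the maximally split Cartan subalgebra $\ft=\ft_0\oplus\amin\subseteq\fg_0$ of $\fg$; thus $\fg$ and $\fg^c$ share the common root system $\Delta=\Delta(\fg_\C,\ft_\C)$, and --- as the remark preceding the theorem records --- every element of $\Sigma=\Sigma(\fg,\amin)$ is the restriction to $\amin$ of an element of $\Delta$. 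Since $\fg_{\pm1,\C}=\fp^c_\pm$ are the $\pm i$-eigenspaces of $\ad Z_0$, a root $\gamma\in\Delta$ is \emph{compact} for $\fg^c$ precisely when it restricts into $\Sigma_0\cup\{0\}$ and is a \emph{positive noncompact} root precisely when it restricts into $\Sigma_+$. Finally, $\alpha_1,\dots,\alpha_r$ correspond to a strongly orthogonal family of positive noncompact roots of $\fg^c$ in the sense of Harish-Chandra, and the Cayley transform $\Ad\big(\prod_{j=1}^{r}\exp(\tfrac{\pi}{4}(E_j-F_j))\big)$ carries $\fa_+=\Span_\R(H_1,\dots,H_r)$ isomorphically onto the maximal abelian subspace $\fa_\fh\subseteq\fp\cap\fh$ of Lemma~\ref{lem:MaxAbelianInPcapH}. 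In this way the restriction of $\Sigma$ (equivalently of $\Delta$) to $\fa_+$ is identified with the restriction map analysed by Moore.

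Granted this dictionary, (1) and (5) are Moore's explicit list of restrictions --- a system of type $C_r$ in case (I), of type $BC_r$ in case (II) --- together with the facts that the $\alpha_j$ are long roots and that in the $BC_r$ case the ambient system $\Sigma$ is reduced with a single root length; and (2) is the standard equivalence of case (I) with $G^c/K^c$ being of tube type. Statement (3) is Moore's compact/noncompact refinement of that list: via $c$-duality, the roots in $\Sigma_0$ are exactly those whose root space lies in $\fg_{0,\C}=\fk^c_\C$ (the compact roots) and the roots in $\Sigma_+$ those with root space in $\fg_{1,\C}=\fp^c_+$ (the positive noncompact roots), and Moore records which $\fa_+$-restriction each type produces. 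Statement (4) is then the strong-orthogonality and root-string bookkeeping in Moore's proof, deduced from (1): for the indicated indices $k$ a sum $\beta\pm\alpha_k$ would restrict to an element not in Moore's list --- for $\beta|_{\fa_+}=0$ one also uses that the $\alpha_k$ are among the longest roots of $\Sigma$ --- so no such sum is a root, and the remaining assertion about $\beta+\alpha_j$ follows by examining the $\alpha_j$-root-string through $\beta$. Finally (6) is Moore's observation that, once $\fa_-\neq\{0\}$, every restriction of the form $\tfrac12(\alpha_i\pm\alpha_j)$ or $\tfrac12\alpha_j$ is attained by a whole family of roots of $\Delta$ parametrised by the extra directions --- hence has multiplicity greater than one --- whereas $\alpha_j$ is attained by $\alpha_j$ alone.

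The main obstacle is the second paragraph: arranging the Cayley-transform identification so that it is uniform over all entries of Tables~\ref{tab:ClassificationCplx} and \ref{tab:ClassificationReal}, and in particular over the group case $\fg^c\simeq\fh\oplus\fh$. There $G^c/K^c$ is, up to conjugating the complex structure, two copies of $H/L$, and the $\alpha_j$ lift to $r$ noncompact roots of $\fg^c$ rather than to a maximal strongly orthogonal system of $2r$ of them; one has to check that restricting $\Delta$ to their span still yields the $C_r$ or $BC_r$ system of the correct rank $r$ (equivalently, that for $\fg$ with a complex structure the restricted root \emph{system} of $\fg$ coincides with that of the corresponding split real form). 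One must also keep the normalisations straight so that $\alpha_j(X_0)=1$ is preserved by the Cayley transform. Once these points are settled, (1)--(6) are read off from \cite{M64} and the root-string arguments above.
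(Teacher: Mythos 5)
Your proposal is correct and follows essentially the same route as the paper: the paper also does not reprove Moore's theorem but cites \cite{M64} and \cite[Thm.~2.1]{S84}, adding only the dictionary remark that $\fg_{0,\C}=\fk^c_\C$ and that every root in $\Sigma$ is the restriction of a root of $\fg_\C$ with respect to the full Cartan subalgebra $\ft=\ft_0\oplus\amin\subseteq\fg_0$, so that the span of the $\alpha_j$ is unchanged. Your write-up merely spells out this translation (compact roots $\leftrightarrow\Sigma_0$, noncompact $\leftrightarrow\Sigma_\pm$, Cayley transform onto $\fa_\fh$) in more detail than the paper does.
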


From these structural results it is easy to identify the open $H$-orbits in $\cB$ which are Riemannian symmetric spaces:

\begin{Cor}
The open $H$-orbit $\cO_i=H\cdot\tilde{w}_jb_0$ is a Riemannian symmetric space if and only if $\tilde{w}_i^2$ is contained in the center of $H$. This is precisely the case if either $i=0$, or if $i=r$ and $G^c/K^c$ is of tube type.
\end{Cor}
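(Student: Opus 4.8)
The statement contains two equivalences. I would first show, via Proposition~\ref{pro:5.5}, that $\cO_i\simeq H/H^{\sigma_i}$ (with $\sigma_i=\Ad(\tilde{w}_i^2)\circ\wtt$) is a Riemannian symmetric space if and only if $\tilde{w}_i^2\in Z(H)$, and then determine the corresponding $i$ by means of Moore's theorem. For the first equivalence, recall that $\tilde{w}_i^2\in Z_K(X_0)=L\subseteq K$, so $\tilde{w}_i^2$ is fixed by $\wtt$ and by $\theta$ and satisfies $\tilde{w}_i^4=1$. The orbit is Riemannian exactly when $H^{\sigma_i}$ is compact, i.e.\ when $\sigma_i|_\fh$ is a Cartan involution, i.e.\ (with $B$ the Killing form of $\fg$) when $-B(\cdot,\sigma_i\cdot)$ is positive definite on $\fh$. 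Now $P:=\Ad(\tilde{w}_i^2)|_\fh$ commutes with the Cartan involution $\wtt|_\fh$, squares to $\id$, and is orthogonal for the positive definite form $-B(\cdot,\wtt\cdot)$, hence is a self-adjoint involution; since $-B(\cdot,\sigma_i\cdot)=-B(P\cdot,\wtt\cdot)$, positivity holds iff $P$ has no eigenvalue $-1$, i.e.\ $P=\id$, i.e.\ $\Ad(\tilde{w}_i^2)$ acts trivially on $\fh$. Because $\tilde{w}_i^2$ centralizes $X_0$ it commutes with $\psi$, so $\Ad(\tilde{w}_i^2)$ preserves $\fh_\C=\psi(\fk_\C)$ with restriction there conjugate under $\psi$ to its restriction to $\fk_\C$; thus $P=\id$ is equivalent to $\Ad(\tilde{w}_i^2)|_\fk=\id$, i.e.\ to $\tilde{w}_i^2\in Z(K)$ ($K$ being connected), which in turn (as $\tilde{w}_i^2$ then centralizes $L\subseteq K$ and, by $P=\id$, also $(G^\tau)_0$) is equivalent to $\tilde{w}_i^2\in Z(H)$.

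For the second equivalence I would compute $\Ad(\tilde{w}_i^2)$ explicitly. Since the $\sl_2$-triples $(E_j,F_j,H_j)$ pairwise commute, $\tilde{w}_i=\exp\bigl(\tfrac{\pi}{2}\sum_{j\le i}(E_j-F_j)\bigr)$, so $\tilde{w}_i^2=\prod_{j\le i}\varphi_j(-\1)$ with $-\1$ the central element of $\SL(2,\C)$. As $-\1$ acts on every $\SL(2,\C)$-module through its central character, $\Ad(\varphi_j(-\1))$ multiplies the $\ad H_j$-weight-$k$ subspace of $\fg_\C$ by $(-1)^k$; in particular it is trivial on the joint zero weight space of all the $\ad H_j$ (which contains $\fz_\fk(\amin)$) and multiplies each root space $\fg_\gamma$ ($\gamma\in\Sigma$) by $(-1)^{\gamma(H_j)}$. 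Using the decompositions $\fp\cap\fh\simeq\fn=\bigoplus_{\gamma\in\Sigma_+}\fg_\gamma$ (via $X\mapsto X-\theta X$) and $\fk\cap\fh=\fz_\fk(\amin)\oplus\bigoplus_{\gamma\in\Sigma_0^+}\{X+\theta X:X\in\fg_\gamma\}$, and that $\gamma$ and $-\gamma$ contribute the same sign, this gives: $\Ad(\tilde{w}_i^2)|_\fh=\id$ if and only if $\sum_{j\le i}\gamma(H_j)$ is even for every $\gamma\in\Sigma^+$.

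Finally I would feed this into Moore's theorem. Since $H_j\in\fa_+$, the value $\gamma(H_j)$ depends only on $\gamma|_{\fa_+}$, and $\langle\alpha_k,\alpha_j^\vee\rangle=2\delta_{jk}$. By its first and third parts, the possible nonzero restrictions to $\fa_+$ of a root $\gamma\in\Sigma^+$ are $\alpha_k$ and $\tfrac{1}{2}(\alpha_k\pm\alpha_l)$ ($k<l$), together with $\tfrac{1}{2}\alpha_k$ in the non-tube case; the parity of $\sum_{j\le i}\gamma(H_j)$ is then even in the $\alpha_k$-case, odd exactly when $k\le i<l$ in the $\tfrac{1}{2}(\alpha_k\pm\alpha_l)$-cases, and odd exactly when $k\le i$ in the $\tfrac{1}{2}\alpha_k$-case. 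Hence some such sum is odd if and only if either $G^c/K^c$ is not of tube type and $i\ge1$ (take $\gamma|_{\fa_+}=\tfrac{1}{2}\alpha_1$), or $G^c/K^c$ is of tube type and $0<i<r$ (take $\gamma|_{\fa_+}=\tfrac{1}{2}(\alpha_i+\alpha_{i+1})$, which by the third part is a restriction of some $\gamma\in\Sigma_+$). Combined with the first equivalence this shows that $\cO_i$ is a Riemannian symmetric space precisely for $i=0$, or for $i=r$ with $G^c/K^c$ of tube type, which is the assertion.

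The delicate point is the bookkeeping in the last two paragraphs, and especially the verification that in the tube case with $i=r$ the operator $\Ad(\tilde{w}_r^2)$ is trivial on \emph{all} of $\fh$ rather than only on $\fn$: this forces one to keep track of the $\Sigma_0$-root spaces sitting inside $\fk\cap\fh$ (whose $\fa_+$-restrictions are $0$ or $\tfrac{1}{2}(\alpha_k-\alpha_l)$, all of which give even sums when $i=r$) and of the $\amin$-centralizer directions. The reductions in the first two paragraphs are arranged so as to turn this into a single uniform parity count, but one should still check the borderline situations where $\fa_-\neq\{0\}$ or where $K$ is not simply connected, which by those reductions come down to the same computation on $\fk$.
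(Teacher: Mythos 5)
The paper offers no written proof of this corollary (it is asserted to follow ``from these structural results'', i.e.\ from Moore's theorem together with Proposition~\ref{pro:5.5}), and your argument is a correct filling-in of exactly that intended route: identify $\cO_i\simeq H/H^{\sigma_i}$, reduce the question to whether $P=\Ad(\tilde{w}_i^2)$ is trivial on $\fh$, and decide this by the parity of $\sum_{j\le i}\gamma(H_j)$ over the restrictions to $\fa_+$ listed in Moore's theorem. The bookkeeping in your last two paragraphs is correct, including the tube case $i=r$, where the restrictions $0$, $\alpha_k$ and $\tfrac{1}{2}(\alpha_k\pm\alpha_l)$ all give even sums, and the reduction $P=\id$ on $\fh$ $\Leftrightarrow$ $\tilde{w}_i^2\in Z(K)$ $\Leftrightarrow$ $\tilde{w}_i^2\in Z(H)$ via $\psi$ is fine.

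One step is stated more glibly than it deserves: ``$H^{\sigma_i}$ is compact, i.e.\ $\sigma_i|_\fh$ is a Cartan involution''. Compactness of $H^{\sigma_i}$ only forces $\fh^{\sigma_i}\cap\fp=0$, i.e.\ $P=\id$ on $\fp\cap\fh$; it does not by itself exclude a $(-1)$-eigenspace of $P$ inside $\fk\cap\fh$, so the implication ``compact $\Rightarrow$ Cartan'' needs a word. Two quick repairs, both essentially contained in what you wrote: (i) whenever some parity is odd, the offending root can be chosen in $\Sigma_+$ (for $0<i<r$ take $\gamma\in\Sigma_+$ restricting to $\tfrac{1}{2}(\alpha_i+\alpha_{i+1})$, and in the non-tube case, for any $1\le i\le r$, take $\gamma\in\Sigma_+$ restricting to $\tfrac{1}{2}\alpha_1$; both exist by parts (1) and (3) of Moore's theorem). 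Then the flipped vectors $X-\theta X$ lie in $\fp\cap\fh$, so $\fh^{\sigma_i}$ contains hyperbolic elements and $H^{\sigma_i}$ is noncompact, which is all that the ``only if'' direction requires. Alternatively, (ii) note that for the pairs at hand $\fh$ has no compact ideals and its center lies in $\fp\cap\fh$, so $\fk\cap\fh=[\fp\cap\fh,\fp\cap\fh]$ and $P=\id$ on $\fp\cap\fh$ already forces $P=\id$ on all of $\fh$. With either patch your proof is complete and matches the argument the paper leaves implicit.
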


Now consider a non-Riemannian open $H$-orbit $\cO_i$ in $\cB$ ($1\leq i\leq r$). The following lemma constructs a subalgebra $\fg_i\subseteq\fg$ such that $(\fg_i,\fg_i\cap\fh)$ is either isomorphic to $(\sl(3,\R),\so(1,2))$ or $(\sp(2,\R),\gl(2,\R))$, and such that the (unique) non-Riemannian open $(H\cap G_i)$-orbit in $G_i/(\Pmax\cap G_i)$ embeds into the non-Riemannian open $H$-orbit $\cO_i$ in $G/\Pmax$, where $G_i$ is the analytic subgroup of $G$ with Lie algebra $\fg_i$.

\begin{Lemma}\label{lem:SubalgebrasSL3Sp2}
\begin{enumerate}
\item Let $1\leq i<r$, then there exists a $\tau$-stable subalgebra $\fg_i\subseteq\fg$ such that either $(\fg_i,\fg_i\cap\fh)\simeq(\sp(2,\R),\gl(2,\R))$ or $(\fg_i,\fg_i\cap\fh)\simeq(\sl(3,\R),\so(1,2))$. Moreover, $\fg_i$ commutes with $s_j'$ for all $1\leq j<i$, and $s_i'$ acts on $\fg_i$ as in Example~\ref{ex:NonReflectionPositive} or \ref{ex:Non2}.
\item Let $1\leq i\leq r$ and assume that $\fg^c$ is not of tube type, then there exists a $\tau$-stable subalgebra $\fg_i\subseteq\fg$ such that $(\fg_i,\fg_i\cap\fh)\simeq(\sl(3,\R),\so(1,2))$ and $\fg_i$ commutes with $s_j'$ for all $1\leq j<i$, and $s_i'$ acts on $\fg_i$ as in Example~\ref{ex:NonReflectionPositive}.
\end{enumerate}
\end{Lemma}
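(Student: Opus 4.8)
The plan is to build the subalgebra $\fg_i$ explicitly from the root vectors attached to $\alpha_i$ and one other strongly orthogonal root, using Moore's structure theorem to control the relevant root strings. Fix $i$ with $1\le i\le r$ (resp. $i<r$ in part (1)). The natural candidate for a $\tau$-stable rank-two subalgebra is the one generated by the $\sl(2,\R)$-triples $(E_i,F_i,H_i)$ and $(E_j,F_j,H_j)$ for a suitable $j$, together with the root vectors in the intermediate string $\frac{1}{2}(\alpha_i\pm\alpha_j)$ (and, in the non-tube case, in $\frac{1}{2}\alpha_i$). I would choose $j=i+1$ in part (1), so that by our ordering $\alpha_j$ is the largest root strongly orthogonal to $\alpha_1,\dots,\alpha_i$, and in part (2) I would instead use the short root $\beta$ with $\beta|_{\fa_+}=\frac12\alpha_i$, whose existence is guaranteed by Moore (II). In the first situation the restricted root system of $\fg_i$ on $\R\alpha_i\oplus\R\alpha_j$ is of type $C_2$ (case I, tube type) or $A_2$ (case II handled via $j=i+1$); in the second it is of type $BC_1$ coming from $\{\alpha_i,\frac12\alpha_i\}$, which forces $(\fg_i,\fg_i\cap\fh)\simeq(\sl(3,\R),\so(1,2))$.

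Concretely, I would proceed in four steps. \emph{Step 1:} Let $\fg_i$ be the subalgebra of $\fg$ generated by all root spaces $\fg_\gamma$ with $\gamma|_{\fa_+}\in\Z\alpha_i+\Z\alpha_j$ (resp. $\in\frac12\Z\alpha_i$ in part (2)), together with the corresponding coroots in $\fa$. Since $\tau|_{\fg_0}=\id$ and $\tau$ swaps $\fg_{\pm1}$, and since $\tau(E_k)=F_k$, the span is $\tau$-stable; it is also $\theta$-stable because $\theta(\fg_\gamma)=\fg_{-\gamma}$. \emph{Step 2:} Use Moore's theorem, parts (1), (3), (4), to show that this closes up into a Lie algebra with the claimed restricted root system: the strong orthogonality statements in (4) ("$\beta+\alpha_j$ is not a root", $\beta$ strongly orthogonal to the other $\alpha_k$) are exactly what kill all the unwanted brackets, so no roots outside $\Z\alpha_i+\Z\alpha_j$ appear. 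Checking multiplicities — here part (6) of Moore (the $\alpha_k$ have multiplicity one) together with the dimension of the $\frac12(\alpha_i\pm\alpha_j)$-spaces — pins down $\fg_i$ up to isomorphism as $\sp(2,\R)$ (type $C_2$, all relevant multiplicities $1$) or $\sl(3,\R)$ (type $A_2$, all multiplicities $1$), and the non-tube case gives $BC_1$ with the short-root multiplicity forcing $\sl(3,\R)$. \emph{Step 3:} Identify $\fg_i\cap\fh$: since $\tau=\wtt\theta$ and $\tau(E_k)=F_k$, the fixed algebra of $\tau$ inside $\fg_i$ is spanned by $E_k+F_k$ and by the appropriate symmetric combinations in the intermediate root spaces; matching this against the split rank-two picture shows it is $\gl(2,\R)\subseteq\sp(2,\R)$ (Cayley type) or $\so(1,2)\subseteq\sl(3,\R)$, exactly as in Examples~\ref{ex:NonReflectionPositive} and~\ref{ex:Non2}. \emph{Step 4:} Commutation with $s_j'=\exp(\tfrac\pi2(E_j-F_j))$ for $j<i$: since $\alpha_1,\dots,\alpha_i$ are mutually strongly orthogonal, for $j<i$ the element $E_j-F_j$ commutes with $E_i,F_i$ and (by Moore (4)) with every root vector in the strings $\frac12(\alpha_i\pm\alpha_j)$ and $\frac12\alpha_i$ involved in $\fg_i$; hence $\Ad(s_j')$ is the identity on $\fg_i$. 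That $s_i'$ acts on $\fg_i$ as in the examples is immediate once we know $s_i'=\exp(\tfrac\pi2(E_i-F_i))$ sits inside $G_i$ and normalizes $\fa\cap\fg_i$ by the reflection $s_{\alpha_i}$.

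\emph{The main obstacle.} The delicate point is Step 2 together with the first half of Step 3: verifying that the abstractly-generated $\fg_i$ really is isomorphic to $\sl(3,\R)$ or $\sp(2,\R)$ \emph{as a real form} with the correct real structure, and that the induced involution $\tau|_{\fg_i}$ corresponds precisely to conjugation by $I_{1,n}$-type or $I_{n,n}$-type matrices. The restricted-root combinatorics from Moore's theorem pin down $\fg_i$ only up to the ambiguity of which real form of the complexified rank-two algebra one has; one must argue that $\fg_i$ is split (its restricted root system, built from roots of $\fg$ which are themselves restricted roots, has all the required multiplicities, forcing $\fg_i$ split, equivalently $\fg_i=\fk_i\oplus\fp_i$ with $\fp_i$ containing a two-dimensional maximal abelian subspace and all root spaces one-dimensional). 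Once splitness is established, the $\tau$-stable structure plus the explicit triples $(E_i,F_i,H_i)$ determine $\fg_i\cap\fh$ uniquely, and one reads off the isomorphism type of the pair by comparing with the two tabulated rank-two cases. I expect the bookkeeping of root multiplicities in case (II) (where the short roots $\frac12\alpha_i$ and $\frac12(\alpha_i-\alpha_j)$ coexist) to be the fiddliest part, but it is entirely governed by Moore's list.
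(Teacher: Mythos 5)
Your overall strategy (rank-two reduction built from $\alpha_i$ together with a root restricting to $\tfrac12(\alpha_i+\alpha_{i+1})$, resp.\ $\tfrac12\alpha_i$ in the non-tube case, with Moore's theorem controlling root strings and strong orthogonality giving the commutation with the $s_j'$) is the same as the paper's, but the way you actually build $\fg_i$ has a genuine gap. You generate $\fg_i$ from \emph{all} root spaces $\fg_\gamma$ whose restriction to $\fa_+$ lies in the relevant lattice and then claim in Step~2 that ``all relevant multiplicities are $1$'', so that $\fg_i$ comes out split of type $A_2$ or $C_2$. That multiplicity claim is false in general: Moore's theorem, part~(6), says that (unless $\fa_-=\{0\}$) the $\alpha_k$ are the \emph{only} restricted roots of multiplicity one, so the root spaces over the restrictions $\tfrac12(\alpha_i\pm\alpha_{i+1})$ and $\tfrac12\alpha_i$ typically have dimension greater than one (already for $\fg=\su(n,n)$ or $\so^*(4n)$). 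Consequently the subalgebra generated by the full root spaces is in general much larger than $\sl(3,\R)$ or $\sp(2,\R)$ --- brackets $[\fg_{\beta_1},\fg_{-\beta_2}]$ for distinct roots $\beta_1\neq\beta_2$ with the same restriction to $\fa_+$ also drag in pieces of $\fg_0$ --- it need not be split, and the pair $(\fg_i,\fg_i\cap\fh)$ need not be one of the two tabulated rank-two pairs; so the reduction to Examples~\ref{ex:NonReflectionPositive} and \ref{ex:Non2} breaks down exactly at the point you yourself flagged as the main obstacle. The paper avoids this by fixing a \emph{single} root $\beta\in\Sigma_+$ with $\beta|_{\fa_+}=\tfrac12(\alpha_i+\alpha_{i+1})$ (resp.\ $\tfrac12\alpha_r$), choosing single root vectors $X_\beta\in\fg_\beta$ and $X_\gamma\in\fg_{\alpha-\beta}$, setting $X_\alpha=[X_\beta,X_\gamma]$ and closing up under $\theta$; this produces an $8$- or $10$-dimensional subalgebra which is $\sl(3,\R)$ or $\sp(2,\R)$ by construction, and its $\tau$-stability follows from $\tau=-\theta$ on $\fn\oplus\overline\fn$ and $\tau=\theta$ on $\fg_0$.

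A second, related error is your dichotomy ``case I (tube type) gives $C_2$, hence $\sp(2,\R)$; case II gives $A_2$, hence $\sl(3,\R)$''. Which of the two pairs occurs is decided not by tube versus non-tube type but by the $\beta$-string through $\alpha_i$: if $\alpha_i-2\beta$ is a root (so $|\alpha_i|^2=2|\beta|^2$) one gets $\sp(2,\R)$, and if not (so $|\beta|=|\alpha_i|$) one gets $\sl(3,\R)$. For instance $\fg=\sl(2n,\R)$ has $\fg^c=\su(n,n)$ of tube type, yet the rank-two reduction there produces $\sl(3,\R)$; also, in case II the restrictions to the span of $\alpha_i,\alpha_{i+1}$ form a system of type $BC_2$, not $A_2$. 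Your Steps~3 and 4 (identification of $\fg_i\cap\fh$ using $\tau(E_k)=F_k$, and commutation with $s_j'$ for $j<i$ via strong orthogonality together with $s_j'\in\exp(\fg_{\alpha_j}+\fg_{-\alpha_j})$) are fine and agree with the paper, but only once $\fg_i$ has been constructed correctly as above.
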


\begin{proof}
We first assume $1\leq i<r$ and put $\alpha=\alpha_i$. Let $\beta\in\Sigma_+$ be a root whose restriction to $\fa_+$ is equal to $\frac{1}{2}(\alpha_i+\alpha_{i+1})$ and consider the root string $\alpha-n\beta$. By restricting to $\fa_+$ it is clear that $\alpha-n\beta$ is a root for $n=0$, but not for $n=-1$ and $n=3$. Let $n$ be maximal such that $\alpha-n\beta$ is a root, then $\frac{n}{2}|\beta|^2=\langle\alpha,\beta\rangle=\frac{1}{2}|\alpha|^2>0$ so that $n=1$ or $n=2$. In particular $\gamma=\alpha-\beta$ is a root. We treat the two cases $n=1$ and $n=2$ separately:
\begin{enumerate}
\item If $n=1$ then $\alpha-2\beta$ is not a root and $|\beta|=|\alpha|$. Consider the root string $\beta-m(\beta-\alpha)$, then this is a root for $m=0$ and $m=1$, but not for $m=-1$. Hence, $\frac{m}{2}|\beta-\alpha|^2=\langle\beta,\beta-\alpha\rangle=|\beta|^2-\langle\alpha,\beta\rangle=\frac{1}{2}|\alpha|^2$. Since $|\beta-\alpha|^2=|\beta|^2-2\langle\alpha,\beta\rangle+|\alpha|^2=|\alpha|^2$ it follows that $m=1$ so that $2\alpha-\beta=\beta-2(\beta-\alpha)$ is not a root. Hence, the roots of the rank two subalgebra generated by $\fg_{\pm\alpha}$ and $\fg_{\pm\beta}$ are $\pm\alpha$, $\pm\beta$ and $\pm\gamma$. Choose non-trivial elements $X_\beta\in\fg_\beta$, $X_\gamma\in\fg_\gamma$ and put $X_\alpha=[X_\beta,X_\gamma]\in\fg_\alpha$, $X_{-\alpha}=\theta X_\alpha$, $X_{-\beta}=\theta X_\beta$, $X_{-\gamma}=\theta X_\gamma$, then $X_{\pm\alpha}$, $X_{\pm\beta}$ and $X_{\pm\gamma}$ generate an $8$-dimensional subalgebra $\fg_i$ of $\fg$ isomorphic to $\sl(3,\R)$. Since $\tau=-\theta$ on $\fn\oplus\overline{\fn}$ and $\tau=\theta$ on $\fg_0$ it follows that $\fg_i$ is $\tau$-stable and $\fg_i\cap\fh=\fg_i^\tau\simeq\so(1,2)$.
\item If $n=2$ then $\alpha-2\beta$ is a root and $|\alpha|^2=2|\beta|^2$. Since $|\frac{1}{2}(\alpha_i+\alpha_{i+1})|^2=\frac{1}{2}|\alpha|^2$ it is clear that $\beta|_{\fa_-}=0$ so that $\beta=\frac{1}{2}(\alpha_i+\alpha_{i+1})$. This implies that $\beta$, $\gamma$ and $\beta\pm\gamma$ are the only positive roots of the rank two subalgebra generated by $\fg_{\pm\alpha}$ and $\fg_{\pm\beta}$. As above one constructs a $10$-dimensional $\tau$-stable subalgebra $\fg_i$ of $\fg$ such that $(\fg_i,\fg_i\cap\fh)\simeq(\sp(2,\R),\gl(2,\R))$.
\end{enumerate}
It remains to show that the constructed subalgebras $\fg_i$ commute with $s_j'$ for $1\leq j<i$, and that $s_i'$ acts in the given way. The first statement follows from Moore's Theorem: all roots constructed above are strongly orthogonal to $\alpha_j$ for $1\leq j<i$ and $s_j'\in\exp(\fg_{\alpha_j}+\fg_{-\alpha_j})$. Moreover, $s_i'=\exp(\frac{\pi}{2}(E_i-F_i))$ with $E_i\in\fg_{\alpha_i}$ and $F_i\in\fg_{-\alpha_i}$, and the statement now follows from the explicit isomorphism $(\fg_i,\fg_i\cap\fh)\simeq(\sp(2,\R),\gl(2,\R))$ resp. $(\fg_i,\fg_i\cap\fh)\simeq(\sl(3,\R),\so(1,2))$.

To show the second statement we may assume $i=r$ and $\fg^c$ not of tube type. Then for $\alpha=\alpha_r$ we can choose a root $\beta\in\Sigma_+$ whose restriction to $\fa_+$ is equal to $\frac{1}{2}\alpha_r$. Similar arguments as above show that the the roots $\alpha$, $\beta$ and $\gamma=\alpha-\beta$ construct a subalgebra $\fg_i$ isomorphic to $\sl(3,\R)$. The rest of the proof is analogous to the first part.
\end{proof}

Combining Lemma~\ref{lem:SubalgebrasSL3Sp2} with Example~\ref{ex:NonReflectionPositive} and \ref{ex:Non2} now shows:

\begin{Thm}\label{thm:8.6}
Let $\mathcal{O}_i$ ($1\leq i\leq r$) be a non-Riemannian open $H$-orbit in $G/\Pmax$. Then the Berezin form $\langle\cdot,\cdot\rangle_\lambda$ restricted to $C_c^\infty(\mathcal{O}_i)$ is positive semidefinite if and only if $\lambda-\rho=0$. In this case the construction in Section~\ref{sec:ReflectionPositivity} yields the trivial representation of $G^c$ on the one-dimensional Hilbert space $\wE=\C$.
\end{Thm}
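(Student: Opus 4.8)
The plan is to deduce both directions from the rank-two reduction of Lemma~\ref{lem:SubalgebrasSL3Sp2} together with the explicit computations in Examples~\ref{ex:NonReflectionPositive} and \ref{ex:Non2}. Fix a non-Riemannian open orbit $\cO_i$ ($1\le i\le r$). If $i<r$, then Lemma~\ref{lem:SubalgebrasSL3Sp2}(1) applies; if $i=r$, then as noted above $\cO_r$ can only be non-Riemannian when $\fg^c$ is not of tube type, so Lemma~\ref{lem:SubalgebrasSL3Sp2}(2) applies. In either case we obtain a $\tau$-stable subalgebra $\fg_i\subseteq\fg$, compatible with the $3$-grading, with $(\fg_i,\fg_i\cap\fh)$ isomorphic to $(\sl(3,\R),\so(1,2))$ or $(\sp(2,\R),\gl(2,\R))$, and with $s_i'$ acting on $\fg_i$ as the reflection of Example~\ref{ex:NonReflectionPositive} resp.\ \ref{ex:Non2}. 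Let $G_i\subseteq G$ be the analytic subgroup with Lie algebra $\fg_i$.

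The key step is to identify the restriction of the Berezin form to the corresponding sub-orbit. Since $\fg_i$ is a $\tau$-stable graded subalgebra, one has $\overline N\cap G_i=\overline N_i:=\exp(\overline\fn\cap\fg_i)$, $N\cap G_i=N_i$, and $G_0\cap G_i$ is the degree-zero subgroup of $G_i$, so by uniqueness the $\overline N G_0 N$-decomposition of $G$ restricts to the analogous decomposition of $G_i$. Moreover $\fg_i$, being simple, is Killing-orthogonal to its centralizer in $\fg$; hence the $A$-valued projection $\alpha$ of $G$, restricted to $G_i$, agrees with the $A_i$-valued projection of $G_i$ up to the fixed positive constant relating the normalizations of the grading elements $X_0$ and $X_0^i$. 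Consequently, identifying $\overline\fn\cap\fg_i\simeq\overline N_i\cdot b_0\subseteq\overline N\cdot b_0$, the restriction of the Berezin kernel $\kappa_\lambda$ to $\overline N_i\times\overline N_i$ equals the Berezin kernel computed in Example~\ref{ex:NonReflectionPositive} resp.\ \ref{ex:Non2}, with $\lambda-\rho$ replaced by a fixed positive multiple of itself. Finally, by the construction in Lemma~\ref{lem:SubalgebrasSL3Sp2} the unique non-Riemannian open $(H\cap G_i)$-orbit in $G_i/(\Pmax\cap G_i)$ sits inside $\cO_i$ and, in the coordinates on $\overline N_i$, occupies exactly the region where Examples~\ref{ex:NonReflectionPositive} and \ref{ex:Non2} exhibit the indefiniteness (the exterior $\{|x|>1\}$, resp.\ its $\Sp(2,\R)$-analogue).

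Granting this, the theorem follows. If $\langle\cdot,\cdot\rangle_\lambda$ is positive semidefinite on $C_c^\infty(\cO_i)$, it is in particular positive semidefinite on the functions supported in the above region of $\overline N_i$; by Lemma~\ref{le:4.4} and the previous paragraph this restricted form is, up to the strictly positive density factors $a(\cdot)^{-\lambda-\rho}$ (which do not affect positivity), the Berezin form of $G_i$ on its non-Riemannian orbit, and approximating the test distributions $\delta_x-\delta_y$ of Examples~\ref{ex:NonReflectionPositive} and \ref{ex:Non2} by bump functions forces $\lambda-\rho=0$. Conversely, at $\lambda=\rho$ the Berezin kernel is $\beta_\rho\equiv1$, so $B(\rho)f=\big(\int_\cB f\big)\mathbf{1}$ and $\langle f,h\rangle_\rho=\big(\int_\cB f\big)\overline{\big(\int_\cB h\big)}$ is positive semidefinite of rank one; hence $\cN=\{f\in C_c^\infty(\cO_i):\int_\cB f=0\}$ has codimension one and $\wE\cong\C$, and since \eqref{eq:IntFormulaK} shows that $f\mapsto\int_\cB f$ is $\pi_\rho(G)$-invariant, the induced $\fg^c$-action on $\wE$ is zero, i.e.\ the construction of Section~\ref{sec:ReflectionPositivity} yields the trivial representation of $G^c$ on $\wE=\C$. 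I expect the only genuine work to be the reduction identity of the second paragraph --- matching the restricted triangular decomposition and Berezin kernel with those of $G_i$ and locating the non-Riemannian sub-orbit in $\overline N_i$ --- which is essentially the content of Lemma~\ref{lem:SubalgebrasSL3Sp2}; all the analytic input is already contained in the two worked examples.
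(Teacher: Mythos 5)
Your proposal is correct and follows essentially the same route as the paper: the paper proves Theorem~\ref{thm:8.6} precisely by combining the rank-two reduction of Lemma~\ref{lem:SubalgebrasSL3Sp2} with the indefiniteness computations of Examples~\ref{ex:NonReflectionPositive} and~\ref{ex:Non2}, with the case $\lambda=\rho$ handled by observing $\kappa_\rho\equiv1$. Your write-up merely makes explicit some steps the paper leaves implicit (restriction of the $\overline{N}G_0N$-decomposition and of the $A$-projection to $G_i$, the positive rescaling of the parameter, and the identification of $\wE\cong\C$ with the trivial action via the $\pi_\rho(G)$-invariance of $f\mapsto\int_\cB f$), all consistent with the paper's argument.
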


\section{The Hardy--Littlewood--Sobolev inequality}\label{sec:HLS}
In this final section we give a short overview of the application of reflection
positivity to the Hardy--Littlewood--Sobolev inequality, a very basic result in analysis on
Euclidian space and on the sphere. Several proofs have been given, often
involving rearrangement inequalities; and a crucial part of the HLS
inequality was the optimal constant found in 1983 by E. Lieb~\cite{L83}. In a recent paper by R. Frank and E. Lieb~\cite{FL10} one finds a
new proof of certain cases of the sharp HLS inequality, using in an essential
way reflection positivity of inversions in hyperplanes and spheres (see also \cite{FL11}). It is
a remarkable aspect of reflection positivity, whose origin was completely
different, and with very natural interpretations in representation theory, that
it also may lead to HLS. We shall here briefly indicate how the argument goes,
and of course one may speculate about similar applications of the many
generalizations of reflection positivity that we have discussed in this paper.

Consider the Hermitian form
$$I_{\lambda}[f,h] = \int_{\mathbb R^n}\int_{\mathbb R^n}
\frac{f(x)\overline{h(y)}}{|x-y|^{\lambda}}
\,dx\,dy$$
and recall the HLS inequality relating this with the $L^p$ norm
$$|I_{\lambda}[f,h]| \leq C_{n, \lambda, p} ||f||_p ||h||_p$$
where $2/p + \lambda/n = 2,$ and the optimal constant is
$$ C_{n, \lambda, p} = \pi^{\lambda/2}\frac{\Gamma((n - \lambda)/2)}{\Gamma(n - \lambda/2)}
\left(\frac{\Gamma(n)}{\Gamma(n/2)}\right) ^{1 - \lambda/n}$$
with explicit optimizers. This holds true for $0 < \lambda < n$ in general, and the 
reflection positivity will give it for $n = 1, 2$ and for $n - 2 \leq \lambda < n$ for
$n \geq 3.$ The argument uses the well-known conformal invariance of $I_{\lambda}$
and the observation that, in the indicated range,
$$I_{\lambda}[\Theta_H f, f] \geq 0$$
for $f \in L^p$ with support in a closed half-space determined by a hyperplane $H$; here
$\Theta_H$ denotes the reflection in this hyperplane. The conformal invariance means,
that one may also consider reflection in spheres (where the action then also contains a
factor of a suitable power of the Jacobian) and that there is a similar inequality for
reflections in spheres; it also means, that using stereographic projection (which is
conformal) the HLS inequality also holds on the $n$-sphere, and here the optimizer
is simply the constant function (and its images under the conformal group).

Now the argument goes roughly as follows: For an $L^p$-function $f(x)$, let $f^i(x)$ be equal
to $f(x)$ on one side of a hyperplane (or inside a ball) and even with respect to the
reflection $\Theta_H$ (or the ball reflection); similarly let $f^o(x)$ equal $f(x)$ on
the other side of the hyperplane (or outside the ball) and even. Then
$$\frac{1}{2}\left(I_{\lambda}[f^i] + I_{\lambda}[f^o]\right) \geq I_{\lambda}[f]$$
and the inequality is strict unless $f$ is even (with respect to the reflection in question).

Then an additional result about finite, non-negative measures, invariant under suitably
many reflections in hyperplanes and spheres, says that these are absolutely continuous
with respect to Lebesgue measure, and the density is $(1 + |x|^2)^{-n}$ (or translates).

Assume now that $f$ is an optimizer in HLS, and that $f^i$ and $f^o$ both have the same
$L^p$-norm as $f$; hence $f$ is even, and the measure $f^p\,dx$ satisfies the assumptions
about invariant measures -- leading to the desired form of the optimizer. Some additional
arguments are needed in the case of $\lambda = n-2$, but it is remarkable how this
proof of Frank and Lieb is using reflection positivity in a simple way.

\begin{Rem}
We remark that for $G=\SO(n+1,1)$ the Hermitian form $I_\lambda[\cdot,\cdot]$ is precisely the complementary series inner product in the non-compact realization of the principal series representation $\pi_\lambda$ on $\overline{N}\simeq\R^n$. The optimizer of the Hardy--Littlewood--Sobolev inequality is the $K$-spherical vector of $\pi_\lambda$. We further note that the complementary series representation $\pi_\lambda$ extends to a representation on $L^p(\R^n)$ by isometric operators.
\end{Rem}

\end{document}